\documentclass[reqno,10pt]{amsart}
\usepackage{amsmath, amssymb, amsthm,amsfonts}
\usepackage[english]{babel}
\usepackage{calligra}
\usepackage{graphicx}
\usepackage{url}
\usepackage{epstopdf}
\usepackage[a4paper,bindingoffset=0.5cm,left=2cm,right=2cm,top=2.5cm,bottom=2cm,footskip=.8cm]{geometry}

\usepackage{tikz}
\usetikzlibrary{arrows, automata,positioning,calc,shapes,decorations.pathreplacing,decorations.markings,shapes.misc,petri,topaths}
\usepackage{tkz-berge}
  \usepackage{pgfplots}
  \pgfplotsset{compat=newest}
  \usetikzlibrary{plotmarks}
  \usepackage{grffile}
\newlength\figureheight
  \newlength\figurewidth
  \setlength{\parindent}{0mm}
  \setlength{\parskip}{2mm}
\setlength\figureheight{3.5cm} \setlength\figurewidth{5cm}
\pgfplotsset{%
    tick label style={font=\scriptsize},
    label style={font=\footnotesize},
    legend style={font=\footnotesize},
         every axis plot/.append style={very thick}
}
\usetikzlibrary{math,shapes.geometric}

\usepackage{rotating}
\usepackage{amsbsy,enumerate}
\usepackage{graphicx}
\usepackage{ccaption}
\usepackage{comment}
\usepackage{mathrsfs}
\usepackage{diagbox}
\usepackage{mathtools}

\newcommand{\vb}{\vspace{3.2mm}}
\renewcommand{\hat}{\widehat}

\newcommand{\Var }{\mathbb{V}\textnormal{\textrm{ar}}}




\allowdisplaybreaks

\newtheorem{lemma}{Lemma}
\newtheorem{corollary}{Corollary}
\newtheorem{assumption}{Assumption}
\newtheorem{claim}{Claim}
\newtheorem{theorem}{Theorem}
\newtheorem{remark}{Remark}
\newtheorem{example}{Example}
\newtheorem{definition}{Definition}
\newtheorem{proposition}{Proposition}

\makeatletter
\renewcommand{\fnum@figure}[1]{\textbf{\figurename~\thefigure}. }
\renewcommand{\fnum@table}[1]{\textbf{\tablename~\thetable}. }
\makeatother

\begin{document}

\title[Large deviations for acyclic networks]{Large deviations for acyclic networks \\of queues with correlated Gaussian inputs}

\author{Martin Zubeldia \and Michel Mandjes}

\maketitle

\begin{abstract}
We consider an acyclic network of single-server queues with heterogeneous processing rates. It is assumed that each queue is fed by the superposition of a large number of i.i.d.\ Gaussian processes with stationary increments and positive drifts, which can be correlated across different queues. The flow of work departing from each server is split deterministically and routed to its neighbors according to a fixed routing matrix, with a fraction of it leaving the network altogether.

\noindent
We study the exponential decay rate of the probability that the steady-state queue length at any given node in the network is above any fixed threshold, also referred to as the `overflow probability'. In particular, we first leverage Schilder's sample-path large deviations theorem to obtain a general lower bound for the limit of this exponential decay rate, as the number of Gaussian processes goes to infinity. Then, we show that this lower bound is tight under additional technical conditions. Finally, we show that if the input processes to the different queues are non-negatively correlated, non short-range dependent fractional Brownian motions, and if the processing rates are large enough, then the asymptotic exponential decay rates of the queues coincide with the ones of isolated queues with appropriate Gaussian inputs.

\vb

\noindent
{\sc Keywords.} Gaussian processes $\circ$ acyclic networks $\circ$ large deviations

\vb

\noindent
{\sc Affiliations.} The authors are with the Korteweg-de Vries Institute for Mathematics, University of Amsterdam, Science Park 904, 1098 XH Amsterdam, the Netherlands; their research is partly funded by the NWO Gravitation project N{\sc etworks}, grant number 024.002.003.
MZ ({\it email}: {\tt \scriptsize m.zubeldia.suarez@tue.nl}) is also with the Department of Mathematics and Computer Science, Eindhoven University of Technology, Eindhoven, the Netherlands.
MM ({\it email}: {\tt \scriptsize m.r.h.mandjes@uva.nl}) is also with E{\sc urandom}, Eindhoven University of Technology, Eindhoven, the Netherlands, and the Amsterdam Business School, Faculty of Economics and Business, University of Amsterdam, Amsterdam, the Netherlands.

\vb

\noindent
{\sc Acknowledgments.}
Helpful comments by Sem Borst are greatly appreciated. Date: {\it \today.}

\end{abstract}

\newpage

\setcounter{tocdepth}{2}
\tableofcontents

\newpage

\section{Introduction}
Modern communication networks are complex, and handle huge amounts of data. This is especially true closer to the backbone of the networks, where large numbers of connections share the same resources. The design and operation of these networks greatly benefits from tractable theoretical models that are able to describe and predict the performance of the system. In order to obtain such tractable models, a common practice is to represent the network's nodes as single server queues with an appropriate service discipline. Moreover, given the high level of traffic aggregation, it is appealing to approximate the incoming traffic to the network by Gaussian processes \cite{KN, MvdM}. Since these networks are often operated in a regime where the packet loss probabilities are very small, there is a need for understanding the large-deviations behavior of these networks.

While a queueing network with Gaussian inputs is a rather streamlined model, the analysis of its large-deviations behavior is notoriously difficult outside the case of an isolated queue, which has been thoroughly studied \cite{BotvichDuffield,CourcoubetisWeber,singleQueue,DebickiMandjes}. The main reason for this is that, after the (initially Gaussian) incoming traffic goes through the first queue, it is no longer Gaussian. Then, when it is fed to a different queue, the analysis of this queue is significantly harder. For the special case of two queues in tandem, with work arriving only to the first queue and all the departing work of the first queue going into the second one, a useful trick involving subtracting the first queue (which has Gaussian input) from the sum of both queues (which behaves exactly as a single-server queue with a Gaussian input) yields a tractable analysis of the second queue in the tandem \cite{MichelTandemPaper}, even if it does not have a Gaussian input; see also the more refined approach in \cite{MMN} based on the delicate busy-period analysis developed in \cite{MMNvU}. However, this trick does not work for more complex networks (not even for two queues in tandem with inputs to \emph{both} queues, or when not all departures from the first queue join the second one \cite{tandemKavita}). Another factor that further complicates the analysis of complex networks is the fact that the input processes to the different queues can be correlated. This becomes a problem when the output of queues with correlated inputs are merged into another queue.

In this paper we consider acyclic networks of single-server queues, where work arrives to the queues as (possibly correlated) Gaussian processes, and where the work departing from each queue is deterministically split among its neighbors, with a fraction of it leaving the system altogether. This deterministic split of the departing work was also considered in e.g.\ \cite{GG1Networks}, and it is particularly suitable for modeling single-class networks (where all work is essentially exchangeable), or for modelling networks where all work needs to be routed to the same node (and thus where the splitting of departure streams is only performed to load balance the network).

In terms of our approach, this paper fits in the framework of the analysis of a single Gaussian queue \cite{singleQueue}, and the subsequent analysis of tandem, priority, and generalized processor sharing queues \cite{MichelTandemPaper,GPSQueues}; we refer to \cite{MichelBook} for a textbook account on Gaussian queues. In terms of our scope, this paper is perhaps most similar to \cite{GG1Networks}, where the authors obtained large-deviations results for acyclic networks of G/G/1 queues. However, in that paper there were certain limitations regarding the correlation structure of the input processes (in that they have to be independent across different queues), and regarding the structure of the network (in that any two directed paths cannot meet in more than one node).

\subsection{Our contribution}
In this paper we generalize the analysis of a pair of queues in tandem, fed by a single Gaussian process \cite{MichelTandemPaper}, to acyclic networks of single server queues, fed by (possibly correlated) Gaussian processes. As in \cite{MichelTandemPaper}, we assume that the arrival processes are the superposition of $n$ i.i.d.\ (multi-dimensional) Gaussian processes, and scale the processing rates of the servers by a factor of $n$, which corresponds to the so called `many sources regime'. In this regime, for any given node $i$, we work toward characterizing the asymptotic exponential decay rate of its `overflow probability', that is, the limit
\begin{equation}
 - \lim\limits_{n\to\infty} \frac{1}{n}\log \mathbb{P}\left(Q^{(n)}_i > nb\right) ,
\end{equation}
where $Q^{(n)}_i$ is the steady-state queue length at the $i$-th node, and $b$ is any positive threshold. In particular:
\begin{itemize}
 \item [(i)] We obtain a general lower bound on the asymptotic exponential decay rate  by leveraging the power of a generalized version of Schilder's theorem (Theorem \ref{thm:acyclicUpperLD}).
 \item [(ii)] Under additional technical conditions, we prove the tightness of the lower bound by finding the most likely sample paths, and showing that the corresponding asymptotic exponential decay rates coincide with the lower bound (Theorems \ref{thm:acyclicLowerLD1}, \ref{thm:acyclicLowerLD2}, and \ref{thm:acyclicLowerLD3}).
 \item [(iii)] We show that, if the input processes to the different queues are non-negatively correlated, non short-range dependent fractional Brownian motions, and if the processing rates are large enough, then the asymptotic exponential decay rates of the queues coincide with the ones of isolated queues with appropriate Gaussian inputs (Theorem \ref{thm:mfBmExample}).
\end{itemize}

\subsection{Organization of the paper}
The paper is organized as follows. In Section \ref{sec:model} we introduce some notation, the network model, and a few preliminaries on large-deviations theory. In Section \ref{sec:main} we present our main results. In Section \ref{sec:examples} we introduce an interesting example where the large-deviations behavior of any queue in the network coincides with the behavior of a single-server queue with Gaussian input. Finally, we conclude in Section \ref{sec:conclusion}.

\section{Model and preliminaries}\label{sec:model}

In this section we introduce some notation, the queueing network model that we analyze, and present a few preliminaries on sample-path large deviations theory.

\subsection{Notation for underlying graph}
Given a directed graph $G=(V,E)$, and a node $i\in V$, we introduce the following notation. Let
\[ \mathcal{N}_{\rm in}(i) \triangleq \big\{j\in V : (j,i)\in E \big\} \]
be the set of all inbound neighbors of $i$. Let
\[ \mathcal{P}_m(i) \triangleq \bigcup_{l=m}^{|V|} \Big\{ r \in V^l : r_l=i, \text{ and } (r_\ell,r_{\ell+1})\in E,\,\, \forall\, \ell \leq l-1 \Big\} \]
be the set of all directed paths that contain at least $m$ nodes, and end at node $i$. In particular, note that the trivial path $(i)$ is only in $\mathcal{P}_1(i)$. For any path $r\in\mathcal{P}_2(i)$, let $r_+\in\mathcal{P}_1(i)$ be the path that results from removing the node $r_1$ from the path $r$. Finally, for any path $r\in\mathcal{P}_1(i)$, let $|r|$ be the number of nodes that it contains.

\subsection{Queueing network}
In this subsection we introduce the basic structure of our queueing network. Consider a directed acyclic graph with $k$ nodes, and a scaling parameter $n\in\mathbb{Z}_+$. Each node $i$ of the graph is equipped with a single server with rate $n\mu_i$, and a queue with infinite capacity. Work arrives to the network in a number of stochastic processes, $A^{(n)}_1(\cdot),\dots,A^{(n)}_k(\cdot)$, with stationary increments and positive rates $n\lambda_1,\dots,n\lambda_k$, respectively (more details about these processes are given in Section~\ref{sec:gaussian_sources}). In particular, $A^{(n)}_i(\cdot)$ is the stream of work that enters the network at node $i$. Work departing from node $i$ is split deterministically so that, for each edge $(i,j)$ with $i\neq j$, a fraction $p_{i,j}\in [0,1]$ is routed to node $j$. The remaining fraction of the work departing from node $i$, denoted by $p_{i,i}\in[0,1]$, leaves the network; evidently, $\sum_{i} p_{i,j} = 1$. In order to simplify notation, for any directed path $r$, let us denote
\[ \Pi_r \triangleq \prod\limits_{\ell=1}^{|r|-1}  p_{r_\ell,r_{\ell+1}}. \]
In particular, we have $\Pi_{(i)}=1$.

For $s\leq t$, we interpret \[A^{(n)}_i(s,t)\triangleq A^{(n)}_i(t)-A^{(n)}_i(s)\] as the amount of exogenous work that arrived to the $i$-th node during the time interval $(s,t]$. Let $D^{(n)}_i(s,t)$ be the amount of work that departed the $i$-th node during $(s,t]$. Then, the total amount of work arriving to the $i$-th node during $(s,t]$ is
\begin{equation}\label{eq:defI}
 I^{(n)}_i(s,t) \triangleq A^{(n)}_i(s,t) + \sum\limits_{j\in\mathcal{N}_{\rm in}(i)} p_{j,i} D^{(n)}_j(s,t),
\end{equation}
recalling that $\mathcal{N}_{\rm in}(i)$ is the set of inbound neighbors of $i$. Furthermore, for $t\in\mathbb{R}$, Reich's formula states that the amount of remaining work in the $i$-th queue at time $t$ (also called `queue length') is given by
\begin{equation}\label{eq:defQ}
  Q^{(n)}_i(t)\triangleq  \sup\limits_{s< t} \left\{ I^{(n)}_i(s,t) - n\mu_i (t-s) \right\}.
\end{equation}
Moreover, we evidently have
\begin{align}\label{eq:defD}
 D^{(n)}_i(s,t) &= Q^{(n)}_i(s) + I^{(n)}_i(s,t) - Q^{(n)}_i(t).
\end{align}

Since we are interested in the steady-state of the queue lengths, we need to ensure that the service rate of each server is strictly larger than the total arrival rate to its node. This is enforced by imposing the following assumption.

\begin{assumption}\label{ass:basic}
For each $i\in\{1,\dots,k\}$, we have
\[ \sum\limits_{r\in\mathcal{P}_1(i)} \lambda_{r_1} \Pi_r < \mu_i. \]
\end{assumption}

Note that, even under Assumption \ref{ass:basic}, the existence and uniqueness of $k$-dimensional processes $D^{(n)}(\cdot)$, $I^{(n)}(\cdot)$, and $Q^{(n)}(\cdot)$ that satisfy equations \eqref{eq:defI}, \eqref{eq:defQ}, and \eqref{eq:defD} is not immediate. This will be established in Section \ref{sec:functional}, by expressing them as functionals of the exogenous arrival processes $A^{(n)}_1(\cdot),\dots,A^{(n)}_k(\cdot)$.

\subsection{Gaussian arrival processes}\label{sec:gaussian_sources}
In this subsection, we specify the nature of the exogenous arrivals to the network. Let $\{X^{(j)}(\cdot)\}_{j\in{\mathbb Z}_+}$ be a sequence of i.i.d.\ $k$-dimensional Gaussian processes with continuous sample paths and stationary increments,
and with $X^{(j)}(0)=(0,\dots,0)$, for all $j\in{\mathbb Z}_+$. Each one of these $k$-dimensional processes is characterized by its drift vector $\lambda=(\lambda_1,\dots,\lambda_k)$, where
\[ \lambda \triangleq \mathbb{E}\left[X^{(1)}(1)\right], \]
and by its covariance matrix $\Sigma:\mathbb{R}^2\to\mathbb{R}^{k\times k}$, where
\[ \Sigma_{i,j}(t,s) = {\mathbb C}{\rm ov}\left( X_i^{(1)}(t),\, X_j^{(1)}(s) \right).  \]
Throughout this paper, we assume that the process $A^{(n)}(\cdot)\triangleq\big( A^{(n)}_1(\cdot),\dots,A^{(n)}_k(\cdot) \big)$ is a $k$-dimensional Gaussian process such that
\begin{equation}\label{eq:superposition}
  A^{(n)}_i(\cdot) = \sum\limits_{j=1}^n X_i^{(j)}(\cdot),
\end{equation}
for all $i\in\{1,\dots,k\}$. Therefore, $A^{(n)}(\cdot)$ also has continuous sample paths and stationary increments, and satisfies $A^{(n)}(0)=(0,\dots,0)$. Moreover, the $k$-variate process $A^{(n)}(\cdot)$ has drift vector $n\lambda$, and covariance matrix $n\Sigma$.

\begin{remark} {\em
Equation \eqref{eq:superposition} corresponds to the setting where the arrival processes are a superposition of individual streams, which is also called the ``many-sources regime" \cite{manySources}.
}\end{remark}

Finally, the following assumption is in place. It is required for a generalized version of Schilder's theorem to hold, which is introduced in the following subsection.

\begin{assumption}\label{ass:acyclic}
$ $
\begin{itemize}
\item [(i)] The covariance matrix $\Sigma$ is differentiable.
\item [(ii)] For every $i,j\in\{1,\dots,k\}$, we have
\[ \lim\limits_{t^2+s^2\to\infty} \frac{\Sigma_{i,j}(t,s)}{t^2+s^2} = 0.  \]
\end{itemize}
\end{assumption}

\subsection{Sample-path large deviations}
In this paper, our aim is to study the limit
\begin{equation}\label{eq:basicApproximation}
 I_i(b) \triangleq - \lim\limits_{n\to\infty} \frac{1}{n}\log \mathbb{P}\left(Q^{(n)}_i > nb\right) ,
\end{equation}
where $Q^{(n)}_i$ is the steady-state queue length of the $i$-th node, and $I_i:\mathbb{R}_+\to\mathbb{R}_+^k$ is a function that only depends on the server rates $\mu\triangleq (\mu_1,\dots,\mu_k)$, on the drift vector $\lambda$, and on the covariance matrix $\Sigma$. In order to do this, we rely on a sample-path large deviations principle for centered Gaussian processes, based on the generalized Schilder's theorem. Before stating this theorem, we introduce its framework.

First, we introduce the sample-path space
\[ \Omega^k \triangleq \left\{ \omega:\mathbb{R}\to\mathbb{R}^k, \,\,\text{continuous},\,\, \omega(0)=(0,\dots,0),\,\, \lim\limits_{t\to\infty} \frac{\|\omega(t)\|_2}{1+|t|}=\lim\limits_{t\to-\infty} \frac{\|\omega(t)\|_2}{1+|t|}=0 \right\}, \]
equipped with the norm
\[ \|\omega \|_{\Omega^k} \triangleq \sup\left\{ \frac{\|\omega(t)\|_2}{1+|t|} : t\in\mathbb{R} \right\}, \]
which is a separable Banach space \cite{MannersaloNorros2001}. Next, we introduce the Reproducing Kernel Hilbert Space ({\sc rkhs}) $\mathcal{R}^k\subset\Omega^k$ (see \cite{AdlerBook}  for more details) induced by using the covariance matrix $\Sigma(\cdot,\cdot)$ as the kernel. In order to define it, we start from the smaller space
\[ \mathcal{R}^k_* \triangleq \text{span}\left\{ \Sigma(t, \cdot).v : t\in\mathbb{R},\, v\in \mathbb{R}^k \right\}, \]
with the inner product $\langle \cdot, \cdot \rangle_{\mathcal{R}^k}$ defined as
\[ \big\langle \Sigma(t,\cdot)u,\, \Sigma(s,\cdot)v \big\rangle_{\mathcal{R}^k} \triangleq u^\top\Sigma(t,s)v, \]
for all $t,s\in \mathbb{R}$ and $u,v\in\mathbb{R}^k$. The closure of $\mathcal{R}^k_*$ with respect to the topology induced by its inner product is the {\sc rkhs} $\mathcal{R}^k$. Using this inner product and its corresponding norm $\|\cdot\|_{\mathcal{R}^k}$, we define a rate function by
\[
\mathbb{I}(\omega) \triangleq
\begin{cases}
\frac{1}{2}\|\omega\|^2_{\mathcal{R}^k}, & \text{if } \omega\in\mathcal{R}^k,\\
\infty, & \text{otherwise}.
\end{cases}
\]

\begin{remark} {\em
In \cite{MannersaloNorros2001,GPSQueues}, the authors defined an appropriate multi-dimensional {\sc rkhs} as the product of single-dimensional spaces that use the individual variance functions as kernels. There this could be done because the different coordinates of the multi-dimensional Gaussian process of interest were assumed independent. In our case, since the coordinates of our Gaussian process of interest need not be independent, we needed to define the multi-dimensional space directly, using the whole covariance matrix as the kernel. When the coordinates are indeed independent, both definitions are equivalent.
}\end{remark}

Under the framework define above, the following sample-path large deviations principle holds.

\begin{theorem}[Generalized Schilder \cite{schilder}]\label{thm:Schilder}
Under Assumption \ref{ass:acyclic}, the following hold.
\begin{itemize}
  \item [(i)] For any closed set $F\subset\Omega^k$,
  \[ \limsup\limits_{n\to\infty} \frac{1}{n}\log  \mathbb{P}\left( \frac{A^{(n)}(\cdot)-n\lambda \,\cdot\,}{n} \in F \right) \leq -\inf\limits_{\omega\in F} \big\{ \mathbb{I}(\omega) \big\}.  \]
  \item [(ii)] For any open set $G\subset\Omega^k$,
  \[ \liminf\limits_{n\to\infty} \frac{1}{n}\log  \mathbb{P}\left( \frac{A^{(n)}(\cdot) -n\lambda \,\cdot\,}{n} \in G \right)   \geq -\inf\limits_{\omega\in G} \big\{ \mathbb{I}(\omega) \big\}.  \]
\end{itemize}
\end{theorem}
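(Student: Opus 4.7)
The plan is to adapt the classical Dawson--Gärtner route to sample-path large deviations for Gaussian processes. I would reduce the infinite-dimensional LDP on $\Omega^k$ to a finite-dimensional LDP together with exponential tightness in the Banach space $(\Omega^k,\|\cdot\|_{\Omega^k})$, and finally identify the resulting rate function with $\frac{1}{2}\|\omega\|_{\mathcal{R}^k}^2$ via the reproducing property of $\Sigma$.

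For the finite-dimensional step, fix times $t_1,\ldots,t_m\in\mathbb{R}$ and set
\[
Y^{(n)}_{t_1,\ldots,t_m} \triangleq \frac{1}{n}\bigl(A^{(n)}(t_1)-n\lambda t_1,\ldots,A^{(n)}(t_m)-n\lambda t_m\bigr).
\]
By the superposition \eqref{eq:superposition}, $Y^{(n)}_{t_1,\ldots,t_m}$ is the empirical mean of $n$ i.i.d.\ centered Gaussian vectors in $\mathbb{R}^{km}$ with covariance $\Sigma^{(m)}$ assembled from the blocks $\Sigma(t_p,t_q)$. Cramér's theorem for Gaussian vectors then gives an LDP on $\mathbb{R}^{km}$ with rate function $J^{(m)}(y)=\tfrac12\,y^\top(\Sigma^{(m)})^{+}y$, where $(\Sigma^{(m)})^{+}$ is the Moore--Penrose pseudoinverse, extended to $+\infty$ off the range of $\Sigma^{(m)}$. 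Dawson--Gärtner's theorem then yields an LDP on $\Omega^k$ in the projective (cylindrical) topology, with rate function $\sup_{t_1,\ldots,t_m} J^{(m)}(\omega(t_1),\ldots,\omega(t_m))$. This supremum equals $\tfrac12\|\omega\|_{\mathcal{R}^k}^2$ by the standard {\sc rkhs} characterization: the evaluation maps $\omega\mapsto v^\top\omega(t)$ are bounded linear functionals on $\mathcal{R}^k$ represented by $\Sigma(t,\cdot)v$, so the supremum of quadratic forms over finite marginals of $\omega\in\mathcal{R}^k$ reproduces $\tfrac12\|\omega\|_{\mathcal{R}^k}^2$, and is infinite otherwise.

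The crux of the proof, and the step I expect to be the main obstacle, is exponential tightness in $(\Omega^k,\|\cdot\|_{\Omega^k})$: for every $L>0$ I must exhibit a compact $K_L\subset\Omega^k$ with
\[
\limsup_{n\to\infty}\frac{1}{n}\log \mathbb{P}\!\left(\tilde A^{(n)}\notin K_L\right)\le -L,
\]
where $\tilde A^{(n)}\triangleq (A^{(n)}-n\lambda\,\cdot\,)/n$. By a variant of Arzelà--Ascoli adapted to the weighted norm $\|\cdot\|_{\Omega^k}$, a set is relatively compact precisely when its elements are equicontinuous on bounded intervals and $\|\omega(t)\|_2/(1+|t|)$ decays uniformly as $|t|\to\infty$. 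I would encode both properties in $K_L$ through dyadic control of local oscillations and of $\sup_{|t|\in[2^j,2^{j+1}]}\|\tilde A^{(n)}(t)\|_2$, and then bound the probability of exiting $K_L$ using a Borell--TIS or Fernique-type inequality applied to the centered Gaussian process $\tilde A^{(n)}$. Differentiability of $\Sigma$ from Assumption \ref{ass:acyclic}(i) controls the modulus of continuity, while Assumption \ref{ass:acyclic}(ii) forces $\sqrt{\Sigma_{i,i}(t,t)}=o(|t|)$, which supplies the tail decay at infinity. The delicate point is the peeling over dyadic annuli of $|t|$: the Gaussian concentration constants coming from each scale must be summed without destroying the target exponent $-L$, which requires using the strict $o(t^2)$-decay in Assumption \ref{ass:acyclic}(ii) to extract enough sub-Gaussian slack to offset the geometric growth of the number of scales.
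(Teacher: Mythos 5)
The paper does not prove this theorem at all: it is stated with an attribution to the reference \cite{schilder} and invoked as a known result, so there is no ``paper's own proof'' to compare against line by line. Your reconstruction follows the standard route for generalized Schilder theorems for Gaussian processes on weighted path spaces, and the plan is sound. The decomposition into (a) finite-dimensional Cram\'er, (b) Dawson--G\"artner projective limit, (c) identification of the rate function with $\tfrac12\|\cdot\|_{\mathcal{R}^k}^2$ via the reproducing property, and (d) exponential tightness in the $\|\cdot\|_{\Omega^k}$-topology to upgrade from the cylindrical topology is precisely the strategy in the literature the paper leans on (cf.\ the development in Mannersalo--Norros and Deuschel--Stroock). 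You correctly identify exponential tightness as the genuine difficulty, and your reading of Assumption \ref{ass:acyclic} is right: part (ii) gives $\sqrt{\Sigma_{ii}(t,t)}=o(|t|)$, which is exactly what the dyadic peeling over annuli $|t|\in[2^j,2^{j+1}]$ needs so that the Borell--TIS exponents at scale $j$ grow and the union bound closes, while part (i) supplies the local modulus control for equicontinuity on compacts. Two minor points worth making explicit if you were to write this out in full: first, you should check that $\tilde A^{(n)}\in\Omega^k$ almost surely (the decay $\|\tilde A^{(n)}(t)\|_2/(1+|t|)\to 0$ a.s.\ is not automatic; it follows from Assumption \ref{ass:acyclic}(ii) plus Borel--Cantelli along the same dyadic scales), since otherwise the LDP ``on $\Omega^k$'' is not well posed; second, when passing from the weak (cylindrical) LDP to the norm LDP via exponential tightness, one should cite or reprove the standard upgrading lemma (the rate function is automatically lower semicontinuous in the finer topology as an increasing supremum of continuous quadratic forms, so goodness in $\Omega^k$ does come for free once exponential tightness is in hand). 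Since the paper itself delegates this theorem to a reference, supplying the full proof is more than what the paper's argument requires, but your outline is the correct one.
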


Schilder's theorem typically only gives implicit results, as it is often hard to explicitly compute the infimum over the set of sample paths. However, as in \cite{MichelTandemPaper,GPSQueues,MannersaloNorros2001}, we will leverage the properties of our {\sc rkhs} to obtain explicit results.

\section{Main results}\label{sec:main}
In this section we will establish large-deviations results for the steady-state queue-length distributions. In particular, we will use Theorem \ref{thm:Schilder} to show that, for any $\{1,\dots,k\}$, and for every $b>0$, the limit
\begin{equation}\label{eq:objective}
 -\lim\limits_{n\to\infty} \frac{1}{n}\log \mathbb{P}\left(Q^{(n)}_i > nb\right)  \end{equation}
exists, and to find (tight) bounds for it. The first step is to express this probability as a function of the Gaussian arrival processes (Section \ref{sec:functional}), and to show that the limit exists (Section \ref{sec:existence}). Second, we obtain a general upper bound for this limit (Section \ref{sec:lowerBound}), and prove that it is tight under additional technical assumptions (Section~\ref{sec:upperBound}). The arguments largely follow the same structure as the arguments for the analysis of the second queue in a tandem \cite{MichelTandemPaper}, but without the simplifications that come from having only two queues in tandem, with arrivals only to the first one.

\subsection{Overflow probability as a function of the arrival processes}\label{sec:functional}
In this subsection we obtain a set $\mathcal{E}^i(b)$ of sample paths such that
\[ \mathbb{P}\left(Q^{(n)}_i > nb\right) = \mathbb{P}\left(\frac{A^{(n)}(\cdot)-n\lambda \,\cdot\,}{n} \in \mathcal{E}^i(b)\right). \]
By Reich's formula, we have
\begin{align*}
  \mathbb{P}\left(Q^{(n)}_i > nb\right) &= \mathbb{P}\left(\sup\limits_{t< 0} \left\{ I^{(n)}_i(t,0) + n\mu_i t \right\} > nb\right) = \mathbb{P}\left(\exists\, t< 0 : I^{(n)}_i(t,0) + n\mu_i t > nb\right),
\end{align*}
where $I^{(n)}_i(t,0)$ is the total amount of work that arrived to the $i$-th queue in the time interval $(t,0]$. If $i$ is a node with no inbound neighbors, i.e., if $\mathcal{N}_{\rm in}(i)=\emptyset$, we have that $I^{(n)}_i(t,0)=-A^{(n)}_i(t)$, and thus
\begin{align*}
  \mathbb{P}\left(Q^{(n)}_i > nb\right) &= \mathbb{P}\left(\exists\, t< 0 : n\mu_i t -A^{(n)}_i(t) > nb\right) \\
  &= \mathbb{P}\left( \frac{A^{(n)}(\cdot)-n\lambda \,\cdot\,}{n} \in \Big\{f\in\Omega^k : \exists\, t< 0,\, (\mu_i-\lambda_i) t - f_i(t) > b \Big\}\right).
\end{align*}
In this case, a large-deviations analysis can be performed through a straightforward application of Schilder's theorem (this is exactly the same as in the case of an isolated Gaussian queue \cite{singleQueue}). However, in general the input process is the sum of the local Gaussian arrival process, and the departure processes of its inbound neighbors, which are not Gaussian. In the following lemma we obtain the input process as a functional of the exogenous arrival processes of all the upstream nodes.

\begin{lemma}\label{lem:inputFormula}
For each $i\in\{1,\dots,k\}$, and for all $t<0$, we have
\begin{align}
 I^{(n)}_i(t,0) = A^{(n)}_i(t,0) +&  \sup\limits_{{\boldsymbol t}\in\mathcal{T}_i(t)} \left\{ \sum\limits_{r\in\mathcal{P}_2(i)}  \left[ A^{(n)}_{r_1}({\boldsymbol t}_{r},0)+ n\mu_{r_1}({\boldsymbol t}_{r}-{\boldsymbol t}_{r_+}) \right] \Pi_r \right\} \label{eq:inputProcess} \\
-& \sup\limits_{{\boldsymbol s}\in\mathcal{T}_i(0)} \left\{ \sum\limits_{r\in\mathcal{P}_2(i)} \left[ A^{(n)}_{r_1}({\boldsymbol s}_{r},0)+ n\mu_{r_1}({\boldsymbol s}_{r}-{\boldsymbol s}_{r_+}) \right] \Pi_r \right\}, \nonumber
\end{align}
where
\begin{align*}
 \mathcal{T}_i(t) &\triangleq \Big\{ {\boldsymbol t}\in\mathbb{R}^{\mathcal{P}_1(i)}: {\boldsymbol t}_i = t  \quad \text{and} \quad {\boldsymbol t}_r < {\boldsymbol t}_{r_+}, \,\, \forall\, r\in\mathcal{P}_2(i) \Big\}.
\end{align*}
\end{lemma}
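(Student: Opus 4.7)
The plan is to prove the identity by induction on the depth of node $i$, defined as the maximum number of edges in any directed path ending at $i$. Acyclicity ensures that every $j \in \mathcal{N}_{\rm in}(i)$ has strictly smaller depth, so the induction is well-founded. For the base case $\mathcal{N}_{\rm in}(i) = \emptyset$, the set $\mathcal{P}_2(i)$ is empty and $\mathcal{T}_i(t)$ reduces to the single point $\boldsymbol{t}_{(i)} = t$; both supremums collapse to $0$ and the identity reduces to $I^{(n)}_i(t,0) = A^{(n)}_i(t,0)$, which is precisely \eqref{eq:defI} in this case.

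For the inductive step, I would start from the recursive definition
\[
 I^{(n)}_i(t,0) = A^{(n)}_i(t,0) + \sum_{j\in\mathcal{N}_{\rm in}(i)} p_{j,i}\bigl[ Q^{(n)}_j(t) + I^{(n)}_j(t,0) - Q^{(n)}_j(0) \bigr],
\]
obtained by combining \eqref{eq:defI} and \eqref{eq:defD}. Using the additivity $I^{(n)}_j(u,\tau) = I^{(n)}_j(u,0) - I^{(n)}_j(\tau,0)$ and Reich's formula \eqref{eq:defQ}, I would introduce
\[
 \phi_j(\tau) \triangleq \sup_{u<\tau}\bigl\{ I^{(n)}_j(u,0) - n\mu_j(\tau-u)\bigr\},
\]
so that $\phi_j(0) = Q^{(n)}_j(0)$ and $\phi_j(\tau) = Q^{(n)}_j(\tau) + I^{(n)}_j(\tau,0)$ for $\tau<0$; consequently $p_{j,i} D^{(n)}_j(t,0) = p_{j,i}\bigl[\phi_j(t) - \phi_j(0)\bigr]$. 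Substituting the inductive hypothesis into $I^{(n)}_j(u,0)$, the subtracted supremum over $\mathcal{T}_j(0)$ is constant in $u$ and factors out of $\sup_{u<\tau}$, while the remaining inner supremum over $\mathcal{T}_j(u)$ merges with $\sup_{u<\tau}$ into a single supremum over $\bigl\{\boldsymbol{t}^{(j)} : \boldsymbol{t}^{(j)}_j < \tau,\ \boldsymbol{t}^{(j)}_{r'} < \boldsymbol{t}^{(j)}_{r'_+}\ \forall r' \in \mathcal{P}_2(j)\bigr\}$, upon identifying the free variable $u$ with the coordinate $\boldsymbol{t}^{(j)}_j$. This yields $\phi_j(\tau) = \phi_j^+(\tau) - \phi_j^-$, where $\phi_j^-$ is independent of $\tau$, so the $\phi_j^-$ contributions cancel in $\phi_j(t) - \phi_j(0)$.

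The last step is to reassemble the per-neighbor contributions $\sum_j p_{j,i}\bigl[\phi_j^+(t) - \phi_j^+(0)\bigr]$ into the single pair of supremums over $\mathcal{T}_i(t)$ and $\mathcal{T}_i(0)$ that appear in the statement. I would invoke the natural bijection between $\mathcal{P}_2(i)$ and the disjoint union $\bigsqcup_{j \in \mathcal{N}_{\rm in}(i)} \mathcal{P}_1(j)$, sending $(j, r')$ to $r = (r'_1,\dots,j,i)$. Under this bijection one has $r_1 = r'_1$, $\Pi_r = p_{j,i}\Pi_{r'}$, and $r_+ = (r'_+, i)$ when $|r'| \geq 2$, or $r_+ = (i)$ when $r' = (j)$. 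Relabeling $\boldsymbol{t}_{(r', i)}$ as $\boldsymbol{u}^{(j)}_{r'}$ converts the constraint $\boldsymbol{t}_{(j,i)} < \boldsymbol{t}_{(i)} = t$ into $\boldsymbol{u}^{(j)}_j < t$, while the ordering conditions on the longer paths match term-by-term. Because the $\boldsymbol{u}^{(j)}$'s for different $j$ vary independently, the joint supremum over $\mathcal{T}_i(t)$ splits as $\sum_j p_{j,i}\phi_j^+(t)$, and analogously at $0$, completing the induction. The main obstacle is the careful bookkeeping of path operations and indices --- in particular the treatment of the trivial path $(j) \in \mathcal{P}_1(j)$, which corresponds to the edge $(j,i) \in \mathcal{P}_2(i)$ and supplies the interaction with the anchor time $\boldsymbol{t}_{(i)} = t$.
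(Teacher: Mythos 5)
Your proposal is correct and follows essentially the same path as the paper: induction on the depth of $i$, rewriting $p_{j,i}D^{(n)}_j(t,0)$ via Reich's formula as a difference of suprema of $I^{(n)}_j(u,0)-n\mu_j(\tau-u)$ (your $\phi_j(t)-\phi_j(0)$, which the paper computes but does not name), cancelling the $\tau$-independent inner supremum from the inductive hypothesis, and merging the outer $\sup_{u<\tau}$ with the inner $\sup_{\mathcal{T}_j(u)}$ before reindexing via the bijection $\mathcal{P}_2(i)\cong\bigsqcup_{j\in\mathcal{N}_{\rm in}(i)}\mathcal{P}_1(j)$. The introduction of $\phi_j$ is a mild notational cleanup that makes the cancellation more transparent, but the decomposition and the combinatorial reassembly are the same as in Appendix A.
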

The proof is given in Appendix \ref{app:proofInputFormula}, and consists of solving a recursive equation on the input processes by using induction on the maximum length of paths that end in node $i$.

\begin{remark} {\em \label{rem:busyPeriods}
Let ${\boldsymbol t}^*$ and ${\boldsymbol s}^*$ be finite optimizers of the two suprema in  \eqref{eq:inputProcess} over the closure of their domains. These have the following interpretation: for each path $r\in\mathcal{P}_2(i)$, the time ${\boldsymbol t}^*_{r}$ (respectively, ${\boldsymbol s}^*_{r}$) is the starting point of the busy period of the $r_1$-th queue that contains the time ${\boldsymbol t}^*_{r_+}$ (respectively, ${\boldsymbol s}^*_{r_+}$). Then, since ${\boldsymbol t}_i=t <0$ and ${\boldsymbol s}_i=0$, it follows that ${\boldsymbol t}^*_{r}\leq {\boldsymbol s}^*_{r}$, for all $r\in\mathcal{P}_1(i)$. Combining this with  \eqref{eq:inputProcess}, and using the continuity of $A^{(n)}(\cdot)$, we obtain
\begin{align*}
  I^{(n)}_i(t,0) = A^{(n)}_i(t,0) &- n\left( \sum\limits_{j\in\mathcal{N}_{\rm in}(i)} \mu_j p_{j,i} \right) t \\
  &+ \sup\limits_{{\boldsymbol t}\in\mathcal{T}_i}\left\{ \sum\limits_{r\in\mathcal{P}_2(i)}  \left[ A^{(n)}_{r_1}({\boldsymbol t}_{r},0)+ n \left(\mu_{r_1} - \sum\limits_{j\in\mathcal{N}_{\rm in}(r_1)}  \mu_j p_{j,r_1} \right){\boldsymbol t}_{r} \right] \Pi_r  \right. \\
 &  \left.- \sup\limits_{{\boldsymbol s}\in\mathcal{S}_i({\boldsymbol t})} \left\{ \sum\limits_{r\in\mathcal{P}_2(i)} \left[ A^{(n)}_{r_1}({\boldsymbol s}_{r},0)+ n\left(\mu_{r_1} - \sum\limits_{j\in\mathcal{N}_{\rm in}(r_1)}  \mu_j p_{j,r_1} \right) {\boldsymbol s}_{r} \right] \Pi_r \right\} \right\},
\end{align*}
where
\begin{align*}
 \mathcal{T}_i &\triangleq \Big\{ {\boldsymbol t}\in\mathbb{R}^{\mathcal{P}_1(i)}: {\boldsymbol t}_i < 0  \quad \text{and} \quad {\boldsymbol t}_r < {\boldsymbol t}_{r_+}, \,\, \forall\, r\in\mathcal{P}_2(i) \Big\},\\
 \mathcal{S}_i({\boldsymbol t}) &\triangleq \Big\{ {\boldsymbol s}\in\mathbb{R}^{\mathcal{P}_1(i)}: {\boldsymbol s}_i=0  \quad \text{and} \quad {\boldsymbol t}_{r} < {\boldsymbol s}_r < {\boldsymbol s}_{r_+}, \,\, \forall\, r\in\mathcal{P}_2(i) \Big\}.
\end{align*}
Note that the continuity of $A^{(n)}(\cdot)$ is what allows us to have the condition ${\boldsymbol t}_{r} < {\boldsymbol s}_{r}$ instead of ${\boldsymbol t}_{r} \leq {\boldsymbol s}_{r}$. This distinction will be convenient later.
}\end{remark}

We now state the main result of this subsection.

\begin{theorem}\label{thm:probAsFunctOfInputs}
  For each $i\in\{1,\dots,k\}$, and for every $b>0$, we have
  \begin{align}\label{repr_p}
 \mathbb{P}\left( Q_i^{(n)} > b n \right) &= \mathbb{P}\left( \frac{A^{(n)}(\cdot)-n\lambda\, \cdot\,}{n} \in \mathcal{E}_i(b) \right),
\end{align}
where
\begin{align*}
 &\mathcal{E}_i(b) \triangleq \left\{ f\in\Omega^k: \exists\, {\boldsymbol t}\in\mathcal{T}_i : \forall\, {\boldsymbol s}\in\mathcal{S}_i({\boldsymbol t}),\,\, f_i({\boldsymbol t}_i) + \sum\limits_{r\in\mathcal{P}_2(i)} \Big[f_{r_1}({\boldsymbol t}_{r}) - f_{r_1}({\boldsymbol s}_{r})\Big] \Pi_r \right. \\
 &\qquad\qquad\qquad\qquad\qquad\qquad\qquad \left. > b - \sum\limits_{r\in\mathcal{P}_1(i)} \left[ \left(\mu_{r_1}-\lambda_{r_1} - \sum\limits_{j\in\mathcal{N}_{\rm in}(r_1)} \mu_j p_{j,r_1}\right) \big({\boldsymbol t}_{r}-{\boldsymbol s}_{r}\big)\right] \Pi_r \right\}.
\end{align*}
\end{theorem}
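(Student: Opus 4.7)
\emph{Proof plan.} The plan is to rewrite the event $\{Q_i^{(n)}>nb\}$ step by step until it takes the form $\{(A^{(n)}-n\lambda\,\cdot\,)/n\in\mathcal{E}_i(b)\}$. First, by Reich's formula this event equals $\{\sup_{t<0}[I_i^{(n)}(t,0)+n\mu_i t]>nb\}$. I would then substitute the expression for $I_i^{(n)}(t,0)$ from Lemma~\ref{lem:inputFormula} and absorb the outer supremum over $t<0$ into the inner supremum over $\mathcal{T}_i(t)$, by treating $t$ as the coordinate $\boldsymbol t_i$ of an enlarged supremum over $\mathcal{T}_i$. After this merger the event reads
\[ \sup_{\boldsymbol t\in\mathcal{T}_i}U(\boldsymbol t)-\sup_{\boldsymbol s\in\mathcal{T}_i(0)}V(\boldsymbol s)>nb, \]
with $U$ and $V$ the sums appearing in Remark~\ref{rem:busyPeriods}. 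During this merger the drift coefficients telescope: the Reich term $n\mu_i t$ combines with the $-n\mu_{r_1}\boldsymbol t_{r_+}\Pi_r$ factors arising from path extensions through every in-neighbor to produce the consolidated coefficient $n(\mu_{r_1}-\sum_{j\in\mathcal{N}_{\rm in}(r_1)}\mu_j p_{j,r_1})\Pi_r$ on each $\boldsymbol t_r$, for $r\in\mathcal{P}_1(i)$.

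Second, I would turn the sup-minus-sup form into an $\exists\,\forall$ statement via the equivalence $\sup_{\boldsymbol t}A(\boldsymbol t)-\sup_{\boldsymbol s}B(\boldsymbol s)>c\iff\exists\boldsymbol t:\forall\boldsymbol s,\,A(\boldsymbol t)-B(\boldsymbol s)>c$, and strengthen the inner domain from $\mathcal{T}_i(0)$ to $\mathcal{S}_i(\boldsymbol t)$ by adding the constraint $\boldsymbol t_r<\boldsymbol s_r$. As explained in Remark~\ref{rem:busyPeriods}, this does not change the value of the sup, since its optimizers already satisfy $\boldsymbol t_r^{*}\le\boldsymbol s_r^{*}$ (by the busy-period interpretation) and the strict inequality is permitted by the continuity of $A^{(n)}(\cdot)$; the bonus is that $\mathcal{E}_i(b)$ becomes open in $\Omega^k$, a property needed when invoking the lower bound of Schilder's theorem later on.

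Third, I would plug in $A_{r_1}^{(n)}(\tau,0)=-nf_{r_1}(\tau)-n\lambda_{r_1}\tau$ for $f\triangleq(A^{(n)}-n\lambda\,\cdot\,)/n$ and divide by $n$. The $\lambda$-drift merges with the coefficient obtained in the telescoping step, giving exactly $(\mu_{r_1}-\lambda_{r_1}-\sum_{j\in\mathcal{N}_{\rm in}(r_1)}\mu_j p_{j,r_1})(\boldsymbol t_r-\boldsymbol s_r)\Pi_r$ on each path $r\in\mathcal{P}_1(i)$, while the centered increments assemble into $-\bigl(f_i(\boldsymbol t_i)+\sum_{r\in\mathcal{P}_2(i)}[f_{r_1}(\boldsymbol t_r)-f_{r_1}(\boldsymbol s_r)]\Pi_r\bigr)$, where $f_i(0)=0$ is used to fold the trivial path into the first term. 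The leftover overall minus on the $f$-side is then removed by invoking the distributional symmetry $(A^{(n)}-n\lambda\,\cdot\,)/n\overset{d}{=}-(A^{(n)}-n\lambda\,\cdot\,)/n$ of the centered Gaussian process, which allows substituting $f$ by $-f$ inside the probability and yields exactly the set $\mathcal{E}_i(b)$ of the theorem.

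The main obstacle I anticipate is the bookkeeping in the first step: each coordinate $\boldsymbol t_r$ (for $r\in\mathcal{P}_1(i)$) collects a positive contribution as the endpoint of path $r$, and a negative contribution $-n\mu_j p_{j,r_1}\Pi_r$ for every $j\in\mathcal{N}_{\rm in}(r_1)$, coming from the path obtained by prepending $j$ to $r$ (in which $\boldsymbol t_r$ plays the role of $\boldsymbol t_{q_+}$). Verifying that all these contributions telescope to the claimed consolidated coefficient---and that at the root $r=(i)$ the Reich term $n\mu_i t$ supplies exactly the missing endpoint contribution---is the delicate combinatorial check; once it is in place, the remaining steps are routine substitution together with the Gaussian symmetry trick.
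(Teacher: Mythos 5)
Your proposal is correct and follows essentially the same route as the paper's own proof: Reich's formula, substitution of Lemma~\ref{lem:inputFormula}, merging the outer supremum into $\mathcal{T}_i$, restriction of the inner domain to $\mathcal{S}_i(\boldsymbol t)$ via Remark~\ref{rem:busyPeriods}, the $\exists\forall$ rewriting, centering by $n\lambda\,\cdot$, and finally the sign flip by Gaussian symmetry. The telescoping of the drift coefficients that you spell out (each $\boldsymbol t_r$, $r\in\mathcal P_1(i)$, collecting $-\mu_{r_1}\Pi_r$ as endpoint and $+\mu_j p_{j,r_1}\Pi_r$ from every one-step prepended path, with $\boldsymbol s_i=0$ supplying the root contribution) is exactly the bookkeeping behind the paper's closing ``rearranging terms'' step, and it checks out.
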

The proof follows immediately from Reich's formula and Lemma \ref{lem:inputFormula}, and it is given in Appendix \ref{app:probAsFunctOfInputs}.

\subsection{Decay rate of the overflow probability}\label{sec:existence}
In this subsection we establish the existence of the limit
\begin{equation*}
 -\lim\limits_{n\to\infty} \frac{1}{n}\log \mathbb{P}\left( Q_i^{(n)} > b n \right) ,
\end{equation*}
for all $b>0$. Recall that Theorem \ref{thm:probAsFunctOfInputs} states that $\mathbb{P}( Q_i^{(n)} > b n)$ satisfies \eqref{repr_p}, where $\mathcal{E}^i(b)$ is an open set of the path space $\Omega^k$. Then, by Schilder's theorem (Theorem \ref{thm:Schilder}), we have
\begin{align*}
  -\liminf\limits_{n\to\infty} \frac{1}{n}\log\left( \mathbb{P}\left( \frac{A^{(n)}(\cdot)-n\lambda \,\cdot\,}{n} \in \mathcal{E}^i(b) \right) \right) &\leq \inf\limits_{f\in \mathcal{E}^i(b)} \big\{ \mathbb{I}(f) \big\},
\end{align*}
and
\begin{align*}
  -\limsup\limits_{n\to\infty} \frac{1}{n}\log\left( \mathbb{P}\left( \frac{A^{(n)}(\cdot)-n\lambda \,\cdot\,}{n} \in \overline{\mathcal{E}^i(b)} \right) \right) &\geq \inf\limits_{f\in \overline{\mathcal{E}^i(b)}} \big\{ \mathbb{I}(f) \big\}.
\end{align*}
Then, the existence of the limit is equivalent to showing that $\mathcal{E}^i(b)$ is an $\mathbb{I}$-continuity set, which is stated in the following proposition. The proof follows the lines of the proof of  \cite[Thm.\ 3.1]{MichelTandemPaper}, and it is thus omitted.

\begin{proposition}\label{thm:Icontinuity}
For each $i\in\{1,\dots,k\}$, and for every $b>0$, we have
\begin{equation}\label{eq:Icontinuity}
 -\lim\limits_{n\to\infty} \frac{1}{n}\log\left( \mathbb{P}\left( Q_i^{(n)} > b n \right) \right) = \inf\limits_{f\in \overline{\mathcal{E}_i(b)}} \big\{ \mathbb{I}(f) \big\} = \inf\limits_{f\in \mathcal{E}_i(b)} \big\{ \mathbb{I}(f) \big\}.
\end{equation}
\end{proposition}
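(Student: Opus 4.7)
The plan is to combine Schilder's theorem with the representation from Theorem \ref{thm:probAsFunctOfInputs}, and then reduce the claim to showing that $\mathcal{E}_i(b)$ is an $\mathbb{I}$-continuity set, i.e.\ that $\inf_{\overline{\mathcal{E}_i(b)}}\mathbb{I} = \inf_{\mathcal{E}_i(b)}\mathbb{I}$.

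First I would verify that $\mathcal{E}_i(b)$ is open in $\Omega^k$: evaluation at any fixed time $t$ is continuous (since $\|\cdot\|_{\Omega^k}$ controls $\|\omega(t)\|_2$ pointwise), and $\mathcal{E}_i(b)$ is defined by strict inequalities among finitely many such evaluations for each candidate $\mathbf{t}$. Applying Theorem \ref{thm:Schilder}(ii) to $\mathcal{E}_i(b)$ and Theorem \ref{thm:Schilder}(i) to $\overline{\mathcal{E}_i(b)}$, together with the representation \eqref{repr_p}, yields the sandwich
\[
 \inf_{f\in\overline{\mathcal{E}_i(b)}}\mathbb{I}(f) \;\leq\; -\limsup_{n\to\infty}\tfrac{1}{n}\log \mathbb{P}\bigl(Q_i^{(n)}>nb\bigr) \;\leq\; -\liminf_{n\to\infty}\tfrac{1}{n}\log \mathbb{P}\bigl(Q_i^{(n)}>nb\bigr) \;\leq\; \inf_{f\in\mathcal{E}_i(b)}\mathbb{I}(f).
\]
Since the trivial inequality $\inf_{\overline{\mathcal{E}_i(b)}}\mathbb{I}\leq \inf_{\mathcal{E}_i(b)}\mathbb{I}$ is immediate from set inclusion, the theorem reduces to establishing the reverse inequality.

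Second, I would prove this reverse inequality by an approximation argument: for any $f^\star\in\overline{\mathcal{E}_i(b)}$ with $\mathbb{I}(f^\star)<\infty$ (so that $f^\star\in\mathcal{R}^k$), I would construct a family $\{f_\epsilon\}_{\epsilon>0}\subset\mathcal{E}_i(b)$ with $\mathbb{I}(f_\epsilon)\to\mathbb{I}(f^\star)$. Following the template of \cite[Thm.~3.1]{MichelTandemPaper}, I would fix a witness $\mathbf{t}^\star$ in the closure of $\mathcal{T}_i$ at which the defining inequality of $\mathcal{E}_i(b)$ holds weakly for every $\mathbf{s}\in\mathcal{S}_i(\mathbf{t}^\star)$, and then set $f_\epsilon \triangleq f^\star+\epsilon h$ for a suitable $h\in\mathcal{R}^k$. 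The path $h$ would be chosen as a finite linear combination of kernel sections $\Sigma(\mathbf{t}^\star_r,\cdot)u_r$ for $r\in\mathcal{P}_1(i)$, with the vectors $u_r\in\mathbb{R}^k$ tuned so that
\[
 h_i(\mathbf{t}^\star_i) + \sum_{r\in\mathcal{P}_2(i)}\bigl[h_{r_1}(\mathbf{t}^\star_r) - h_{r_1}(\mathbf{s}_r)\bigr]\Pi_r \;\geq\; c \;>\; 0
\]
uniformly in $\mathbf{s}\in\mathcal{S}_i(\mathbf{t}^\star)$. Then $f_\epsilon\in\mathcal{E}_i(b)$ for every $\epsilon>0$, and the Hilbert-space identity
\[
 \mathbb{I}(f^\star+\epsilon h) \;=\; \mathbb{I}(f^\star) + \epsilon\,\langle f^\star,h\rangle_{\mathcal{R}^k} + \tfrac{\epsilon^2}{2}\|h\|_{\mathcal{R}^k}^2
\]
gives $\mathbb{I}(f_\epsilon)\to\mathbb{I}(f^\star)$ as $\epsilon\to 0$, closing the sandwich.

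The main obstacle I anticipate is the universal quantifier over the unbounded index set $\mathcal{S}_i(\mathbf{t}^\star)$: the perturbation $h$ must be engineered to produce a uniform positive gain on the defining inequality as $\mathbf{s}$ ranges over its full domain. This is where the stability condition of Assumption \ref{ass:basic} and the sublinear decay built into $\Omega^k$ work together: the terms $(\mathbf{t}^\star_r-\mathbf{s}_r)\Pi_r$ appearing on the right-hand side of the defining inequality are paired with strictly positive coefficients (consequences of Assumption \ref{ass:basic}), so the right-hand side dominates the (sublinearly-growing) left-hand side outside a bounded region of $\mathbf{s}$. Only on this bounded region must the required uniform positivity of the perturbation be engineered, which can be arranged by choosing the $u_r$ via a continuity-and-compactness argument on kernel sections $\Sigma(\mathbf{t}^\star_r,\cdot)$. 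A secondary technical point is that the optimizing $\mathbf{t}^\star$ may lie on the boundary of $\mathcal{T}_i$ (some consecutive coordinates coinciding), which is handled by an additional $\mathcal{O}(\epsilon)$ adjustment of $\mathbf{t}^\star$ into the interior, at negligible cost to $\mathbb{I}$.
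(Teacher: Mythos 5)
Your two-step plan---sandwich via Schilder together with an approximation argument showing $\inf_{\overline{\mathcal{E}_i(b)}}\mathbb{I}=\inf_{\mathcal{E}_i(b)}\mathbb{I}$---is exactly the route the paper takes; the paper does not spell it out but explicitly defers to the proof of \cite[Thm.~3.1]{MichelTandemPaper}, whose structure is a sandwich plus a perturbation argument of the kind you describe (move the witness $\bs{t}^\star$ slightly into the interior, then add $\epsilon h$ with $h$ a finite linear combination of kernel sections, and use the Hilbert-space identity to pass to the limit). So the approach is sound and matches the paper's intent.

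A few details in your narrative are off, though none of them fatal. First, for a \emph{fixed} witness $\bs{t}^\star$, the set $\mathcal{S}_i(\bs{t}^\star)$ is bounded, not unbounded: the constraints $\bs{s}_i=0$ and $\bs{t}^\star_r<\bs{s}_r<\bs{s}_{r_+}$ sandwich every coordinate $\bs{s}_r$ in $(\bs{t}^\star_r,0)$. So the obstacle you identify as the ``main'' one does not arise; the compactness needed to get a uniform margin is available for free, and the domination argument balancing sublinear growth against linear drift is unnecessary. Second, the coefficients $\mu_{r_1}-\lambda_{r_1}-\sum_{j\in\mathcal{N}_{\rm in}(r_1)}\mu_j p_{j,r_1}$ appearing in $c_i$ are \emph{not} guaranteed positive by Assumption~\ref{ass:basic}, which only bounds $\sum_{r}\lambda_{r_1}\Pi_r$ by $\mu_i$; the upstream terms carry $\mu_j$, not $\lambda_j$. (This is also visible in Theorem~\ref{thm:mfBmExample}, where positivity of exactly these quantities is imposed as an extra hypothesis.) Since $\mathcal{S}_i(\bs{t}^\star)$ is bounded this sign question is moot for the present proof, but it means the mechanism you invoke is built on a false premise. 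Third, your one-line justification of openness of $\mathcal{E}_i(b)$ is too quick: for a fixed $\bs{t}$ the defining condition is a universal quantifier over the continuum $\mathcal{S}_i(\bs{t})$, so it is an infinite (not finite) intersection of open sets and openness of the union over $\bs{t}$ needs a genuine argument. None of these issues changes the verdict that your proposal follows essentially the same path as the paper's reference, but a careful write-up should locate where the real technical work is.
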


Since the existence of the decay rate of interest given in  \eqref{eq:basicApproximation} has been established now, in the following subsections we focus on finding lower and upper bounds on it.

\subsection{Lower bound on the decay rate}\label{sec:lowerBound}
In this subsection we present a general lower bound for the asymptotic exponential decay rate of the overflow probability in steady state. We start by introducing some notation. Given a vector $v$ and a scalar $a$, we denote $v-(a,\dots,a)$ as $v-a$. For each node $i\in\{1,\dots,k\}$, we denote
\begin{align*}
\hat{A}_i(t) &\triangleq \frac{A_i^{(n)}(t) - n\lambda_i t}{\sqrt{n}}.
\end{align*}
Note that $\hat A(\cdot)$ is a $k$-dimensional Gaussian process with zero mean, and covariance matrix $\Sigma$. For each node $i\in\{1,\dots,k\}$,
\begin{align*} \overline \lambda_i &\triangleq \sum\limits_{r\in\mathcal{P}_1(i)} \lambda_{r_1} \Pi_r, \\
\bar A_i({\boldsymbol s},{\boldsymbol t}) &\triangleq \sum\limits_{r\in\mathcal{P}_1(i)} \Big[ \hat A_{r_1}({\boldsymbol t}_{r}) - \hat A_{r_1}({\boldsymbol s}_{r}) \Big] \Pi_r. \end{align*}
Moreover, let us define the functions
\begin{align*}
 k_b^i({\boldsymbol t},{\boldsymbol s}) &\triangleq \mathbb{E}\left[\left. \bar A_i({\boldsymbol t}-{\boldsymbol t}_i,{\boldsymbol s}) \,\right|\, \bar A_i({\boldsymbol t}-{\boldsymbol t}_i,{\boldsymbol t}) =b - \left(\mu_i-\overline\lambda_i\right) {\boldsymbol t}_i \right],\\
  h_b^i({\boldsymbol t},{\boldsymbol s}) &\triangleq \mathbb{E}\left[ \bar A_i({\boldsymbol t}-{\boldsymbol t}_i,{\boldsymbol s}) \,\left|\, \bar A_i({\boldsymbol s},{\boldsymbol t}) = b - \left(\mu_i-\overline\lambda_i\right) {\boldsymbol t}_i - c_i({\boldsymbol t},{\boldsymbol s}) \right.\right],
\end{align*}
where
\begin{align*}
  c_i({\boldsymbol t},{\boldsymbol s}) & \triangleq \left( \overline{\lambda}_i - \lambda_i - \sum\limits_{j\in\mathcal{N}_{\rm in}(i)} \mu_j p_{j,i} \right){\boldsymbol t}_i + \sum\limits_{r\in\mathcal{P}_2(i)} \left(\mu_{r_1}-\lambda_{r_1} - \sum\limits_{j\in\mathcal{N}_{\rm in}(r_1)} \mu_j p_{j,r_1} \right)\big({\boldsymbol t}_{r}-{\boldsymbol s}_{r}\big) \Pi_r.
\end{align*}
Note that $c_i({\boldsymbol t},{\boldsymbol t}-{\boldsymbol t}_i)=0$.

Using the above notation, we now state our lower bound.

\begin{theorem}\label{thm:acyclicUpperLD}
  Under Assumptions \ref{ass:basic} and \ref{ass:acyclic}, for each $i\in\{1,\dots,k\}$ and for every $b > 0$,
\begin{equation*}
   -\lim\limits_{n\to\infty} \frac{1}{n}\log  \mathbb{P}\left( Q_i^{(n)} > b n  \right) \geq \inf\limits_{{\boldsymbol t}\in\mathcal{T}_i} \sup\limits_{{\boldsymbol s}\in\mathcal{S}_i({\boldsymbol t})} \Big\{ I^i_b({\boldsymbol t},{\boldsymbol s}) \Big\},
  \end{equation*}
where
\begin{equation}\label{eq:acyclicExponent}
I^i_b({\boldsymbol t},{\boldsymbol s}) \triangleq
\begin{cases}
\frac{\Big[ b - \left(\mu_i-\overline\lambda_i\right) {\boldsymbol t}_i \Big]^2}{2\,\Var \Big( \bar A_i({\boldsymbol t}-{\boldsymbol t}_i,{\boldsymbol t}) \Big)}, & \begin{array}{l}
\text{if } k_b^i({\boldsymbol t},{\boldsymbol s}) < c_i({\boldsymbol t},{\boldsymbol s}), \\
\begin{array}{l}\text{or} \end{array}\quad {\boldsymbol s}={\boldsymbol t}-{\boldsymbol t}_i,\end{array} \\
\frac{\Big[b - \left(\mu_i-\overline\lambda_i\right) {\boldsymbol t}_i - c_i({\boldsymbol t},{\boldsymbol s}) \Big]^2}{2\,\Var \Big( \bar A_i({\boldsymbol s},{\boldsymbol t}) \Big)}, & \text{if } h_b^i({\boldsymbol t},{\boldsymbol s})> c_i({\boldsymbol t},{\boldsymbol s}), \\
\frac{\Big[ b - \left(\mu_i-\overline\lambda_i\right) {\boldsymbol t}_i \Big]^2}{2\,\Var \Big( \bar A_i({\boldsymbol t}-{\boldsymbol t}_i,{\boldsymbol t}) \Big)} + \frac{\Big[ k_b^i({\boldsymbol t},{\boldsymbol s})- c_i({\boldsymbol t},{\boldsymbol s}) \Big]^2}{2\,\Var \Big( \bar A_i({\boldsymbol s},{\boldsymbol t}) \,\Big|\, \bar A_i({\boldsymbol t}-{\boldsymbol t}_i,{\boldsymbol t}) = b - \left(\mu_i-\overline\lambda_i\right) {\boldsymbol t}_i \Big)}, & \begin{array}{l}\text{otherwise.}\end{array}
\end{cases}
\end{equation}
\end{theorem}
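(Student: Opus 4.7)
My plan is to combine Proposition \ref{thm:Icontinuity}, which identifies the decay rate with $\inf_{f\in\mathcal{E}_i(b)}\mathbb{I}(f)$, with a reduction of this infinite-dimensional variational problem to the infimum of the rate function over an intersection of two linear half-spaces in the RKHS $\mathcal{R}^k$; that infimum can then be written explicitly using the geometry of a bivariate centered Gaussian.

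First, decompose $\mathcal{E}_i(b)=\bigcup_{{\boldsymbol t}\in\mathcal{T}_i}\mathcal{E}_i(b,{\boldsymbol t})$, where $\mathcal{E}_i(b,{\boldsymbol t})$ is the set of paths satisfying the defining inequality of $\mathcal{E}_i(b)$ for every ${\boldsymbol s}\in\mathcal{S}_i({\boldsymbol t})$, so that $\inf_{f\in\mathcal{E}_i(b)}\mathbb{I}(f)=\inf_{{\boldsymbol t}\in\mathcal{T}_i}\inf_{f\in\mathcal{E}_i(b,{\boldsymbol t})}\mathbb{I}(f)$. To bound the inner infimum, I fix ${\boldsymbol t}\in\mathcal{T}_i$ and ${\boldsymbol s}\in\mathcal{S}_i({\boldsymbol t})$, and use the inclusion $\mathcal{E}_i(b,{\boldsymbol t})\subseteq\mathcal{E}_i(b,{\boldsymbol t},{\boldsymbol s})\cap\mathcal{E}_i(b,{\boldsymbol t},{\boldsymbol t}-{\boldsymbol t}_i)$, where each factor on the right relaxes the ``$\forall\,{\boldsymbol s}$'' in the defining condition to the single indicated value. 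Membership ${\boldsymbol t}-{\boldsymbol t}_i\in\mathcal{S}_i({\boldsymbol t})$ follows from ${\boldsymbol t}_i<0$ and ${\boldsymbol t}_r<{\boldsymbol t}_{r_+}$, and a short calculation using $\overline\lambda_i=\sum_{r\in\mathcal{P}_1(i)}\lambda_{r_1}\Pi_r$ together with the recursion $\Pi_r=p_{r_1,r_2}\Pi_{r_+}$ gives $c_i({\boldsymbol t},{\boldsymbol t}-{\boldsymbol t}_i)=0$, so the distinguished constraint simplifies to a single linear inequality $L_1(f)\geq\tilde b_1:=b-(\mu_i-\overline\lambda_i){\boldsymbol t}_i$.

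The resulting two-constraint variational problem is a convex quadratic program in $\mathcal{R}^k$: minimize $\tfrac12\|f\|_{\mathcal{R}^k}^2$ subject to $L_1(f)\geq\tilde b_1$ and $L_2(f)\geq\tilde b_2:=\tilde b_1-c_i({\boldsymbol t},{\boldsymbol s})$, with $L_1,L_2$ finite linear combinations of point evaluations, hence continuous on $\Omega^k$ with representers in $\mathcal{R}^k$. Under the centered Gaussian measure with covariance $\Sigma$, the pair $(Y,Z):=(L_1,L_2)$ evaluated on the canonical Gaussian process equals $(\bar A_i({\boldsymbol t}-{\boldsymbol t}_i,{\boldsymbol t}),\bar A_i({\boldsymbol s},{\boldsymbol t}))$, and the additive identity $Z=Y-W$ with $W:=\bar A_i({\boldsymbol t}-{\boldsymbol t}_i,{\boldsymbol s})$ translates the KKT activity thresholds into statements about $k_b^i({\boldsymbol t},{\boldsymbol s})=\mathbb{E}[W\mid Y=\tilde b_1]$ and $h_b^i({\boldsymbol t},{\boldsymbol s})=\mathbb{E}[W\mid Z=\tilde b_2]$. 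By the Riesz representation, the minimizer is a non-negative linear combination of the representers of $L_1$ and $L_2$, and three scenarios arise: only $L_1$ active, whose consistency reduces to $k_b^i\leq c_i$ and gives value $\tilde b_1^2/(2\Var(Y))$; only $L_2$ active, consistent iff $h_b^i\geq c_i$ and giving $\tilde b_2^2/(2\Var(Z))$; and both active, handled by the conditional-variance decomposition $\tilde b_1^2/(2\Var(Y))+(k_b^i-c_i)^2/(2\Var(Z\mid Y=\tilde b_1))$. These match exactly the three branches defining $I_b^i({\boldsymbol t},{\boldsymbol s})$ in \eqref{eq:acyclicExponent}; the degenerate ${\boldsymbol s}={\boldsymbol t}-{\boldsymbol t}_i$ collapses $L_1=L_2$ and $c_i=0$, reducing the problem to a single constraint and recovering the first branch.

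Combining the estimates gives $\inf_{f\in\mathcal{E}_i(b,{\boldsymbol t})}\mathbb{I}(f)\geq I_b^i({\boldsymbol t},{\boldsymbol s})$ for every ${\boldsymbol s}\in\mathcal{S}_i({\boldsymbol t})$, and hence $\geq\sup_{\boldsymbol s} I_b^i({\boldsymbol t},{\boldsymbol s})$; taking $\inf_{{\boldsymbol t}\in\mathcal{T}_i}$ concludes the proof. I expect the main technical obstacle to be the careful bookkeeping inside the quadratic program: writing the representers of the two functionals via the kernel $\Sigma$, translating each KKT activity condition into the precise inequality appearing in the statement, and matching the both-active case to the conditional-variance form in \eqref{eq:acyclicExponent}. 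Assumption \ref{ass:basic} ensures $\overline\lambda_i<\mu_i$ and hence $\tilde b_1>0$, so the quadratic program is always non-degenerate on the $L_1$ side, while Assumption \ref{ass:acyclic} supplies the regularity of $\Sigma$ needed for Schilder's theorem and for continuity of $L_1,L_2$ on $\Omega^k$.
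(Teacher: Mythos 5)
Your proposal follows essentially the same route as the paper: decompose $\mathcal{E}_i(b)$ over ${\boldsymbol t}\in\mathcal{T}_i$, bound each intersection $\bigcap_{{\boldsymbol s}\in\mathcal{S}_i({\boldsymbol t})}\mathcal{E}_{{\boldsymbol t},{\boldsymbol s}}$ from above by the two-constraint set $\mathcal{U}_{{\boldsymbol t},{\boldsymbol s}}$ (which is precisely your $\mathcal{E}_i(b,{\boldsymbol t},{\boldsymbol s})\cap\mathcal{E}_i(b,{\boldsymbol t},{\boldsymbol t}-{\boldsymbol t}_i)$, using $c_i({\boldsymbol t},{\boldsymbol t}-{\boldsymbol t}_i)=0$ and ${\boldsymbol t}-{\boldsymbol t}_i\in\mathcal{S}_i({\boldsymbol t})$), then take the supremum over ${\boldsymbol s}$, exactly as in the paper's Lemma \ref{lem:miniLowerBound}. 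The only cosmetic difference is that you phrase the finite-dimensional reduction as a KKT analysis of the quadratic program on RKHS representers, while the paper invokes Cram\'er's theorem for the bivariate Gaussian $(\bar A_i({\boldsymbol t}-{\boldsymbol t}_i,{\boldsymbol t}),\bar A_i({\boldsymbol s},{\boldsymbol t}))$ and explicitly solves the resulting quadratic minimization with Claim \ref{cla:oneImpossible} ruling out the both-slack case; these are the same computation, and your three activity cases map exactly onto the three branches of \eqref{eq:acyclicExponent} (up to boundary conventions $<$ versus $\leq$, which do not affect the values).
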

The proof is given in Appendix \ref{app:acyclicUpperLD}, and it essentially consists of two steps. First, we decompose the event $\mathcal{E}_i(b)$ given in Theorem \ref{thm:probAsFunctOfInputs} as a union of intersections of simpler events that only involve the sample paths at fixed times, and we upper bound the probability of the intersection by the probability of the least likely one. Then, we use Cram\'er's theorem to obtain the decay rate of the least likely of these simpler events by solving the additional quadratic optimization problem that arises by its application.

\begin{remark} {\em
  As part of the proof of Theorem \ref{thm:acyclicUpperLD}, it is established that conditions $k_b^i({\boldsymbol t},{\boldsymbol s}) < c_i({\boldsymbol t},{\boldsymbol s})$ or ${\boldsymbol s}={\boldsymbol t}-{\boldsymbol t}_i$, and $h_b^i({\boldsymbol t},{\boldsymbol s})>c_i({\boldsymbol t},{\boldsymbol s})$ cannot be satisfied at the same time. As a result, the three cases in the definition of $I^i_b({\boldsymbol t},{\boldsymbol s})$ are disjoint.
}\end{remark}

\begin{remark} {\em
The lower bound in Theorem \ref{thm:acyclicUpperLD} generalizes the lower bound given in \cite[Corollary 3.5]{MichelTandemPaper}, not only by generalizing the network structure from a set of tandem queues to any acyclic network of queues, but also by removing a concavity assumption on the square root of the variance of the input processes. However, the removal of this assumption makes the expression of the lower bound more convoluted, even if we restrict it to the case of a pair of queues in tandem.
}\end{remark}

\begin{remark} {\em
It is worth highlighting that, even if the bound of Theorem \ref{thm:acyclicUpperLD} is not tight, it provides an upper bound for the asymptotic exponential decay rate of overflow probability that can be used as a performance guarantee in applications.
}\end{remark}

\subsection{Tightness of the lower bound}\label{sec:upperBound}
In this subsection we obtain conditions under which the lower bound in Theorem \ref{thm:acyclicUpperLD} is tight. We present three results, one for each of the cases in the definition of $I^i_b({\boldsymbol t},{\boldsymbol s})$ in   \eqref{eq:acyclicExponent}, with different technical conditions for each case.

Let $({\boldsymbol t}^*,{\boldsymbol s}^*)$ be an optimizer of   \eqref{eq:acyclicExponent} over the closure of its domain. We first establish that, if the optimum of  \eqref{eq:acyclicExponent} is achieved in the first case, then the lower bound of Theorem \ref{thm:acyclicUpperLD} is tight under an additional technical condition. This is formalized in the following theorem.

\begin{theorem}\label{thm:acyclicLowerLD1}
  Under Assumptions \ref{ass:basic} and \ref{ass:acyclic}, the following holds. If
\begin{equation}\label{eq:case1allS}
 k_b^i\left({\boldsymbol t}^*,{\boldsymbol s}\right) < c_i\left({\boldsymbol t}^*,{\boldsymbol s}\right),
\end{equation}
for all ${\boldsymbol s}\in\mathcal{S}_i({\boldsymbol t}^*)$ such that ${\boldsymbol s} \neq {\boldsymbol t}^*- {\boldsymbol t}^*_i$, then
\begin{equation*}
   \lim\limits_{n\to\infty} \frac{1}{n}\log  \mathbb{P}\left( Q_i^{(n)}  > b n \right) = -\inf\limits_{{\boldsymbol t}\in\mathcal{T}_i} \left\{ \frac{\Big[ b - \left(\mu_i-\overline\lambda_i\right) {\boldsymbol t}_i \Big]^2}{2\,\Var \left( \bar A_i({\boldsymbol t}-{\boldsymbol t}_i,{\boldsymbol t}) \right)} \right\}.
\end{equation*}
\end{theorem}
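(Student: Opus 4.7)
The plan is to complement Theorem \ref{thm:acyclicUpperLD} with a matching Schilder lower bound on the probability. By Proposition \ref{thm:Icontinuity} the claim reduces to
\[
\inf_{f\in\mathcal{E}_i(b)}\mathbb{I}(f)\;=\;\inf_{{\boldsymbol t}\in\mathcal{T}_i}\frac{\big[b-(\mu_i-\overline\lambda_i){\boldsymbol t}_i\big]^2}{2\Var\big(\bar A_i({\boldsymbol t}-{\boldsymbol t}_i,{\boldsymbol t})\big)}.
\]
The $\geq$ direction follows from Theorem \ref{thm:acyclicUpperLD}: the choice ${\boldsymbol s}={\boldsymbol t}-{\boldsymbol t}_i$ places one in case 1 of \eqref{eq:acyclicExponent} for every ${\boldsymbol t}$, so $\sup_{\boldsymbol s} I^i_b({\boldsymbol t},{\boldsymbol s})$ dominates the case-1 expression at ${\boldsymbol t}$. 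For the $\leq$ direction I will exhibit a path in $\overline{\mathcal{E}_i(b)}$ at the inf-sup optimizer ${\boldsymbol t}^*$ whose rate function value is exactly the case-1 value at ${\boldsymbol t}^*$. Under hypothesis \eqref{eq:case1allS} every ${\boldsymbol s}\in\mathcal{S}_i({\boldsymbol t}^*)$ falls in case 1, so $\sup_{\boldsymbol s} I^i_b({\boldsymbol t}^*,{\boldsymbol s})$ equals the case-1 value at ${\boldsymbol t}^*$; sandwiching this between the two bounds forces all three infima to coincide, and in particular pins ${\boldsymbol t}^*$ as a realizer of the infimum on the right.

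The candidate path $f^\star$ is the minimum-norm element of $\mathcal{R}^k$ meeting the single scalar constraint
\[
\bar A_i({\boldsymbol t}^*-{\boldsymbol t}^*_i,{\boldsymbol t}^*)[f^\star] = b-(\mu_i-\overline\lambda_i){\boldsymbol t}^*_i,
\]
for which $\mathbb{I}(f^\star)$ equals the target case-1 value by construction. The standard {\sc rkhs} identity for such a projection is that, for any continuous linear functional $L$ on $\mathcal{R}^k$,
\[
L[f^\star] = \mathbb{E}\!\left[L \,\Big|\, \bar A_i({\boldsymbol t}^*-{\boldsymbol t}^*_i,{\boldsymbol t}^*) = b-(\mu_i-\overline\lambda_i){\boldsymbol t}^*_i\right].
\]
Specializing to $L=\bar A_i({\boldsymbol s},{\boldsymbol t}^*) = \bar A_i({\boldsymbol t}^*-{\boldsymbol t}^*_i,{\boldsymbol t}^*)-\bar A_i({\boldsymbol t}^*-{\boldsymbol t}^*_i,{\boldsymbol s})$ and invoking the definition of $k_b^i$ gives
\[
\bar A_i({\boldsymbol s},{\boldsymbol t}^*)[f^\star] = b-(\mu_i-\overline\lambda_i){\boldsymbol t}^*_i - k_b^i({\boldsymbol t}^*,{\boldsymbol s}),
\]
so by the characterization of $\mathcal{E}_i(b)$ in Theorem \ref{thm:probAsFunctOfInputs}, the inclusion $f^\star\in\overline{\mathcal{E}_i(b)}$ is equivalent to $k_b^i({\boldsymbol t}^*,{\boldsymbol s})\leq c_i({\boldsymbol t}^*,{\boldsymbol s})$ for every ${\boldsymbol s}\in\mathcal{S}_i({\boldsymbol t}^*)$. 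For ${\boldsymbol s}\neq{\boldsymbol t}^*-{\boldsymbol t}^*_i$ this is exactly the strict form assumed in \eqref{eq:case1allS}, while for ${\boldsymbol s}={\boldsymbol t}^*-{\boldsymbol t}^*_i$ a direct computation from the definitions gives $k_b^i=c_i=0$.

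To conclude, a small perturbation of $f^\star$—replacing $b$ by $b+\epsilon$ in the defining constraint—produces $f_\epsilon^\star$ lying strictly inside the open set $\mathcal{E}_i(b)$, with $\mathbb{I}(f_\epsilon^\star)\to\mathbb{I}(f^\star)$ as $\epsilon\downarrow 0$, giving the needed $\inf_{f\in\mathcal{E}_i(b)}\mathbb{I}(f)\leq\mathbb{I}(f^\star)$ via Schilder. The main obstacle I foresee is the uniform-in-${\boldsymbol s}$ control in this perturbation step: one must maintain $k_{b+\epsilon}^i({\boldsymbol t}^*,{\boldsymbol s})\leq c_i({\boldsymbol t}^*,{\boldsymbol s})+O(\epsilon)$ strictly across the non-compact set $\mathcal{S}_i({\boldsymbol t}^*)$, which calls for continuity of $k_b^i$ and $c_i$ in ${\boldsymbol s}$ together with a tail argument as $\|{\boldsymbol s}\|\to\infty$, leveraging Assumption \ref{ass:basic} to ensure the gap in \eqref{eq:case1allS} stays bounded away from zero at infinity.
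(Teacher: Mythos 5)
Your construction and argument mirror the paper's proof almost exactly: identify the most probable path $f^*$ in the majorizing set $\mathcal{U}_{{\boldsymbol t}^*,{\boldsymbol s}^*}$, compute $\mathbb{I}(f^*)$ (the paper writes $f^*$ as the conditional expectation; your ``minimum-norm projection under the scalar constraint'' is the same object via standard {\sc rkhs} duality), and then verify membership in the target event by evaluating the linear functionals at ${\boldsymbol t}={\boldsymbol t}^*$, which reduces the inequalities exactly to $k_b^i({\boldsymbol t}^*,{\boldsymbol s})\leq c_i({\boldsymbol t}^*,{\boldsymbol s})$ and is handled by hypothesis \eqref{eq:case1allS} together with $k_b^i({\boldsymbol t}^*,{\boldsymbol t}^*-{\boldsymbol t}^*_i)=c_i({\boldsymbol t}^*,{\boldsymbol t}^*-{\boldsymbol t}^*_i)=0$. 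So the core of your argument is correct and coincides with the paper's.

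The one place you diverge, and where you flag an ``obstacle,'' is the final step: you attempt to push $f^*$ from the boundary into the open set $\mathcal{E}_i(b)$ by perturbing $b\mapsto b+\epsilon$, and then you worry (correctly!) that this requires uniform-in-${\boldsymbol s}$ control over a non-compact set. But this whole perturbation step is unnecessary. You already cited Proposition \ref{thm:Icontinuity}, which states precisely that $\inf_{f\in\mathcal{E}_i(b)}\mathbb{I}(f)=\inf_{f\in\overline{\mathcal{E}_i(b)}}\mathbb{I}(f)$; this $\mathbb{I}$-continuity is exactly what lets the paper conclude directly from $f^*\in\overline{\mathcal{E}_i(b)}$ that $\inf_{f\in\mathcal{E}_i(b)}\mathbb{I}(f)\leq\mathbb{I}(f^*)$, with no perturbation and no tail estimate. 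In short, the ``obstacle'' you foresee is a non-issue you created by not fully cashing in the proposition you quoted at the outset; once you drop the $\epsilon$-argument and invoke Proposition \ref{thm:Icontinuity} directly, your proof matches the paper's.
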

The proof is given in Appendix \ref{app:acyclicLowerLD1}, and it essentially consists of two steps. First, we identify a most likely sample path in the {\it least} likely event of the intersection given in the decomposition of the event $\mathcal{E}_i(b)$ that was used in the proof of Theorem \ref{thm:acyclicUpperLD}. Then, we show that under the assumptions imposed this most likely sample path is in {\it all} the sets featuring in the intersection, thus implying optimality in
$\mathcal{E}_i(b)$.

Since the condition in  \eqref{eq:case1allS} requires an optimizer $t^*$ of   \eqref{eq:acyclicExponent}, it is generally hard to verify. In the following lemma we present a sufficient condition that is easier to verify.

\begin{lemma}\label{lem:sufficientCondition}
A sufficient condition for   \eqref{eq:case1allS} to hold is that
\begin{equation}\label{eq:case1simplified}
 k_b^i\left(\tilde {\boldsymbol t},{\boldsymbol s}\right) < c_i\left(\tilde {\boldsymbol t},{\boldsymbol s}\right),
\end{equation}
for all ${\boldsymbol s}\in \mathcal{S}_i\left(\tilde {\boldsymbol t}\right)$ such that ${\boldsymbol s}\neq \tilde {\boldsymbol t}- \tilde {\boldsymbol t}_i$, where
\[ \tilde {\boldsymbol t} \in \underset{{\boldsymbol t}\in\overline{\mathcal{T}_i}}{\arg\min}\left\{ \frac{\Big[ b - \left(\mu_i-\overline\lambda_i\right) {\boldsymbol t}_i \Big]^2}{2\,\Var \left( \bar A_i({\boldsymbol t}-{\boldsymbol t}_i,{\boldsymbol t})  \right)} \right\}. \]
\end{lemma}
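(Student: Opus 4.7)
The plan is to show that the candidate $\tilde{\boldsymbol t}$ introduced in the statement can itself be taken as an optimizer ${\boldsymbol t}^*$ of \eqref{eq:acyclicExponent}, after which the hypothesis becomes exactly condition \eqref{eq:case1allS}. To lighten notation, write
\[
J_1({\boldsymbol t}) \triangleq \frac{\big[ b - (\mu_i-\overline\lambda_i){\boldsymbol t}_i\big]^2}{2\,\Var\!\big(\bar A_i({\boldsymbol t}-{\boldsymbol t}_i,{\boldsymbol t})\big)}, \qquad J({\boldsymbol t}) \triangleq \sup_{{\boldsymbol s}\in\mathcal{S}_i({\boldsymbol t})} I^i_b({\boldsymbol t},{\boldsymbol s}),
\]
so that $\tilde{\boldsymbol t}$ is a minimizer of $J_1$ over $\overline{\mathcal{T}_i}$, while an optimizer ${\boldsymbol t}^*$ of \eqref{eq:acyclicExponent} is by definition a minimizer of $J$.

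The first step is to establish the pointwise inequality $J({\boldsymbol t})\ge J_1({\boldsymbol t})$ for every ${\boldsymbol t}\in\mathcal{T}_i$. This is done by producing the explicit feasible point ${\boldsymbol s}\triangleq {\boldsymbol t}-{\boldsymbol t}_i$: one checks that ${\boldsymbol s}_i=0$, that ${\boldsymbol t}_r<{\boldsymbol s}_r$ because ${\boldsymbol t}_i<0$, and that ${\boldsymbol s}_r<{\boldsymbol s}_{r_+}$ because ${\boldsymbol t}_r<{\boldsymbol t}_{r_+}$ for ${\boldsymbol t}\in\mathcal{T}_i$, so that ${\boldsymbol s}\in\mathcal{S}_i({\boldsymbol t})$. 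At this ${\boldsymbol s}$ the first branch in \eqref{eq:acyclicExponent} applies by definition, and yields $I^i_b({\boldsymbol t},{\boldsymbol t}-{\boldsymbol t}_i)=J_1({\boldsymbol t})$, whence $J({\boldsymbol t})\ge J_1({\boldsymbol t})$.

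The second step is to evaluate $J$ at the specific point $\tilde{\boldsymbol t}$. By the hypothesis \eqref{eq:case1simplified}, for every ${\boldsymbol s}\in\mathcal{S}_i(\tilde{\boldsymbol t})$ with ${\boldsymbol s}\neq \tilde{\boldsymbol t}-\tilde{\boldsymbol t}_i$ one has $k_b^i(\tilde{\boldsymbol t},{\boldsymbol s})<c_i(\tilde{\boldsymbol t},{\boldsymbol s})$, so the first branch of \eqref{eq:acyclicExponent} applies; and it also applies trivially at ${\boldsymbol s}=\tilde{\boldsymbol t}-\tilde{\boldsymbol t}_i$. Since the first-branch value $J_1(\tilde{\boldsymbol t})$ does not depend on ${\boldsymbol s}$, the supremum defining $J(\tilde{\boldsymbol t})$ collapses to $J(\tilde{\boldsymbol t})=J_1(\tilde{\boldsymbol t})$.

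The third step combines these two observations with the defining minimality of $\tilde{\boldsymbol t}$: for any ${\boldsymbol t}\in\overline{\mathcal{T}_i}$ one has $J({\boldsymbol t})\ge J_1({\boldsymbol t})\ge J_1(\tilde{\boldsymbol t})=J(\tilde{\boldsymbol t})$, which shows $\tilde{\boldsymbol t}$ is a minimizer of $J$, i.e.\ an admissible choice of ${\boldsymbol t}^*$. Substituting ${\boldsymbol t}^*=\tilde{\boldsymbol t}$ in \eqref{eq:case1allS} returns precisely \eqref{eq:case1simplified}, which is assumed. The main (mild) obstacle I anticipate is handling the closure: $\tilde{\boldsymbol t}$ is only known to lie in $\overline{\mathcal{T}_i}$, so the feasibility argument for ${\boldsymbol s}=\tilde{\boldsymbol t}-\tilde{\boldsymbol t}_i\in\mathcal{S}_i(\tilde{\boldsymbol t})$ and the applicability of the three-case formula on the boundary may need an approximation by interior points $\tilde{\boldsymbol t}^{(m)}\to\tilde{\boldsymbol t}$; this is routine since $J_1$ and the case distinctions are continuous in their arguments.
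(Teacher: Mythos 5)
Your proof is correct and follows essentially the same line as the paper's: establish the pointwise bound $\sup_{{\boldsymbol s}} I^i_b({\boldsymbol t},{\boldsymbol s})\ge J_1({\boldsymbol t})$ by exhibiting ${\boldsymbol s}={\boldsymbol t}-{\boldsymbol t}_i\in\mathcal{S}_i({\boldsymbol t})$, observe that under \eqref{eq:case1simplified} the supremum collapses to $J_1(\tilde{\boldsymbol t})$ at $\tilde{\boldsymbol t}$, and conclude that $\tilde{\boldsymbol t}$ may be taken as ${\boldsymbol t}^*$. Your version is if anything a bit cleaner, spelling out the feasibility of ${\boldsymbol s}={\boldsymbol t}-{\boldsymbol t}_i$ and explicitly flagging the (routine) boundary/closure subtlety that the paper passes over silently.
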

The proof is given in Appendix \ref{app:sufficientCondition}.

\begin{remark} {\em
  Although the condition of   \eqref{eq:case1simplified} looks almost the same as the original one of   \eqref{eq:case1allS}, the key simplification is that for \eqref{eq:case1simplified} we only need $\tilde {\boldsymbol t}$ instead of ${\boldsymbol t}^*$, which is an optimizer of an easier optimization problem.
}\end{remark}


We now present the second result of this subsection. It asserts that, if the optimum of  \eqref{eq:acyclicExponent} is achieved in the second case, then the lower bound of Theorem \ref{thm:acyclicUpperLD} is tight under an additional technical condition.

\begin{theorem}\label{thm:acyclicLowerLD2}
Under assumptions \ref{ass:basic} and \ref{ass:acyclic}, the following holds. Suppose that
\begin{align*}
 & \mathbb{E}\left[ \bar A_i({\boldsymbol s},{\boldsymbol s}^*) \,\left|\, \bar A_i({\boldsymbol s}^*,{\boldsymbol t}^*) = b -\left(\mu_i-\overline\lambda_i\right){\boldsymbol t}_i^* - c_i({\boldsymbol t}^*,{\boldsymbol s}^*) \right. \right] \geq c_i({\boldsymbol t}^*,{\boldsymbol s}^*) - c_i({\boldsymbol t}^*,{\boldsymbol s}),
\end{align*}
for all ${\boldsymbol s}\in\mathcal{S}_i({\boldsymbol t}^*)$. If
\[ h_b^i\left({\boldsymbol t}^*,{\boldsymbol s}^*\right) > c_i\left({\boldsymbol t}^*,{\boldsymbol s}^*\right) \]
then
\begin{equation*}
   \lim\limits_{n\to\infty} \frac{1}{n}\log \mathbb{P}\left( Q_i^{(n)}   > b n \right) = -\inf\limits_{{\boldsymbol t}\in\mathcal{T}_i} \sup\limits_{{\boldsymbol s}\in\mathcal{S}_i({\boldsymbol t})} \left\{ \frac{\Big[b - \left(\mu_i-\overline\lambda_i\right) {\boldsymbol t}_i - c_i({\boldsymbol t},{\boldsymbol s}) \Big]^2}{2\,\Var \left( \bar A_i({\boldsymbol s},{\boldsymbol t}) \right)} \right\}.
  \end{equation*}
\end{theorem}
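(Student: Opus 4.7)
The strategy mirrors the proof of Theorem \ref{thm:acyclicLowerLD1}, but now the ``most-likely'' path is forced to satisfy the single linear constraint appearing in the second case of \eqref{eq:acyclicExponent}. By Theorem \ref{thm:probAsFunctOfInputs}, Proposition \ref{thm:Icontinuity}, and the lower-semicontinuity half of Schilder's theorem (Theorem \ref{thm:Schilder}(ii)), it suffices to exhibit a single element $f^\star \in \overline{\mathcal{E}_i(b)}$ whose rate $\mathbb{I}(f^\star)$ matches the lower bound already supplied by Theorem \ref{thm:acyclicUpperLD}. The hypothesis $h_b^i({\boldsymbol t}^*,{\boldsymbol s}^*) > c_i({\boldsymbol t}^*,{\boldsymbol s}^*)$ guarantees that $I_b^i({\boldsymbol t}^*,{\boldsymbol s}^*)$ is computed from the second-case branch of \eqref{eq:acyclicExponent}, so the target value becomes
\[
\frac{\bigl[b - (\mu_i-\overline{\lambda}_i){\boldsymbol t}_i^* - c_i({\boldsymbol t}^*,{\boldsymbol s}^*)\bigr]^2}{2\,\Var \bigl(\bar A_i({\boldsymbol s}^*,{\boldsymbol t}^*)\bigr)}.
\]

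The natural candidate is the conditional-mean path
\[
f^\star(\cdot) \triangleq \mathbb{E}\!\left[ \hat{A}(\cdot) \,\Bigm|\, \bar A_i({\boldsymbol s}^*,{\boldsymbol t}^*) = b - (\mu_i-\overline{\lambda}_i){\boldsymbol t}_i^* - c_i({\boldsymbol t}^*,{\boldsymbol s}^*) \right].
\]
By the standard correspondence between conditional means of centered Gaussian processes and minimum-norm elements of the associated {\sc rkhs} $\mathcal{R}^k$, the path $f^\star$ is precisely the minimizer of $\mathbb{I}$ over the affine hyperplane cut out in $\mathcal{R}^k$ by that single linear constraint, and a direct computation of its squared norm yields exactly the displayed target expression. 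It then remains to verify $f^\star \in \overline{\mathcal{E}_i(b)}$: taking ${\boldsymbol t}={\boldsymbol t}^*$ in the definition of $\mathcal{E}_i(b)$, we need
\[
\bar f^\star_i({\boldsymbol s},{\boldsymbol t}^*) \;\geq\; b - (\mu_i-\overline{\lambda}_i){\boldsymbol t}_i^* - c_i({\boldsymbol t}^*,{\boldsymbol s}) \qquad \text{for every } {\boldsymbol s}\in\mathcal{S}_i({\boldsymbol t}^*),
\]
where $\bar f^\star_i$ denotes the analogue of $\bar A_i$ built from $f^\star$, and equality already holds at ${\boldsymbol s}={\boldsymbol s}^*$ by construction. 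Using the telescoping identity $\bar A_i({\boldsymbol s},{\boldsymbol t}^*) = \bar A_i({\boldsymbol s}^*,{\boldsymbol t}^*) + \bar A_i({\boldsymbol s},{\boldsymbol s}^*)$, which is immediate from the linearity of $\bar A_i$ in its first two arguments, the inequality above reduces to
\[
\mathbb{E}\!\left[\bar A_i({\boldsymbol s},{\boldsymbol s}^*) \,\Bigm|\, \bar A_i({\boldsymbol s}^*,{\boldsymbol t}^*) = b - (\mu_i-\overline{\lambda}_i){\boldsymbol t}_i^* - c_i({\boldsymbol t}^*,{\boldsymbol s}^*)\right] \;\geq\; c_i({\boldsymbol t}^*,{\boldsymbol s}^*) - c_i({\boldsymbol t}^*,{\boldsymbol s}),
\]
which is precisely the technical hypothesis of the theorem. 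Combining the matching upper bound on $-\tfrac{1}{n}\log\mathbb{P}(Q_i^{(n)} > bn)$ thus produced with the lower bound of Theorem \ref{thm:acyclicUpperLD} yields the stated equality.

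The main obstacle I anticipate is not any single conceptual jump but the bookkeeping imposed by the multi-coordinate, multi-path {\sc rkhs} structure: one must carefully justify the explicit form of the minimum-norm element under a constraint that aggregates evaluations of several coordinates of $\hat{A}$ at several distinct times (with generally non-trivial cross-covariances), and check that the resulting squared norm collapses to the single ratio above. A secondary but important point is reconciling the strict inequality in the definition of $\mathcal{E}_i(b)$ with the equality achieved by $f^\star$ at ${\boldsymbol s}={\boldsymbol s}^*$; this is absorbed by passing to $\overline{\mathcal{E}_i(b)}$ and invoking Proposition \ref{thm:Icontinuity}.
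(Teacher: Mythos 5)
Your proposal is correct and matches the paper's intended argument: the paper explicitly states that the proof of Theorem~\ref{thm:acyclicLowerLD2} is analogous to that of Theorem~\ref{thm:acyclicLowerLD1} and omits it, and your construction (conditional-mean path under the single active constraint $\bar A_i({\boldsymbol s}^*,{\boldsymbol t}^*)=b - (\mu_i-\overline\lambda_i){\boldsymbol t}_i^* - c_i({\boldsymbol t}^*,{\boldsymbol s}^*)$, RKHS-norm computation, then membership in $\overline{\mathcal{E}_i(b)}$ via the telescoping identity $\bar A_i({\boldsymbol s},{\boldsymbol t}^*) = \bar A_i({\boldsymbol s}^*,{\boldsymbol t}^*) + \bar A_i({\boldsymbol s},{\boldsymbol s}^*)$ together with the theorem's technical hypothesis) is precisely that analogue. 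The identification of the relevant branch of \eqref{eq:acyclicExponent} via $h_b^i({\boldsymbol t}^*,{\boldsymbol s}^*)>c_i({\boldsymbol t}^*,{\boldsymbol s}^*)$, the use of Proposition~\ref{thm:Icontinuity} to reconcile the open set $\mathcal{E}_i(b)$ with the closure containing $f^\star$, and the pairing with Theorem~\ref{thm:acyclicUpperLD} are all handled correctly.
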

The proof is analogous to the proof of Theorem \ref{thm:acyclicLowerLD1}, and it is thus omitted.

\begin{remark} {\em
  Note that the second condition in Theorem \ref{thm:acyclicLowerLD2} is satisfied  if the first one is satisfied with strict inequality for ${\boldsymbol s}={\boldsymbol t}^*-{\boldsymbol t}_i^*$.
}\end{remark}

Finally, we show that if the optimum of  \eqref{eq:acyclicExponent} is achieved in the third case, then the lower bound of Theorem \ref{thm:acyclicUpperLD} is tight under a different additional technical condition.

\begin{theorem}\label{thm:acyclicLowerLD3}
Under Assumptions \ref{ass:basic} and \ref{ass:acyclic}, the following holds. Suppose that
\begin{align}
 & \mathbb{E}\left[ \left. \bar A_i({\boldsymbol s},{\boldsymbol s}^*) \,\right|\, \bar A_i({\boldsymbol t}^*-{\boldsymbol t}^*_i,{\boldsymbol t}^*) = b - \left(\mu_i-\overline\lambda_i\right) t^*_i;\,\, \bar A_i({\boldsymbol t}^*-{\boldsymbol t}^*_i,{\boldsymbol s}^*) = c_i({\boldsymbol t}^*,{\boldsymbol s}^*) \right] \geq c_i({\boldsymbol t}^*,{\boldsymbol s}^*) - c_i({\boldsymbol t}^*,{\boldsymbol s}), \label{eq:weirdCondition}
\end{align}
for all ${\boldsymbol s}\in\mathcal{S}_i({\boldsymbol t}^*)$. If
\[ h_b^i\left({\boldsymbol t}^*,{\boldsymbol s}^*\right) \leq c_i\left({\boldsymbol t}^*,{\boldsymbol s}^*\right), \qquad \text{and} \qquad  k_b^i\left({\boldsymbol t}^*,{\boldsymbol s}^*\right) \geq c_i\left({\boldsymbol t}^*,{\boldsymbol s}^*\right), \]
then
\begin{align*}
   &\lim\limits_{n\to\infty} \frac{1}{n}\log \mathbb{P}\left( Q_i^{(n)}   > b n \right) \\
   & \qquad\qquad = -\inf\limits_{{\boldsymbol t}\in\mathcal{T}_i} \sup\limits_{{\boldsymbol s}\in\mathcal{S}_i({\boldsymbol t})} \left\{ \frac{\Big[ b - \left(\mu_i-\overline\lambda_i\right) {\boldsymbol t}_i \Big]^2}{2\,\Var \Big( \bar A_i({\boldsymbol t}-{\boldsymbol t}_i,{\boldsymbol t}) \Big)} + \frac{\Big[ k_b^i({\boldsymbol t},{\boldsymbol s})- c_i({\boldsymbol t},{\boldsymbol s}) \Big]^2}{2\,\Var \Big( \bar A_i({\boldsymbol s},{\boldsymbol t}) \,\Big|\, \bar A_i({\boldsymbol t}-{\boldsymbol t}_i,{\boldsymbol t}) = b - \left(\mu_i-\overline\lambda_i\right) {\boldsymbol t}_i \Big)} \right\}.
  \end{align*}
\end{theorem}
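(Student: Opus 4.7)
The plan is to follow the same skeleton as the proof of Theorem \ref{thm:acyclicLowerLD1} (identify a most-likely sample path in the critical event, and show it lies in $\mathcal{E}_i(b)$), but using a \emph{two-constraint} Gaussian conditioning instead of a single one. The ``$\geq$'' direction (i.e.\ the lower bound on the decay rate) is immediate: at an inf-sup optimizer $({\boldsymbol t}^\star,{\boldsymbol s}^\star)$ of \eqref{eq:acyclicExponent} with $h_b^i({\boldsymbol t}^\star,{\boldsymbol s}^\star)\leq c_i({\boldsymbol t}^\star,{\boldsymbol s}^\star)$ and $k_b^i({\boldsymbol t}^\star,{\boldsymbol s}^\star)\geq c_i({\boldsymbol t}^\star,{\boldsymbol s}^\star)$, the inner supremum falls in the third branch of \eqref{eq:acyclicExponent}, so Theorem \ref{thm:acyclicUpperLD} applies verbatim. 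It therefore suffices, by Proposition \ref{thm:Icontinuity}, to produce an element $f^\star\in\overline{\mathcal{E}_i(b)}$ with $\mathbb{I}(f^\star)$ equal to the right-hand side.

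I would construct $f^\star$ as the minimal $\mathcal{R}^k$-norm element of the affine subspace $\{L_1=v_1,\;L_2=v_2\}\subset\mathcal{R}^k$, where
\[
L_1 := \bar A_i\bigl({\boldsymbol t}^\star-{\boldsymbol t}^\star_i,{\boldsymbol t}^\star\bigr),\qquad L_2 := \bar A_i\bigl({\boldsymbol t}^\star-{\boldsymbol t}^\star_i,{\boldsymbol s}^\star\bigr),
\]
with target values $v_1 = b-(\mu_i-\overline\lambda_i){\boldsymbol t}^\star_i$ and $v_2 = c_i({\boldsymbol t}^\star,{\boldsymbol s}^\star)$. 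Equivalently, $f^\star$ is the conditional mean of the centered Gaussian field given $L_1=v_1$ and $L_2=v_2$. The standard chain rule for Gaussian conditional variance (equivalently, the two-step projection in the RKHS) gives
\[
\mathbb{I}(f^\star)=\frac{v_1^2}{2\,\Var(L_1)} + \frac{\bigl(v_2-\mathbb{E}[L_2\mid L_1=v_1]\bigr)^2}{2\,\Var(L_2\mid L_1=v_1)}.
\]
By definition of $k_b^i$, $\mathbb{E}[L_2\mid L_1=v_1]=k_b^i({\boldsymbol t}^\star,{\boldsymbol s}^\star)$, and using the linear identity $\bar A_i({\boldsymbol s}^\star,{\boldsymbol t}^\star)=L_1-L_2$ one gets $\Var(L_2\mid L_1)=\Var(\bar A_i({\boldsymbol s}^\star,{\boldsymbol t}^\star)\mid L_1)$, so $\mathbb{I}(f^\star)$ matches the third branch of \eqref{eq:acyclicExponent} at $({\boldsymbol t}^\star,{\boldsymbol s}^\star)$.

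The main obstacle is to verify $f^\star\in\overline{\mathcal{E}_i(b)}$. Take ${\boldsymbol t}={\boldsymbol t}^\star$ as the witness for the outer existential; the task is then to show that
\[
\bar A_i({\boldsymbol s},{\boldsymbol t}^\star)(f^\star)\;\geq\; b-(\mu_i-\overline\lambda_i){\boldsymbol t}^\star_i - c_i({\boldsymbol t}^\star,{\boldsymbol s})\qquad\text{for every } {\boldsymbol s}\in\mathcal{S}_i({\boldsymbol t}^\star).
\]
Using the additivity $\bar A_i({\boldsymbol t}^\star-{\boldsymbol t}^\star_i,{\boldsymbol s})=\bar A_i({\boldsymbol t}^\star-{\boldsymbol t}^\star_i,{\boldsymbol s}^\star)+\bar A_i({\boldsymbol s}^\star,{\boldsymbol s})$ together with the imposed values $L_1(f^\star)=v_1$ and $L_2(f^\star)=v_2$, this inequality is algebraically equivalent to
\[
\bar A_i({\boldsymbol s},{\boldsymbol s}^\star)(f^\star)\;\geq\; c_i({\boldsymbol t}^\star,{\boldsymbol s}^\star)-c_i({\boldsymbol t}^\star,{\boldsymbol s}).
\]
Since $f^\star$ is the conditional mean, its value under any linear functional equals the corresponding conditional expectation, so the left-hand side is exactly $\mathbb{E}[\bar A_i({\boldsymbol s},{\boldsymbol s}^\star)\mid L_1=v_1,\,L_2=v_2]$, and the required inequality is precisely hypothesis \eqref{eq:weirdCondition}. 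This places $f^\star$ in $\overline{\mathcal{E}_i(b)}$, and the $\mathbb{I}$-continuity asserted by Proposition \ref{thm:Icontinuity} then transfers the bound from the closure to $\mathcal{E}_i(b)$, completing the matching upper bound.
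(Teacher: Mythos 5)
Your argument is correct and follows essentially the same path as the paper's own proof: you define the same two-constraint conditional mean $f^\star$, compute $\mathbb{I}(f^\star)$ via the Gaussian chain rule (the paper does this by hand, labelling it ``tedious but straightforward''), and verify $f^\star\in\overline{\mathcal{E}_i(b)}$ using the telescoping identity $\bar A_i({\boldsymbol s},{\boldsymbol t}^\star)=L_1-L_2+\bar A_i({\boldsymbol s},{\boldsymbol s}^\star)$ together with hypothesis \eqref{eq:weirdCondition}, which is exactly the paper's final step.
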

The structure of the proof is the same as the proof of Theorem \ref{thm:acyclicLowerLD1}, and it is given in Appendix \ref{app:acyclicLowerLD3}.

\section{Example: equivalence to a single server queue}\label{sec:examples}

In this section we show that, if the input process is a multivariate fractional Brownian motion with non short-range dependence and non-negative correlation between its coordinates, and if the service rates are sufficiently large, then the large deviations behavior of any fixed queue in the network is the same as if all inputs to upstream queues were inputs to the queue itself. This phenomenon was also observed in \cite{MichelTandemPaper} for the second queue in a tandem, and here we generalize the conditions under which it occurs.

\subsection{Preliminaries on multivariate fractional Brownian motions}
Consider the case where the exogenous arrival process $A^{(n)}(\cdot)$ is a multivariate fractional Brownian motion (mfBm). Since each coordinate is a real valued fBm, for each $i\in\{1,\dots,k\}$, and for every $t<s<0$, we have
\[ {\mathbb C}{\rm ov}\left(\hat A^{(n)}_i(t),\,\, \hat A^{(n)}_i(s)\right) = \frac{\sigma_i^2}{2} \Big[ |t|^{2H_i} + |s|^{2H_i} - |s-t|^{2H_i} \Big], \]
where $H_i\in(0,1)$ is its Hurst index, and
\[ \sigma_i \triangleq \sqrt{ \Var \left(\hat A^{(n)}_i(1)\right) } \]
is its variance. Furthermore, it is known \cite{covariance} that, for each $i,j \in \{1,\dots,k\}$, and for every $t<s<0$, we have
\[ {\mathbb C}{\rm ov}\left(\hat A^{(n)}_i(t),\,\, \hat A^{(n)}_j(s)\right) =
\begin{cases}
\frac{\sigma_i \sigma_j}{2} \Big[ (\rho_{i,j}-\eta_{i,j})|t|^{H_i+H_j} + (\rho_{i,j}+\eta_{i,j})|s|^{H_i+H_j} - (\rho_{i,j}-\eta_{i,j})|s-t|^{H_i+H_j} \Big], \\
\hspace{7.5cm} \text{ if } H_i+H_j\neq 1, \\
\frac{\sigma_i \sigma_j}{2} \Big[ \rho_{i,j}\big(|t| + |s| - |s-t| \big) + \eta_{i,j}\big( s\log|s| - t\log|t| -(s-t)\log|s-t| \big) \Big], \\ \hspace{7.5cm} \text{ if } H_i+H_j= 1,
\end{cases} \]
where
\[ \rho_{i,j} \triangleq {\mathbb C}{\rm orr}\,\left(\hat A^{(n)}_i(1),\,\, \hat A^{(n)}_j(1)\right) \]
are their covariances, and $\eta_{i,j}=-\eta_{j,i}\in \mathbb{R}$ represents the inter-correlation in time between the two coordinates. Note that, contrary to the single-dimensional fBm, they need not be time-reversible. In particular, a mfBm is time-reversible if and only if $\eta_{i,j}=0$ for all $i,j$  \cite[Prop.\ 6]{basicProperties}. Moreover, the parameters $\eta_{i,j}$ have the following interpretation \cite{basicProperties}:
\begin{itemize}
  \item [(i)] If the one-dimensional fBm\,s are short-range dependent (i.e., if $H_i,H_j<1/2$), then they are either short-range interdependent if $\rho_{i,j}\neq 0$ or $\eta_{i,j}\neq 0$, or independent if $\rho_{i,j}=\eta_{i,j}=0$. This also holds when $H_i+H_j<1$, even if one of them is larger than or equal to $1/2$.
  \item [(ii)] If the one-dimensional fBm\,s are long-range dependent (i.e., if $H_i,H_j>1/2$), then they are either long-range interdependent if $\rho_{i,j}\neq 0$ or $\eta_{i,j}\neq 0$, or independent if $\rho_{i,j}=\eta_{i,j}=0$. This also holds when $H_i+H_j>1$, even if one of them is smaller than or equal to $1/2$.
  \item [(iii)] If the one-dimensional fBm\,s are Brownian motions (i.e., if $H_i=H_j=1/2$), then they are either long-range interdependent if $\eta_{i,j}\neq 0$, or independent if $\eta_{i,j}=0$. This also holds whenever $H_i+H_j=1$, even if neither of them are equal to $1/2$.
\end{itemize}

\subsection{Non-negatively correlated, non short-range dependent inputs}
We now present the main result of this section.

\begin{theorem}\label{thm:mfBmExample}
  Fix some node $i$. Suppose that $H_j=H \geq 1/2$, for all $j\in\{1,\dots,k\}$, that $\eta_{j,l}=0$, for all $j,l\in\{1,\dots,k\}$, and that $\rho_{j,l}\geq 0$, for all $j,l\in\{1,\dots,k\}$. Moreover, suppose that
  \begin{align}
    &\min\left\{  \mu_j - \lambda_j -\sum\limits_{l\in\mathcal{N}_{\rm in}(j)}  \mu_l p_{l,j} : j\neq i \right\} > \label{eq:simpleCondition} \\
    &\hspace{3mm}  \sup\limits_{\alpha\in (0,1)^{|\mathcal{P}_2(i)|}} \left\{ \frac{ \sum\limits_{r\in\mathcal{P}_2(i)} \left( \sigma_{r_1}\sigma_i\rho_{r_1,i} + \sum\limits_{r'\in\mathcal{P}_2(i)} \sigma_{r_1}\sigma_{r'_1}\rho_{r_1,r'_1} \Pi_{r'}\right) \left[\left(\alpha_r\right)^{2H}+ 1 - \left( 1- \alpha_r\right)^{2H}\right]\Pi_r }{ \left( \sum\limits_{r\in\mathcal{P}_2(i)} \alpha_r \Pi_r \right) } \right\} \left(\frac{\mu_i-\overline{\lambda}_i}{2H \overline{\sigma}_i^2}\right), \nonumber
  \end{align}
  where
  \[ \overline{\sigma}_i^2 \triangleq \sigma_i^2 + \sum\limits_{r\in\mathcal{P}_2(i)} \left( 2 \sigma_{r_1}\sigma_i\rho_{r_1,i} + \sum\limits_{r'\in\mathcal{P}_2(i)} \sigma_{r_1}\sigma_{r'_1}\rho_{r_1,r'_1} \Pi_{r'}\right)\Pi_r. \]
  Then, for every $b>0$,
  \[ - \lim\limits_{n\to\infty} \frac{1}{n}\log \mathbb{P}\left( Q^{(n)}_i > nb \right)  = \frac{1}{2\overline{\sigma}_i^2}\left( \frac{b}{1-H} \right)^{2-2H}\left( \frac{\mu_i - \overline{\lambda}_i}{H} \right)^{2H}. \]
\end{theorem}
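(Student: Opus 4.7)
The strategy is to invoke Theorem \ref{thm:acyclicLowerLD1} through the simpler sufficient condition of Lemma \ref{lem:sufficientCondition}. Concretely, the plan is: (a) identify a minimizer $\tilde{\boldsymbol t}\in\overline{\mathcal{T}_i}$ of the first-case exponent $F({\boldsymbol t})=[b-(\mu_i-\overline\lambda_i){\boldsymbol t}_i]^2\big/\big[2\Var (\bar A_i({\boldsymbol t}-{\boldsymbol t}_i,{\boldsymbol t}))\big]$ and evaluate $F(\tilde{\boldsymbol t})$ to recover the claimed formula; (b) establish condition \eqref{eq:case1simplified} at that $\tilde{\boldsymbol t}$ under hypothesis \eqref{eq:simpleCondition}.

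For (a), the mfBm covariance with $\eta_{j,l}=0$ and common Hurst $H\ge 1/2$ gives, writing $d={\boldsymbol t}_r-{\boldsymbol t}_{r'}$,
\[
\cov\!\left(\hat A_{r_1}({\boldsymbol t}_r)-\hat A_{r_1}({\boldsymbol t}_r-{\boldsymbol t}_i),\ \hat A_{r'_1}({\boldsymbol t}_{r'})-\hat A_{r'_1}({\boldsymbol t}_{r'}-{\boldsymbol t}_i)\right)=\tfrac{\sigma_{r_1}\sigma_{r'_1}\rho_{r_1,r'_1}}{2}\left[|d+{\boldsymbol t}_i|^{2H}+|d-{\boldsymbol t}_i|^{2H}-2|d|^{2H}\right].
\]
A monotonicity analysis of the bracket $f(d)$ (using the divergence of $f''$ with opposite signs at $0$ and $|{\boldsymbol t}_i|$ to force $f'<0$ on $(0,|{\boldsymbol t}_i|)$, strict concavity of $x\mapsto x^{2H-1}$ for $H\in(1/2,1)$ to force $f'<0$ on $(|{\boldsymbol t}_i|,\infty)$, and a piecewise-linear computation at $H=1/2$) shows that this kernel is globally maximized at $d=0$, where it equals $\sigma_{r_1}\sigma_{r'_1}\rho_{r_1,r'_1}|{\boldsymbol t}_i|^{2H}$. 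Combined with $\rho_{j,l}\ge 0$, this forces $\Var (\bar A_i({\boldsymbol t}-{\boldsymbol t}_i,{\boldsymbol t}))$ to be maximized on $\overline{\mathcal{T}_i}$ precisely at the boundary point with all ${\boldsymbol t}_r={\boldsymbol t}_i$, with value $|{\boldsymbol t}_i|^{2H}\overline\sigma_i^2$. The ensuing one-dimensional problem is the classical isolated-fBm-queue optimization, whose minimizer is $\tilde{\boldsymbol t}_i=-bH/[(1-H)(\mu_i-\overline\lambda_i)]$ and whose value (using $b-(\mu_i-\overline\lambda_i)\tilde{\boldsymbol t}_i=b/(1-H)$) is precisely $(1/(2\overline\sigma_i^2))(b/(1-H))^{2-2H}((\mu_i-\overline\lambda_i)/H)^{2H}$.

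For (b), parameterize ${\boldsymbol s}\in\mathcal{S}_i(\tilde{\boldsymbol t})$ by ${\boldsymbol s}_r=\alpha_r\tilde{\boldsymbol t}_i$ with $\alpha_r\in(0,1)$ for $r\in\mathcal{P}_2(i)$ and $\alpha_i=0$. A direct Gaussian projection at $\tilde{\boldsymbol t}$ yields
\[
k_b^i(\tilde{\boldsymbol t},{\boldsymbol s})=\frac{b}{2(1-H)\overline\sigma_i^2}\sum_{r\in\mathcal{P}_2(i)}\left[\alpha_r^{2H}+1-(1-\alpha_r)^{2H}\right]\Big(\sigma_{r_1}\sigma_i\rho_{r_1,i}+\sum_{r'\in\mathcal{P}_2(i)}\sigma_{r_1}\sigma_{r'_1}\rho_{r_1,r'_1}\Pi_{r'}\Big)\Pi_r.
\]
For $c_i$, the identity $\mu_j-\overline\lambda_j=\sum_{r\in\mathcal{P}_1(j)}\big(\mu_{r_1}-\lambda_{r_1}-\sum_l\mu_lp_{l,r_1}\big)\Pi_r$ (a direct induction on the depth of the upstream subgraph of $j$) rewrites $\sum_{j}p_{j,i}(\mu_j-\overline\lambda_j)$ as a sum over $\mathcal{P}_2(i)$, and telescoping with the second term collapses $c_i$ to
\[
c_i(\tilde{\boldsymbol t},{\boldsymbol s})=|\tilde{\boldsymbol t}_i|\sum_{r\in\mathcal{P}_2(i)}\Big(\mu_{r_1}-\lambda_{r_1}-\sum_{l\in\mathcal{N}_{\rm in}(r_1)}\mu_lp_{l,r_1}\Big)\alpha_r\Pi_r.
\]
Bounding each factor $\mu_{r_1}-\lambda_{r_1}-\sum_l\mu_lp_{l,r_1}$ below by the minimum on the left-hand side of \eqref{eq:simpleCondition}, substituting $|\tilde{\boldsymbol t}_i|=bH/[(1-H)(\mu_i-\overline\lambda_i)]$, and rearranging the inequality $k_b^i(\tilde{\boldsymbol t},{\boldsymbol s})<c_i(\tilde{\boldsymbol t},{\boldsymbol s})$ yields exactly the supremum inequality \eqref{eq:simpleCondition} after taking the worst case over $\alpha\in(0,1)^{|\mathcal{P}_2(i)|}$.

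I expect the main obstacle to lie in step (a), specifically the global maximization of $f(d)=|d+{\boldsymbol t}_i|^{2H}+|d-{\boldsymbol t}_i|^{2H}-2|d|^{2H}$ at $d=0$ for all $H\in[1/2,1)$. Local behavior at the origin and decay at infinity are straightforward, but ruling out spurious interior maxima requires a careful sign analysis of $f'$ separately on $(0,|{\boldsymbol t}_i|)$ and $(|{\boldsymbol t}_i|,\infty)$, relying on the opposite-sign divergence of $f''$ at both endpoints of the first interval and on the strict concavity of $x\mapsto x^{2H-1}$ on the second. Once this is settled, the non-negativity of $\rho$ closes the variance maximization, Lemma \ref{lem:sufficientCondition} promotes \eqref{eq:case1simplified} to \eqref{eq:case1allS}, Theorem \ref{thm:acyclicLowerLD1} collapses the limit to $F(\tilde{\boldsymbol t})$, and the evaluation from (a) delivers the stated exponent.
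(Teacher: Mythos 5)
Your strategy coincides with the paper's: reduce via Lemma~\ref{lem:sufficientCondition} and Theorem~\ref{thm:acyclicLowerLD1}, show there is a minimizer with ${\boldsymbol t}_r={\boldsymbol t}_i$ for all $r\in\mathcal{P}_2(i)$, evaluate the resulting one-dimensional exponent, and verify~\eqref{eq:case1simplified} under the parameterization ${\boldsymbol s}_r=\alpha_r\tilde{\boldsymbol t}_i$. Your computations of $\tilde{\boldsymbol t}_i$, of $k_b^i$, and of $c_i$ all match the paper; the paper shortcuts your ``telescoping'' identity by simply invoking $c_i({\boldsymbol t},{\boldsymbol t}-{\boldsymbol t}_i)=0$, but both derivations are valid. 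Your framing of step (a) — bound each summand $\frac{\sigma_{r_1}\sigma_{r'_1}\rho_{r_1,r'_1}}{2}f({\boldsymbol t}_r-{\boldsymbol t}_{r'})$ by its value at $d=0$ and use $\rho\ge 0$ — is if anything cleaner than the paper's argument (which shows the derivative of the variance with respect to the smallest coordinate is nonnegative and pushes that coordinate up); both ultimately reduce to showing $f'(d)\le 0$ for $d>0$.

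The one piece that does not hold up as you have stated it is the sign analysis of $f'$ on $(0,|{\boldsymbol t}_i|)$. Knowing that $f''\to-\infty$ at $0^+$ and $f''\to+\infty$ at $|{\boldsymbol t}_i|^-$, together with $f'(0^+)=0$ and $f'(|{\boldsymbol t}_i|^-)<0$, does not by itself exclude a positive hump of $f'$ in the interior; you would also need that $f''$ changes sign exactly once on $(0,|{\boldsymbol t}_i|)$. That is in fact true here, because with $a=|{\boldsymbol t}_i|$ the bracket $(a+d)^{2H-2}+(a-d)^{2H-2}-2d^{2H-2}$ is strictly increasing in $d$ on $(0,a)$, but you would have to say so. The paper bypasses second derivatives entirely: for $d\in(0,a)$ it verifies $f'(d)\le 0$ directly from the subadditivity of $x\mapsto x^{2H-1}$ on $[0,\infty)$ and the elementary bound $2^{2H-1}\le 2$, namely $(u+v)^{2H-1}=\big((v-u)+2u\big)^{2H-1}\le (v-u)^{2H-1}+(2u)^{2H-1}\le (v-u)^{2H-1}+2u^{2H-1}$ with $u=|d|$, $v=a$; for $d\ge a$ it uses the concavity of $x\mapsto x^{2H-1}$ exactly as you propose. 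Replacing your $f''$ sketch with this one-line subadditivity inequality closes the gap cleanly and avoids the unimodality bookkeeping.
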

The proof is given in Appendix \ref{app:mfBmExample}, and amounts to  checking that Theorem \ref{thm:acyclicLowerLD1} applies in this case, to then compute the exact decay rate.

\begin{remark} {\em
  Note that this decay rate is the same as the one that we would obtain in a single-server queue with processing rate $\mu_i$ and input
  \[ \sum\limits_{r\in\mathcal{P}_1(i)} A^{(n)}_{r_1}(\cdot)\Pi_r. \]
  This means that, under the assumptions of Theorem \ref{thm:mfBmExample},  in this regime the queues upstream of node $i$ are `transparent'. In particular, this implies that the most likely overflow path is the one where all upstream queues are empty.
}\end{remark}

\begin{remark} {\em
  In the case of a pair of queues tandem with arrivals only to the first queue, the condition in   \eqref{eq:simpleCondition} is the same as the one obtained in the analysis of the tandem queues done in \cite{MichelTandemPaper}.
}\end{remark}

\section{Conclusions}\label{sec:conclusion}
We have considered an acyclic network of queues with (possibly correlated) Gaussian inputs and static routing, and characterized the large deviations behavior of the steady-state queue length in each queue of the network. We achieved this by defining an appropriate multi-dimensional Reproducing Kernel Hilbert Space, and using Schilder's theorem to obtain lower and upper bounds for the asymptotic exponential decay rate. This generalizes previous results, which focused on isolated queues and two-queue tandem systems (with arrivals only to the first queue).

While the results that we obtain are quite general both in terms of the network structure and in terms of the correlation structure among the arrival processes to the different nodes, there are still interesting open problems. For instance:
\begin{itemize}
  \item [(i)] While we considered essentially only single-class traffic with a deterministic split of the work departing from each server, it would be interesting to extend our results to multi-class networks, where the servers are shared by using, for example, the Generalized Processor Sharing discipline \cite{GPSQueues}.
  \item [(ii)] While we only obtained large-deviations results for each queue separately, it would be interesting to obtain similar results for the joint queue lengths.
\end{itemize}

\appendix

\section{Proof of Lemma \ref{lem:inputFormula}}\label{app:proofInputFormula}

We prove this by induction in the maximum length of paths that end in node $i$. Suppose that the maximum length is one. Then, $\mathcal{P}_2(i)=\emptyset$ and thus
\[ I^{(n)}_i(t,0) = A^{(n)}_i(t,0). \]

Now suppose that  \eqref{eq:inputProcess} holds for all nodes $j$ such that the maximum length of paths that end in $j$ is at most one less than the maximum lengths of paths that end in node $i$. Recall that
\begin{align*}
 D^{(n)}_j(t,0) &= Q^{(n)}_j(t) + I^{(n)}_j(t,0) - Q^{(n)}_j(0),\\
  Q^{(n)}_j(t)& = \sup\limits_{s<t} \left\{ I^{(n)}_j(s,t) - n\mu_j (t-s) \right\}.
\end{align*}
Combining the last two equations, we obtain that $I^{(n)}_i(t,0)$ equals
\begin{align*}
A^{(n)}_i&(t,0) + \sum\limits_{j\in\mathcal{N}_{\rm in}(i)} p_{j,i} D^{(n)}_j(t,0) \\
  &= A^{(n)}_i(t,0) + \sum\limits_{j\in\mathcal{N}_{\rm in}(i)} p_{j,i} \left[ \sup\limits_{t_j<t} \left\{ I^{(n)}_j(t_j,t) - n\mu_j (t-t_j) \right\} + I^{(n)}_j(t,0) - \sup\limits_{s_j<0} \left\{ I^{(n)}_j(s_j,0) + n\mu_j s_j \right\} \right] \\
  &= A^{(n)}_i(t,0) + \sum\limits_{j\in\mathcal{N}_{\rm in}(i)} p_{j,i} \left[ \sup\limits_{t_j<t} \left\{ I^{(n)}_j(t_j,0) - n\mu_j (t-t_j) \right\} - \sup\limits_{s_j<0} \left\{ I^{(n)}_j(s_j,0) + n\mu_j s_j \right\} \right].
\end{align*}
Since all $j$ are inbound neighbors of $i$, and the graph is acyclic, the maximum lengths of paths that end in nodes $j$ are at most one less than the maximum length of paths that end in node $i$. Then, using the inductive hypothesis on the input processes $I^{(n)}_j(t_j,0)$, $I^{(n)}_i(t,0)$ equals
$A^{(n)}_i(t,0)$ increased by
\begin{align*}
  \sum\limits_{j\in\mathcal{N}_{\rm in}(i)} &p_{j,i} \left[ \sup\limits_{t_j<t} \left\{ A^{(n)}_j(t_j,0) +  \sup\limits_{{\boldsymbol t}\in\mathcal{T}_j(t_j)} \left\{ \sum\limits_{r\in\mathcal{P}_2(j)}  \left[ A^{(n)}_{r_1}({\boldsymbol t}_{r},0)+ n\mu_{r_1}({\boldsymbol t}_{r}-{\boldsymbol t}_{r_+}) \right] \Pi_r \right\} \right.\right. \\
   &\qquad\qquad    \left.- \sup\limits_{{\boldsymbol s}\in\mathcal{T}_j(0)} \left\{ \sum\limits_{r\in\mathcal{P}_2(j)} \left[ A^{(n)}_{r_1}({\boldsymbol s}_{r},0)+ n\mu_{r_1}({\boldsymbol s}_{r}-{\boldsymbol s}_{r_+}) \right] \Pi_r \right\} - n\mu_j (t-t_j) \right\} \\
  &\qquad - \sup\limits_{s_j<0} \left\{ A^{(n)}_j(s_j,0) +  \sup\limits_{{\boldsymbol t}\in\mathcal{T}_j(s_j)} \left\{ \sum\limits_{r\in\mathcal{P}_2(j)}  \left[ A^{(n)}_{r_1}({\boldsymbol t}_{r},0)+ n\mu_{r_1}({\boldsymbol t}_{r}-{\boldsymbol t}_{r_+}) \right] \Pi_r \right\} \right. \\
   &\qquad\qquad     \left. \left.- \sup\limits_{{\boldsymbol s}\in\mathcal{T}_j(0)} \left\{ \sum\limits_{r\in\mathcal{P}_2(j)} \left[ A^{(n)}_{r_1}({\boldsymbol s}_{r},0)+ n\mu_{r_1}({\boldsymbol s}_{r}-{\boldsymbol s}_{r_+}) \right] \Pi_r \right\} + n\mu_j s_j \right\} \right] =\\
    \sum\limits_{j\in\mathcal{N}_{\rm in}(i)} &p_{j,i} \left[ \sup\limits_{t_j<t} \left\{ A^{(n)}_j(t_j,0) +  \sup\limits_{{\boldsymbol t}\in\mathcal{T}_j(t_j)} \left\{ \sum\limits_{r\in\mathcal{P}_2(j)}  \left[ A^{(n)}_{r_1}({\boldsymbol t}_{r},0)+ n\mu_{r_1}({\boldsymbol t}_{r}-{\boldsymbol t}_{r_+}) \right] \Pi_r \right\} - n\mu_j (t-t_j) \right\} \right. \\
  &\qquad\qquad  \left. - \sup\limits_{s_j<0} \left\{ A^{(n)}_j(s_j,0) +  \sup\limits_{{\boldsymbol t}\in\mathcal{T}_j(s_j)} \left\{ \sum\limits_{r\in\mathcal{P}_2(j)}  \left[ A^{(n)}_{r_1}({\boldsymbol t}_{r},0)+ n\mu_{r_1}({\boldsymbol t}_{r}-{\boldsymbol t}_{r_+}) \right] \Pi_r \right\}  + n\mu_j s_j \right\} \right].
\end{align*}
After renaming the variables for ease of exposition, we obtain $I^{(n)}_i(t,0)$ equals
$A^{(n)}_i(t,0)$ increased by
\begin{align*}
  &\sum\limits_{j\in\mathcal{N}_{\rm in}(i)} p_{j,i} \left[ \sup\limits_{t_j<t} \left\{ A^{(n)}_j(t_j,0) +  \sup\limits_{{\boldsymbol t}^{(j)}\in\mathcal{T}_j(t_j)} \left\{ \sum\limits_{r\in\mathcal{P}_2(j)}  \left[ A^{(n)}_{r_1}({\boldsymbol t}^{(j)}_r,0)+ n\mu_{r_1}({\boldsymbol t}^{(j)}_r-{\boldsymbol t}^{(j)}_{r_+}) \right] \Pi_r \right\} - n\mu_j (t-t_j) \right\} \right. \\
  &\qquad \left. - \sup\limits_{s_j<0} \left\{ A^{(n)}_j(s_j,0) +  \sup\limits_{{\boldsymbol s}^{(j)}\in\mathcal{T}_j(s_j)} \left\{ \sum\limits_{r\in\mathcal{P}_2(j)}  \left[ A^{(n)}_{r_1}({\boldsymbol s}^{(j)}_r,0)+ n\mu_{r_1}({\boldsymbol s}^{(j)}_r-{\boldsymbol s}^{(j)}_{r_+}) \right] \Pi_r \right\}  + n\mu_j s_j \right\} \right] \\
  &= \sup\limits_{t_j<t} \left\{ \sup\limits_{{\boldsymbol t}^{(j)}\in\mathcal{T}_j(t_j)} \left\{ \sum\limits_{j\in\mathcal{N}_{\rm in}(i)} p_{j,i} \left[ A^{(n)}_j(t_j,0) +  \sum\limits_{r\in\mathcal{P}_2(j)}  \left[ A^{(n)}_{r_1}({\boldsymbol t}^{(j)}_r,0)+ n\mu_{r_1}({\boldsymbol t}^{(j)}_r-{\boldsymbol t}^{(j)}_{r_+}) \right] \Pi_r - n\mu_j (t-t_j) \right] \right\} \right\} \\
  &\qquad  - \sup\limits_{s_j<0} \left\{ \sup\limits_{{\boldsymbol s}^{(j)}\in\mathcal{T}_j(s_j)} \left\{ \sum\limits_{r\in\mathcal{P}_2(j)} \left[ A^{(n)}_j(s_j,0) +  \sum\limits_{r\in\mathcal{P}_2(j)}  \left[ A^{(n)}_{r_1}({\boldsymbol s}^{(j)}_r,0)+ n\mu_{r_1}({\boldsymbol s}^{(j)}_r-{\boldsymbol s}^{(j)}_{r_+}) \right] \Pi_r + n\mu_j s_j \right] \right\} \right\} \\
  &=\sup\limits_{{\boldsymbol t}\in\mathcal{T}_i(t)} \left\{ \sum\limits_{r\in\mathcal{P}_2(i)}  \left[ A^{(n)}_{r_1}({\boldsymbol t}_{r},0)+ n\mu_{r_1}({\boldsymbol t}_{r}-{\boldsymbol t}_{r_+}) \right] \Pi_r \right\}  - \sup\limits_{{\boldsymbol s}\in\mathcal{T}_i(0)} \left\{ \sum\limits_{r\in\mathcal{P}_2(i)} \left[ A^{(n)}_{r_1}({\boldsymbol s}_{r},0)+ n\mu_{r_1}({\boldsymbol s}_{r}-{\boldsymbol s}_{r_+}) \right] \Pi_r \right\}.
\end{align*}

\section{Proof of Theorem \ref{thm:probAsFunctOfInputs}}\label{app:probAsFunctOfInputs}

By Reich's formula, we have
\begin{align*}
  \mathbb{P}\left(Q^{(n)}_i > nb\right) &= \mathbb{P}\left(\sup\limits_{{\boldsymbol t}_i< 0} \left\{ I^{(n)}_i({\boldsymbol t}_i,0) + n\mu_i {\boldsymbol t}_i \right\} > nb\right).
\end{align*}
By Lemma \ref{lem:inputFormula}, we obtain
\begin{align*}
  \mathbb{P}\left(Q^{(n)}_i > nb\right) &= \mathbb{P}\left(\sup\limits_{{\boldsymbol t}_i< 0} \left\{ A^{(n)}_i({\boldsymbol t}_i,0) +  \sup\limits_{{\boldsymbol t}\in\mathcal{T}_i({\boldsymbol t}_i)} \left\{ \sum\limits_{r\in\mathcal{P}_2(i)}  \left[ A^{(n)}_{r_1}({\boldsymbol t}_{r},0)+ n\mu_{r_1}({\boldsymbol t}_{r}-{\boldsymbol t}_{r_+}) \right] \Pi_r \right.\right.\right. \\
 &\qquad\qquad\qquad\qquad \left.\left.\left. - \sup\limits_{{\boldsymbol s}\in\mathcal{S}_i({\boldsymbol t})} \left\{ \sum\limits_{r\in\mathcal{P}_2(i)} \left[ A^{(n)}_{r_1}({\boldsymbol s}_{r},0)+ n\mu_{r_1}({\boldsymbol s}_{r}-{\boldsymbol s}_{r_+}) \right] \Pi_r \right\} \right\} + n\mu_i {\boldsymbol t}_i \right\} > nb\right) \\
&= \mathbb{P}\left( \exists\, {\boldsymbol t}_i<0,\,\, {\boldsymbol t}\in\mathcal{T}_i({\boldsymbol t}_i) : \forall\, {\boldsymbol s}\in\mathcal{S}_i({\boldsymbol t}) : \frac{1}{n} \left( A^{(n)}_i({\boldsymbol t}_i,0) + \sum\limits_{r\in\mathcal{P}_2(i)} \left[ A^{(n)}_{r_1}({\boldsymbol t}_{r},0) - A^{(n)}_{r_1}({\boldsymbol s}_{r},0) \right] \Pi_r \right) \right. \\
 &\qquad\qquad\qquad\qquad\qquad \left. > b - \mu_i {\boldsymbol t}_i -  \sum\limits_{r\in\mathcal{P}_2(i)} \Big[\mu_{r_1}\big({\boldsymbol t}_{r}-{\boldsymbol s}_{r}\big)-\mu_{r_1}\big({\boldsymbol t}_{r_+}-{\boldsymbol s}_{r_+}\big)\Big] \Pi_r \right) \\
 &= \mathbb{P}\left( \exists\, {\boldsymbol t}\in\mathcal{T}_i : \forall\, {\boldsymbol s}\in\mathcal{S}_i({\boldsymbol t}) : \frac{1}{n} \left( -A^{(n)}_i({\boldsymbol t}_i) - \sum\limits_{r\in\mathcal{P}_2(i)} \left[ A^{(n)}_{r_1}({\boldsymbol t}_{r}) - A^{(n)}_{r_1}({\boldsymbol s}_{r}) \right] \Pi_r \right) \right. \\
 &\qquad\qquad\qquad\qquad\qquad \left. > b - \mu_i {\boldsymbol t}_i -  \sum\limits_{r\in\mathcal{P}_2(i)} \Big[\mu_{r_1}\big({\boldsymbol t}_{r}-{\boldsymbol s}_{r}\big)-\mu_{r_1}\big({\boldsymbol t}_{r_+}-{\boldsymbol s}_{r_+}\big)\Big] \Pi_r \right) \\
 &= \mathbb{P}\left( \frac{A^{(n)}(\cdot)-n\lambda\, \cdot\,}{n} \in \tilde{\mathcal{E}}^i(b) \right),
\end{align*}
where
\begin{align*}
 &\tilde{\mathcal{E}}^i(b) \triangleq \Big\{ f\in\Omega^k: \exists\, {\boldsymbol t}\in\mathcal{T}_i : \forall\, {\boldsymbol s}\in\mathcal{S}_i({\boldsymbol t}),\,\, - f_i({\boldsymbol t}_i) - \sum\limits_{r\in\mathcal{P}_2(i)} \Big[f_{r_1}({\boldsymbol t}_{r}) - f_{r_1}({\boldsymbol s}_{r})\Big] \Pi_r \\
 &\qquad\qquad\qquad\qquad\qquad > b - (\mu_i-\lambda_i) {\boldsymbol t}_i -  \sum\limits_{r\in\mathcal{P}_2(i)} \Big[\big(\mu_{r_1}-\lambda_{r_1}\big)\big({\boldsymbol t}_{r}-{\boldsymbol s}_{r}\big)-\mu_{r_1}\big({\boldsymbol t}_{r_+}-{\boldsymbol s}_{r_+}\big)\Big] \Pi_r \Big\}.
\end{align*}
Since the centered Gaussian processes are symmetric, we have
\begin{align*}
  \mathbb{P}\left(Q^{(n)}_i > nb\right) &= \mathbb{P}\left( \frac{A^{(n)}(\cdot)-n\lambda\, \cdot\,}{n} \in \mathcal{E}^i(b) \right),
\end{align*}
where
\begin{align*}
 &\mathcal{E}^i(b) \triangleq \left\{ f\in\Omega^k: \exists\, {\boldsymbol t}\in\mathcal{T}_i : \forall\, {\boldsymbol s}\in\mathcal{S}_i({\boldsymbol t}),\,\, f_i({\boldsymbol t}_i) +\sum\limits_{r\in\mathcal{P}_2(i)} \Big[f_{r_1}({\boldsymbol t}_{r}) - f_{r_1}({\boldsymbol s}_{r})\Big] \Pi_r \right.\\
 &\qquad\qquad\qquad\qquad \left. > b - (\mu_i-\lambda_i) {\boldsymbol t}_i -  \sum\limits_{r\in\mathcal{P}_2(i)} \Big[\big(\mu_{r_1}-\lambda_{r_1}\big)\big({\boldsymbol t}_{r}-{\boldsymbol s}_{r}\big)-\mu_{r_1}\big({\boldsymbol t}_{r_+}-{\boldsymbol s}_{r_+}\big)\Big] \Pi_r \right\}.
\end{align*}
Finally, rearranging terms, and using that ${\boldsymbol s}_i=0$, we obtain
\begin{align*}
 &\mathcal{E}_i(b) \triangleq \left\{ f\in\Omega^k: \exists\, {\boldsymbol t}\in\mathcal{T}_i : \forall\, {\boldsymbol s}\in\mathcal{S}_i({\boldsymbol t}),\,\, f_i({\boldsymbol t}_i) + \sum\limits_{r\in\mathcal{P}_2(i)} \Big[f_{r_1}({\boldsymbol t}_{r}) - f_{r_1}({\boldsymbol s}_{r})\Big] \Pi_r \right. \\
 &\qquad\qquad\qquad\qquad\qquad\qquad\qquad \left. > b - \sum\limits_{r\in\mathcal{P}_1(i)} \left[ \left(\mu_{r_1}-\lambda_{r_1} - \sum\limits_{j\in\mathcal{N}_{\rm in}(r_1)} \mu_j p_{j,r_1}\right) \big({\boldsymbol t}_{r}-{\boldsymbol s}_{r}\big)\right] \Pi_r \right\}.
\end{align*}

\section{Proof of Theorem \ref{thm:acyclicUpperLD}}\label{app:acyclicUpperLD}
The proof consists of two steps. First, we decompose the event $\mathcal{E}_i(b)$ given in Theorem \ref{thm:probAsFunctOfInputs} as a union of intersections of simpler events that only involve the sample paths at fixed times, and we majorize the probability of the intersection by the probability of the least likely one (Lemma \ref{lem:miniLowerBound}). Then, we use Cram\'er's theorem to obtain the decay rate of the least likely of these simpler events by solving the additional quadratic optimization problem that arises by its application (Lemma \ref{lem:computeI}).

\begin{lemma}\label{lem:miniLowerBound}
  We have
\[  \inf\limits_{f\in \mathcal{E}_i(b)} \big\{ \mathbb{I}(f) \big\} \geq  \inf\limits_{{\boldsymbol t}\in\mathcal{T}_i} \sup\limits_{{\boldsymbol s}\in\mathcal{S}_i({\boldsymbol t})} \inf\limits_{f\in\mathcal{U}_{{\boldsymbol t},{\boldsymbol s}}} \big\{ \mathbb{I}(f) \big\}, \]
where
\begin{align*}
 \mathcal{U}_{{\boldsymbol t},{\boldsymbol s}} \triangleq \Big\{ f\in\Omega^k :& f_i({\boldsymbol t}_i) + \sum\limits_{r\in\mathcal{P}_2(i)} \Big[f_{r_1}({\boldsymbol t}_{r}) - f_{r_1}({\boldsymbol t}_{r}-{\boldsymbol t}_i)\Big] \Pi_r \geq b - \left(\mu_i-\overline\lambda_i\right) {\boldsymbol t}_i \:\:\text{and} \\
 &   f_i({\boldsymbol t}_i) + \sum\limits_{r\in\mathcal{P}_2(i)} \Big[f_{r_1}({\boldsymbol t}_{r}) - f_{r_1}({\boldsymbol s}_{r})\Big] \Pi_r \geq b - (\mu_i-\overline\lambda_i) {\boldsymbol t}_i - c_i({\boldsymbol t},{\boldsymbol s}) \Big\}.
\end{align*}
\end{lemma}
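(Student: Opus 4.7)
The plan is a set-inclusion argument followed by standard $\inf$–$\sup$ manipulations. Concretely, I would prove that
\[ \mathcal{E}_i(b) \;\subseteq\; \bigcup_{{\boldsymbol t}\in\mathcal{T}_i}\,\bigcap_{{\boldsymbol s}\in\mathcal{S}_i({\boldsymbol t})}\mathcal{U}_{{\boldsymbol t},{\boldsymbol s}}, \]
and then chain the inequalities
\[ \inf_{f\in\mathcal{E}_i(b)}\mathbb{I}(f)\;\geq\;\inf_{{\boldsymbol t}\in\mathcal{T}_i}\inf_{f\in\bigcap_{{\boldsymbol s}}\mathcal{U}_{{\boldsymbol t},{\boldsymbol s}}}\mathbb{I}(f)\;\geq\;\inf_{{\boldsymbol t}\in\mathcal{T}_i}\sup_{{\boldsymbol s}\in\mathcal{S}_i({\boldsymbol t})}\inf_{f\in\mathcal{U}_{{\boldsymbol t},{\boldsymbol s}}}\mathbb{I}(f), \]
the last step being the elementary fact that $\bigcap_{\boldsymbol s}\mathcal{U}_{{\boldsymbol t},{\boldsymbol s}}\subseteq\mathcal{U}_{{\boldsymbol t},{\boldsymbol s}_0}$ for any fixed ${\boldsymbol s}_0$.

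To establish the inclusion, I would take $f\in\mathcal{E}_i(b)$ and let ${\boldsymbol t}\in\mathcal{T}_i$ be the time vector guaranteed by the definition, so that the strict inequality in $\mathcal{E}_i(b)$ holds for every ${\boldsymbol s}\in\mathcal{S}_i({\boldsymbol t})$. The second defining inequality of $\mathcal{U}_{{\boldsymbol t},{\boldsymbol s}}$ is then just a rewriting of that inequality: using ${\boldsymbol s}_i=0$ one splits the sum over $\mathcal{P}_1(i)$ into the term $r=(i)$ and the sum over $\mathcal{P}_2(i)$, and one then checks directly from the definition of $c_i({\boldsymbol t},{\boldsymbol s})$ that
\[ b-\sum_{r\in\mathcal{P}_1(i)}\Big(\mu_{r_1}-\lambda_{r_1}-\sum_{j\in\mathcal{N}_{\rm in}(r_1)}\mu_j p_{j,r_1}\Big)({\boldsymbol t}_r-{\boldsymbol s}_r)\Pi_r \;=\; b-(\mu_i-\overline\lambda_i){\boldsymbol t}_i-c_i({\boldsymbol t},{\boldsymbol s}). \]
The first inequality of $\mathcal{U}_{{\boldsymbol t},{\boldsymbol s}}$ is obtained by specializing the universal quantifier over $\mathcal{S}_i({\boldsymbol t})$ to the particular vector ${\boldsymbol s}={\boldsymbol t}-{\boldsymbol t}_i$ (that is, ${\boldsymbol s}_r={\boldsymbol t}_r-{\boldsymbol t}_i$ for each $r$). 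I need to verify that this vector really belongs to $\mathcal{S}_i({\boldsymbol t})$: ${\boldsymbol s}_i={\boldsymbol t}_i-{\boldsymbol t}_i=0$, the condition ${\boldsymbol s}_r<{\boldsymbol s}_{r_+}$ is equivalent to ${\boldsymbol t}_r<{\boldsymbol t}_{r_+}$, which holds on $\mathcal{T}_i$, and ${\boldsymbol t}_r<{\boldsymbol s}_r$ is equivalent to ${\boldsymbol t}_i<0$, which also holds on $\mathcal{T}_i$. The transition from strict to non-strict inequality poses no problem.

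The algebraic heart of the argument is the path-telescoping identity
\[ \sum_{r\in\mathcal{P}_1(i)}\Big(\mu_{r_1}-\sum_{j\in\mathcal{N}_{\rm in}(r_1)}\mu_j p_{j,r_1}\Big)\Pi_r \;=\; \mu_i, \]
which I would prove by noting that the pairs $(r,j)$ with $r\in\mathcal{P}_1(i)$ and $j\in\mathcal{N}_{\rm in}(r_1)$ are in bijection with paths $r'=(j,r_1,\dots,r_{|r|})\in\mathcal{P}_2(i)$, under which $p_{j,r_1}\Pi_r=\Pi_{r'}$; the double sum therefore cancels with $\sum_{r\in\mathcal{P}_2(i)}\mu_{r_1}\Pi_r$, leaving only the contribution from the trivial path $r=(i)$. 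Together with $\overline\lambda_i=\sum_{r\in\mathcal{P}_1(i)}\lambda_{r_1}\Pi_r$, this identity turns the right-hand side of the $\mathcal{E}_i(b)$ inequality evaluated at ${\boldsymbol s}={\boldsymbol t}-{\boldsymbol t}_i$ into $b-(\mu_i-\overline\lambda_i){\boldsymbol t}_i$, exactly matching the first constraint of $\mathcal{U}_{{\boldsymbol t},{\boldsymbol s}}$.

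The main obstacle is the bookkeeping in these two algebraic reductions (especially keeping track of which sums are over $\mathcal{P}_1(i)$ versus $\mathcal{P}_2(i)$, and verifying the telescoping identity); once the set inclusion is in place, the passage to the variational bound is a one-line monotonicity argument.
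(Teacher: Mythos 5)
Your proposal is correct and matches the paper's argument in essence: decompose $\mathcal{E}_i(b)$ into $\bigcup_{\boldsymbol t}\bigcap_{\boldsymbol s}\mathcal{U}_{{\boldsymbol t},{\boldsymbol s}}$ and then apply the elementary $\inf$--$\sup$ manipulation, with the first constraint of $\mathcal{U}_{{\boldsymbol t},{\boldsymbol s}}$ coming from the specialization ${\boldsymbol s}={\boldsymbol t}-{\boldsymbol t}_i$. The only cosmetic difference is that you note ${\boldsymbol t}-{\boldsymbol t}_i\in\mathcal{S}_i({\boldsymbol t})$ directly for ${\boldsymbol t}\in\mathcal{T}_i$ and so avoid the paper's detour through the closure $\overline{\mathcal{S}_i({\boldsymbol t})}$, and you spell out the telescoping identity $\sum_{r\in\mathcal{P}_1(i)}\bigl(\mu_{r_1}-\sum_{j\in\mathcal{N}_{\rm in}(r_1)}\mu_j p_{j,r_1}\bigr)\Pi_r=\mu_i$, which the paper uses implicitly.
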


\begin{remark} {\em
Note that the first condition in the definition of the set $\mathcal{U}_{{\boldsymbol t},{\boldsymbol s}}$ is the same as the second one, but with ${\boldsymbol s}_r={\boldsymbol t}_r-{\boldsymbol t}_i$, for all $r\in\mathcal{P}_2(i)$. This generalizes Theorem 3.2 in \cite{MichelTandemPaper}, where an appropriate $\mathcal{U}_{{\boldsymbol t},{\boldsymbol s}}$ is defined by having the first condition being the same as the second one but with ${\boldsymbol s}_r=0$, for all $r\in\mathcal{P}_2(i)$. In the case of a tandem with arrivals only to the first queue, both definitions are equivalent.
}\end{remark}

\begin{proof}
Recall that
\begin{align*}
 &\mathcal{E}^i(b) \triangleq \left\{ f\in\Omega^k: \exists\, {\boldsymbol t}\in\mathcal{T}_i : \forall\, {\boldsymbol s}\in\mathcal{S}_i({\boldsymbol t}),\,\, f_i({\boldsymbol t}_i) +\sum\limits_{r\in\mathcal{P}_2(i)} \Big[f_{r_1}({\boldsymbol t}_{r}) - f_{r_1}({\boldsymbol s}_{r})\Big] \Pi_r > b - (\mu_i-\overline\lambda_i) {\boldsymbol t}_i - c_i({\boldsymbol t},{\boldsymbol s}) \right\}.
\end{align*}
Thus
\[ \mathcal{E}^i(b) = \bigcup\limits_{{\boldsymbol t}\in\mathcal{T}_i} \bigcap\limits_{{\boldsymbol s}\in\mathcal{S}_i({\boldsymbol t})} \mathcal{E}_{{\boldsymbol t},{\boldsymbol s}}, \]
where
\begin{align*}
 &\mathcal{E}_{{\boldsymbol t},{\boldsymbol s}} \triangleq \left\{ f\in\Omega^k:  f_i({\boldsymbol t}_i) +\sum\limits_{r\in\mathcal{P}_2(i)} \Big[f_{r_1}({\boldsymbol t}_{r}) - f_{r_1}({\boldsymbol s}_{r})\Big] \Pi_r > b - (\mu_i-\overline\lambda_i) {\boldsymbol t}_i - c_i({\boldsymbol t},{\boldsymbol s}) \right\}.
\end{align*}
Then, we have
\begin{equation}\label{eq:intermediateIInequality}
 \inf\limits_{f\in \mathcal{E}_i(b)} \big\{ \mathbb{I}(f) \big\} = \inf\limits_{{\boldsymbol t}\in\mathcal{T}_i} \inf\limits_{f\in \bigcap\limits_{{\boldsymbol s}\in\mathcal{S}_i({\boldsymbol t})} \mathcal{E}_{{\boldsymbol t},{\boldsymbol s}}} \big\{ \mathbb{I}(f) \big\}.
\end{equation}
Now fix ${\boldsymbol t}\in\mathcal{T}_i$, and consider the innermost infimum. Since $f$ is continuous, then
\[ f_i({\boldsymbol t}_i) + \sum\limits_{r\in\mathcal{P}_2(i)} \Big[f_{r_1}({\boldsymbol t}_{r}) - f_{r_1}({\boldsymbol s}_{r})\Big] \Pi_r > b - (\mu_i-\overline\lambda_i) {\boldsymbol t}_i - c_i({\boldsymbol t},{\boldsymbol s}) \]
for all ${\boldsymbol s}\in\mathcal{S}_i({\boldsymbol t})$ implies
\[ f_i({\boldsymbol t}_i) + \sum\limits_{r\in\mathcal{P}_2(i)} \Big[f_{r_1}({\boldsymbol t}_{r}) - f_{r_1}({\boldsymbol s}_{r})\Big] \Pi_r \geq b - (\mu_i-\overline\lambda_i) {\boldsymbol t}_i - c_i({\boldsymbol t},{\boldsymbol s}) \]
for all ${\boldsymbol s}\in\overline{\mathcal{S}_i({\boldsymbol t})}$. Hence
\[ \bigcap\limits_{{\boldsymbol s}\in\mathcal{S}_i({\boldsymbol t})} \mathcal{E}_{{\boldsymbol t},{\boldsymbol s}} \subset \bigcap\limits_{{\boldsymbol s}\in\overline{\mathcal{S}_i({\boldsymbol t})}} \mathcal{U}_{{\boldsymbol t},{\boldsymbol s}} \subset \mathcal{U}_{t_t,{\boldsymbol t},{\boldsymbol r}}, \]
for all ${\boldsymbol r}\in\mathcal{S}_i({\boldsymbol t})$, and thus
\[ \inf\limits_{f\in \bigcap\limits_{{\boldsymbol s}\in\mathcal{S}_i({\boldsymbol t})} \mathcal{E}_{{\boldsymbol t},{\boldsymbol s}}} \big\{ \mathbb{I}(f) \big\} \geq \inf\limits_{f\in\mathcal{U}_{{\boldsymbol t},{\boldsymbol r}}} \big\{ \mathbb{I}(f) \big\}. \]
Therefore,
\[ \inf\limits_{f\in \bigcap\limits_{{\boldsymbol s}\in\mathcal{S}_i({\boldsymbol t})} \mathcal{E}_{{\boldsymbol t},{\boldsymbol s}}} \big\{ \mathbb{I}(f) \big\} \geq \sup\limits_{{\boldsymbol r}\in\mathcal{S}_i({\boldsymbol t})} \inf\limits_{f\in\mathcal{U}_{{\boldsymbol t},{\boldsymbol r}}} \big\{ \mathbb{I}(f) \big\}. \]
Combining this with  \eqref{eq:intermediateIInequality} completes the proof.
\end{proof}

\begin{remark} {\em
  Note that, by taking the supremum over all ${\boldsymbol r}\in\mathcal{S}_i({\boldsymbol t})$ at the end of the proof, we are essentially upper bounding the probability of an intersection with the probability of the least likely event.
}\end{remark}

While we have made progress towards obtaining the desired expression for the limiting overflow probability, the expression in Lemma \ref{lem:miniLowerBound} still depends on the rate function $\mathbb{I}$. We now proceed to compute this simpler expression.

\begin{lemma}\label{lem:computeI}
Under Assumption \ref{ass:acyclic}, for ${\boldsymbol t}\in\mathcal{T}_i$, and ${\boldsymbol s}\in\mathcal{S}_i({\boldsymbol t})$, we have
\begin{equation*}
\inf\limits_{f\in\mathcal{U}_{{\boldsymbol t},{\boldsymbol s}}} \big\{ \mathbb{I}(f) \big\} =
\begin{cases}
\frac{\Big[ b - \left(\mu_i-\overline\lambda_i\right) {\boldsymbol t}_i \Big]^2}{2\,\Var \Big( \bar A_i({\boldsymbol t}-{\boldsymbol t}_i,{\boldsymbol t}) \Big)}, & \begin{array}{l}\text{if } k_b^i({\boldsymbol t},{\boldsymbol s}) < c_i({\boldsymbol t},{\boldsymbol s}),\\ \quad \text{or} \quad {\boldsymbol s}={\boldsymbol t}-{\boldsymbol t}_i,\end{array} \\
\frac{\Big[b - \left(\mu_i-\overline\lambda_i\right) {\boldsymbol t}_i - c_i({\boldsymbol t},{\boldsymbol s}) \Big]^2}{2\,\Var \Big( \bar A_i({\boldsymbol s},{\boldsymbol t}) \Big)}, & \begin{array}{l}\text{if } h_b^i({\boldsymbol t},{\boldsymbol s})> c_i({\boldsymbol t},{\boldsymbol s}),\end{array} \\
\frac{\Big[ b - \left(\mu_i-\overline\lambda_i\right) {\boldsymbol t}_i \Big]^2}{2\,\Var \Big( \bar A_i({\boldsymbol t}-{\boldsymbol t}_i,{\boldsymbol t}) \Big)} + \frac{\Big[ k_b^i({\boldsymbol t},{\boldsymbol s})- c_i({\boldsymbol t},{\boldsymbol s}) \Big]^2}{2\,\Var \Big( \bar A_i({\boldsymbol s},{\boldsymbol t}) \,\Big|\, \bar A_i({\boldsymbol t}-{\boldsymbol t}_i,{\boldsymbol t}) = b - \left(\mu_i-\overline\lambda_i\right) {\boldsymbol t}_i \Big)}, & \begin{array}{l}\text{otherwise.}\end{array}
\end{cases}
\end{equation*}
\end{lemma}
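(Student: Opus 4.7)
The plan is to recast the minimization of $\mathbb{I}(f) = \tfrac{1}{2}\|f\|_{\mathcal{R}^k}^2$ over $\mathcal{U}_{{\boldsymbol t},{\boldsymbol s}}$ as a Hilbert-space quadratic program with two linear half-space constraints, and solve it by a standard KKT analysis. Since $\mathbb{I} \equiv +\infty$ off $\mathcal{R}^k$ we restrict to $f \in \mathcal{R}^k$; the reproducing property $\langle f, \Sigma(t,\cdot) v\rangle_{\mathcal{R}^k} = v^\top f(t)$ then shows that the two constraint functionals
\begin{align*}
L_1(f) &\triangleq f_i({\boldsymbol t}_i) + \sum_{r\in\mathcal{P}_2(i)} \big[f_{r_1}({\boldsymbol t}_r) - f_{r_1}({\boldsymbol t}_r - {\boldsymbol t}_i)\big]\,\Pi_r,\\
L_2(f) &\triangleq f_i({\boldsymbol t}_i) + \sum_{r\in\mathcal{P}_2(i)} \big[f_{r_1}({\boldsymbol t}_r) - f_{r_1}({\boldsymbol s}_r)\big]\,\Pi_r
\end{align*}
are continuous and linear, with explicit Riesz representers $u_1, u_2 \in \mathcal{R}^k$ built from finite linear combinations of the kernel sections $\Sigma(\tau,\cdot)\,e_{r_1}$, where $e_j$ is the $j$-th standard basis vector of $\mathbb{R}^k$. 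Evaluating $L_j$ on the centered Gaussian process $\hat A(\cdot)$ with covariance $\Sigma$ produces $L_1(\hat A) = \bar A_i({\boldsymbol t}-{\boldsymbol t}_i,{\boldsymbol t})$ and $L_2(\hat A) = \bar A_i({\boldsymbol s},{\boldsymbol t})$, so that $\|u_j\|_{\mathcal{R}^k}^2 = \Var (L_j(\hat A))$ and $\langle u_1, u_2\rangle_{\mathcal{R}^k} = \Cov (L_1(\hat A), L_2(\hat A))$ recover exactly the variances and covariance appearing in the statement.

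With $\alpha \triangleq b - (\mu_i-\overline{\lambda}_i){\boldsymbol t}_i$ and $\beta \triangleq \alpha - c_i({\boldsymbol t},{\boldsymbol s})$, the problem reduces to
\[
\min_{f\in\mathcal{R}^k}\; \tfrac12 \|f\|_{\mathcal{R}^k}^2 \quad \text{subject to} \quad \langle f, u_1\rangle \geq \alpha,\;\; \langle f, u_2\rangle \geq \beta,
\]
a strictly convex Hilbert-space QP whose unique optimizer has the form $f^\star = \lambda u_1 + \mu u_2$ with $\lambda,\mu \geq 0$ and complementary slackness. I would then enumerate the three KKT active-set regimes. (i) If only the first constraint is active ($\mu=0$, $\lambda = \alpha/\|u_1\|^2$), the value is $\alpha^2/(2\|u_1\|^2)$; feasibility of the second constraint, $\alpha\langle u_1, u_2\rangle/\|u_1\|^2 \geq \beta$, translates via the identity $\bar A_i({\boldsymbol t}-{\boldsymbol t}_i,{\boldsymbol t}) = \bar A_i({\boldsymbol t}-{\boldsymbol t}_i,{\boldsymbol s}) + \bar A_i({\boldsymbol s},{\boldsymbol t})$ and the Gaussian conditional mean into $k_b^i({\boldsymbol t},{\boldsymbol s}) \leq c_i({\boldsymbol t},{\boldsymbol s})$. (ii) Symmetrically, only the second constraint active yields $\beta^2/(2\|u_2\|^2)$ under $h_b^i({\boldsymbol t},{\boldsymbol s}) \geq c_i({\boldsymbol t},{\boldsymbol s})$. (iii) Both active: solving $G(\lambda,\mu)^\top = (\alpha,\beta)^\top$ for the Gram matrix $G$ gives value $\tfrac12 (\alpha,\beta) G^{-1} (\alpha,\beta)^\top$, which after orthogonalising $u_2$ against $u_1$ rewrites as
\[
\frac{\alpha^2}{2\|u_1\|^2} + \frac{\big(\beta - \alpha\langle u_1, u_2\rangle/\|u_1\|^2\big)^2}{2\,\Var \big(L_2(\hat A)\mid L_1(\hat A) = \alpha\big)},
\]
in which the numerator of the second term equals $(k_b^i - c_i)^2$ by the same conditional-mean identity, and the denominator is exactly the conditional variance in the statement.

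The degenerate case ${\boldsymbol s} = {\boldsymbol t} - {\boldsymbol t}_i$ collapses everything: then $c_i({\boldsymbol t},{\boldsymbol t}-{\boldsymbol t}_i) = 0$, $\alpha = \beta$, and $L_1, L_2$ coincide term by term, so $u_1 = u_2$ and the two half-spaces are identical, placing us automatically in case (i). The main obstacle is neither the QP nor the Hilbert-space projection (both elementary) but the translation dictionary: matching the Lagrangian feasibility conditions to the relations between $k_b^i, h_b^i$ and $c_i$ through Gaussian conditioning, and verifying that on the boundaries $k_b^i = c_i$ and $h_b^i = c_i$ the case (iii) formula agrees continuously with the formulas of cases (i) and (ii), so the piecewise definition is consistent regardless of which regime one declares dominant.
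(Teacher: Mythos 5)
Your proposal is correct, and it takes a genuinely different route to the same finite-dimensional quadratic program. The paper proves the lemma by observing that $\mathcal{U}_{\boldsymbol t,\boldsymbol s}$ is an $\mathbb{I}$-continuity set, applying Schilder's theorem to convert $\inf_{f\in\mathcal{U}_{\boldsymbol t,\boldsymbol s}}\mathbb{I}(f)$ into the exponential decay rate of $\mathbb{P}(A^{(n)}/n\in\mathcal{U}_{\boldsymbol t,\boldsymbol s})$, noting that this probability involves only two jointly Gaussian variables, and then applying the bivariate Cram\'er theorem to obtain the two-variable rate function $\Lambda_{\boldsymbol t,\boldsymbol s}(y,z)$. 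You instead stay entirely inside the {\sc rkhs}: you represent the two half-space constraints via Riesz representers $u_1,u_2\in\mathcal{R}^k$, observe that the minimizer of $\frac12\|f\|^2_{\mathcal{R}^k}$ must lie in $\mathrm{span}\{u_1,u_2\}$, and reduce directly to the same two-variable quadratic program with Gram matrix entries $\|u_j\|^2=\Var(L_j(\hat A))$ and $\langle u_1,u_2\rangle=\Cov(L_1(\hat A),L_2(\hat A))$. From that point on, both arguments perform the same KKT case analysis: one active constraint gives the first or second branch, both active gives the third, and the feasibility conditions translate (via $\bar A_i(\boldsymbol t-\boldsymbol t_i,\boldsymbol t)=\bar A_i(\boldsymbol t-\boldsymbol t_i,\boldsymbol s)+\bar A_i(\boldsymbol s,\boldsymbol t)$ and Gaussian conditioning) exactly into the thresholds on $k_b^i$ and $h_b^i$ versus $c_i$. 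Your route is shorter and more self-contained — it avoids invoking Schilder and Cram\'er in opposite directions and shows intrinsically why the infinite-dimensional infimum collapses to a two-dimensional one — while the paper's route fits more naturally into the surrounding large-deviations narrative. Two points worth making explicit in a polished write-up: (a) nonnegativity of the KKT multipliers in case (iii) (the paper handles the analogous check via Claim~\ref{cla:oneImpossible}, which uses Cauchy--Schwarz to rule out the fourth active-set combination); and (b) the constant $\alpha=b-(\mu_i-\overline\lambda_i)\boldsymbol t_i$ is strictly positive by Assumption~\ref{ass:basic} and $\boldsymbol t_i<0$, which justifies that constraint~1 is always binding in case~(i). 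Your remark about continuity across the boundaries $k_b^i=c_i$ and $h_b^i=c_i$ correctly resolves the strict-vs-nonstrict sign discrepancy between your feasibility conditions and the piecewise definition in the statement.
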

\begin{proof}
Recall that
\[\mathbb{P}\left( \frac{A^{(n)}(\cdot)-n\lambda\, \cdot\,}{n} \in \mathcal{U}_{{\boldsymbol t},{\boldsymbol s}} \right)\]
can be rewritten as
\begin{align}
 \mathbb{P}\Bigg( \frac{1}{n}& \left( A^{(n)}_i({\boldsymbol t}_i) + \sum\limits_{r\in\mathcal{P}_2(i)} \left[ A^{(n)}_{r_1}({\boldsymbol t}_{r}) - A^{(n)}_{r_1}({\boldsymbol t}_{r}-{\boldsymbol t}_i) \right] \Pi_r \right) \geq b - \mu_i {\boldsymbol t}_i \qquad \text{and}  \label{eq:basicProbability} \\
   \frac{1}{n}& \left( A^{(n)}_i({\boldsymbol t}_i) + \sum\limits_{r\in\mathcal{P}_2(i)} \left[ A^{(n)}_{r_1}({\boldsymbol t}_{r}) - A^{(n)}_{r_1}({\boldsymbol s}_{r}) \right] \Pi_r \right) \geq b - \mu_i {\boldsymbol t}_i -  \sum\limits_{r\in\mathcal{P}_2(i)} \mu_{r_1}\Big[\big({\boldsymbol t}_{r}-{\boldsymbol t}_{r_+}\big)-\big({\boldsymbol s}_{r}-{\boldsymbol s}_{r_+}\big)\Big] \Pi_r \Bigg). \nonumber
\end{align}
Since this probability only depends on the state of the trajectories at fixed points in time, that is, only depends on a finite set of Gaussian random variables, it follows that $\mathcal{U}_{{\boldsymbol t},{\boldsymbol s}}$ is a $\mathbb{I}$-continuity set,
and thus Schilder's theorem implies that
\begin{equation}\label{eq:CramerBasic}
 -\lim\limits_{n\to\infty} \frac{1}{n} \log \mathbb{P}\left( \frac{A^{(n)}(\cdot)-n\lambda\, \cdot\,}{n} \in \mathcal{U}_{{\boldsymbol t},{\boldsymbol s}} \right)  = \inf\limits_{f\in\mathcal{U}_{{\boldsymbol t},{\boldsymbol s}}} \big\{ \mathbb{I}(f) \big\}.
\end{equation}
We now proceed to compute the left-hand side.

First, consider the exceptional case where ${\boldsymbol s}={\boldsymbol t}-{\boldsymbol t}_i$. Substituting this in   \eqref{eq:basicProbability}, we get
\begin{align}
 &\mathbb{P}\left( \frac{A^{(n)}(\cdot)-n\lambda\, \cdot\,}{n} \in \mathcal{U}_{{\boldsymbol t},{\boldsymbol s}} \right) = \mathbb{P}\left( \frac{1}{n} \left( A^{(n)}_i({\boldsymbol t}_i) + \sum\limits_{r\in\mathcal{P}_2(i)} \left[ A^{(n)}_{r_1}({\boldsymbol t}_{r}) - A^{(n)}_{r_1}({\boldsymbol t}_{r}-{\boldsymbol t}_i) \right] \Pi_r \right) \geq b - \mu_i {\boldsymbol t}_i \right). \label{eq:exceptionalProbability}
\end{align}
Moreover, by Cram\'er's theorem, we have that
\[ -\lim\limits_{n\to\infty} \frac{1}{n} \log \mathbb{P}\left( \frac{1}{n} \left( A^{(n)}_i({\boldsymbol t}_i) + \sum\limits_{r\in\mathcal{P}_2(i)} \left[ A^{(n)}_{r_1}({\boldsymbol t}_{r}) - A^{(n)}_{r_1}({\boldsymbol t}_{r}-{\boldsymbol t}_i) \right] \Pi_r \right) \geq b - \mu_i {\boldsymbol t}_i \right)  = \frac{\Big[ b - \left(\mu_i-\overline\lambda_i\right) {\boldsymbol t}_i \Big]^2}{2\,\Var \left( \bar A_i({\boldsymbol t}-{\boldsymbol t}_i,{\boldsymbol t}) \right)}. \]
Combining this with   \eqref{eq:CramerBasic} and \eqref{eq:exceptionalProbability}, we obtain
\[ \inf\limits_{f\in\mathcal{U}_{{\boldsymbol t},{\boldsymbol s}}} \big\{ \mathbb{I}(f) \big\} = \frac{\Big[ b - \left(\mu_i-\overline\lambda_i\right) {\boldsymbol t}_i \Big]^2}{2\,\Var \left( \bar A_i({\boldsymbol t}-{\boldsymbol t}_i,{\boldsymbol t}) \right)}. \]

Now consider the case when ${\boldsymbol s}\neq{\boldsymbol t}-{\boldsymbol t}_i$. By the multivariate version of Cram\'er Theorem, we have that
\begin{align*}
 \lim\limits_{n\to\infty} \frac{1}{n} \log \mathbb{P}\Bigg( &\frac{1}{n} \left( A^{(n)}_i({\boldsymbol t}_i) + \sum\limits_{r\in\mathcal{P}_2(i)} \left[ A^{(n)}_{r_1}({\boldsymbol t}_{r}) - A^{(n)}_{r_1}({\boldsymbol s}_{r}) \right] \Pi_r \right) \geq \\& \qquad  \qquad b - \mu_i {\boldsymbol t}_i - \sum\limits_{r\in\mathcal{P}_2(i)} \mu_{r_1}\Big[\big({\boldsymbol t}_{r}-{\boldsymbol t}_{r_+}\big)-\big({\boldsymbol s}_{r}-{\boldsymbol s}_{r_+}\big)\Big] \Pi_r, \\
     & \frac{1}{n} \left( A^{(n)}_i({\boldsymbol t}_i) + \sum\limits_{r\in\mathcal{P}_2(i)} \left[ A^{(n)}_{r_1}({\boldsymbol t}_{r}) - A^{(n)}_{r_1}({\boldsymbol t}_{r}-{\boldsymbol t}_i) \right] \Pi_r \right) \geq b - \mu_i {\boldsymbol t}_i \Bigg) \\
 &\qquad \qquad = \inf\Big\{ \Lambda_{{\boldsymbol t},{\boldsymbol s}}(y,z) : y\geq b - \left(\mu_i-\overline\lambda_i\right) {\boldsymbol t}_i;\,\, z\geq b - (\mu_i-\overline\lambda_i) {\boldsymbol t}_i - c_i({\boldsymbol t},{\boldsymbol s}) \Big\},
\end{align*}
where
\begin{equation}\label{eq:Lambda}
 \Lambda_{{\boldsymbol t},{\boldsymbol s}}(y,z) \triangleq \frac{1}{2}\left(y,\, z\right)\begin{pmatrix}
 \Var \Big(\bar A_i({\boldsymbol t}-{\boldsymbol t}_i,{\boldsymbol t}) \Big) & {\mathbb C}{\rm ov}\left( \bar A_i({\boldsymbol t}-{\boldsymbol t}_i,{\boldsymbol t}),\,\, \bar A_i({\boldsymbol s},{\boldsymbol t}) \right) \\
{\mathbb C}{\rm ov}\left( \bar A_i({\boldsymbol t}-{\boldsymbol t}_i,{\boldsymbol t}),\,\, \bar A_i({\boldsymbol s},{\boldsymbol t}) \right) & \Var \left( \bar A_i({\boldsymbol s},{\boldsymbol t}) \right)
\end{pmatrix}^{-1}\left(y,\, z\right)^\top.
\end{equation}
Combining this with   \eqref{eq:basicProbability} and \eqref{eq:CramerBasic}, we get that
\begin{align}\label{eq:infInf}
 \inf\limits_{f\in\mathcal{U}_{{\boldsymbol t},{\boldsymbol s}}} \big\{ \mathbb{I}(f) \big\} &= \inf\Big\{ \Lambda_{{\boldsymbol t},{\boldsymbol s}}(y,z) : y\geq b - \left(\mu_i-\overline\lambda_i\right) {\boldsymbol t}_i;\,\, z\geq b - (\mu_i-\overline\lambda_i) {\boldsymbol t}_i - c_i({\boldsymbol t},{\boldsymbol s}) \Big\}.
\end{align}
Since $\Lambda_{{\boldsymbol t},{\boldsymbol s}}$ is quadratic and the constraints are linear, it follows by standard calculus  that the optimal values of $y$ and $z$ are
\begin{align}\label{eq:yStar}
 y^* &\triangleq \max\left\{ b - (\mu_i-\overline\lambda_i) {\boldsymbol t}_i,\,\, \left[\frac{{\mathbb C}{\rm ov}\left( \bar A_i({\boldsymbol t}-{\boldsymbol t}_i,{\boldsymbol t}),\,\, \bar A_i({\boldsymbol s},{\boldsymbol t}) \right)}{\Var \left( \bar A_i({\boldsymbol s},{\boldsymbol t}) \right)} \right]z^* \right\}.
\end{align}
and
\begin{align}\label{eq:zStar}
 z^* &\triangleq \max\left\{ b - (\mu_i-\overline\lambda_i) {\boldsymbol t}_i - c_i({\boldsymbol t},{\boldsymbol s}),\,\, \left[\frac{{\mathbb C}{\rm ov}\left( \bar A_i({\boldsymbol t}-{\boldsymbol t}_i,{\boldsymbol t}),\,\, \bar A_i({\boldsymbol s},{\boldsymbol t}) \right)}{\Var \left( \bar A_i({\boldsymbol t}-{\boldsymbol t}_i,{\boldsymbol t}) \right)}\right] y^* \right\},
\end{align}
respectively. Although this gives four possible combinations for $(y^*,z^*)$, the following lemma states that one of them is not possible.

\begin{claim}\label{cla:oneImpossible}
For all ${\boldsymbol t}\in\mathcal{T}_i$ and ${\boldsymbol s}\in\mathcal{S}_i({\boldsymbol t})$ such that ${\boldsymbol s}\neq{\boldsymbol t}-{\boldsymbol t}_i$, we have that
\[ y^* = b - \left(\mu_i-\overline\lambda_i\right) {\boldsymbol t}_i, \qquad \text{and/or} \qquad  z^*= b - (\mu_i-\overline\lambda_i) {\boldsymbol t}_i - c_i({\boldsymbol t},{\boldsymbol s}). \]
\end{claim}
\begin{proof}
Suppose that
\begin{equation}\label{eq:yStar}
 y^* = \left[\frac{{\mathbb C}{\rm ov}\left( \bar A_i({\boldsymbol t}-{\boldsymbol t}_i,{\boldsymbol t}),\,\, \bar A_i({\boldsymbol s},{\boldsymbol t}) \right)}{\Var \left( \bar A_i({\boldsymbol s},{\boldsymbol t}) \right)} \right]z^* > b - \left(\mu_i-\overline\lambda_i\right) {\boldsymbol t}_i,
\end{equation}
and that
\begin{align*}
 z^* &= \left[\frac{{\mathbb C}{\rm ov}\left( \bar A_i({\boldsymbol t}-{\boldsymbol t}_i,{\boldsymbol t}),\,\, \bar A_i({\boldsymbol s},{\boldsymbol t}) \right)}{\Var \left( \bar A_i({\boldsymbol t}-{\boldsymbol t}_i,{\boldsymbol t}) \right)}\right] y^* >  b - (\mu_i-\overline\lambda_i) {\boldsymbol t}_i - c_i({\boldsymbol t},{\boldsymbol s}).
\end{align*}
Then, we have
\[ y^* = \left[\frac{{\mathbb C}{\rm ov}\left( \bar A_i({\boldsymbol t}-{\boldsymbol t}_i,{\boldsymbol t}),\,\, \bar A_i({\boldsymbol s},{\boldsymbol t}) \right)^2}{\Var \left( \bar A_i({\boldsymbol t}-{\boldsymbol t}_i,{\boldsymbol t}) \right) \Var \left( \bar A_i({\boldsymbol s},{\boldsymbol t}) \right)}\right] y^*, \]
which is impossible because the Cauchy-Schwarz inequality implies that
\begin{equation}\label{eq:CS}
 \frac{{\mathbb C}{\rm ov}\left( \bar A_i({\boldsymbol t}-{\boldsymbol t}_i,{\boldsymbol t}),\,\, \bar A_i({\boldsymbol s},{\boldsymbol t}) \right)^2}{\Var \left( \bar A_i({\boldsymbol t}-{\boldsymbol t}_i,{\boldsymbol t}) \right) \Var \left( \bar A_i({\boldsymbol s},{\boldsymbol t}) \right)} <1,
\end{equation}
for all ${\boldsymbol t}$, and ${\boldsymbol s}$ such that ${\boldsymbol s}\neq {\boldsymbol t}-{\boldsymbol t}_i$.
\end{proof}

Combining Claim \ref{cla:oneImpossible} with   \eqref{eq:yStar} and \eqref{eq:zStar}, we conclude that
$z^* > b - (\mu_i-\overline\lambda_i) {\boldsymbol t}_i - c_i({\boldsymbol t},{\boldsymbol s}) $
if and only if
\begin{align*}
  &b - (\mu_i-\overline\lambda_i) {\boldsymbol t}_i - c_i({\boldsymbol t},{\boldsymbol s}) < \left[\frac{{\mathbb C}{\rm ov}\left( \bar A_i({\boldsymbol t}-{\boldsymbol t}_i,{\boldsymbol t}),\,\, \bar A_i({\boldsymbol s},{\boldsymbol t}) \right)}{\Var \left( \bar A_i({\boldsymbol t}-{\boldsymbol t}_i,{\boldsymbol t}) \right)} \right] \Big[ b - \left(\mu_i-\overline\lambda_i\right) {\boldsymbol t}_i \Big],
\end{align*}
which is equivalent to
\begin{align*}
  & c_i({\boldsymbol t},{\boldsymbol s}) > \left[\frac{{\mathbb C}{\rm ov}\left( \bar A_i({\boldsymbol t}-{\boldsymbol t}_i,{\boldsymbol t}),\,\, \bar A_i({\boldsymbol t}-{\boldsymbol t}_i,{\boldsymbol s}) \right)}{\Var \left( \bar A_i({\boldsymbol t}-{\boldsymbol t}_i,{\boldsymbol t}) \right)}\right] \Big[ b - \left(\mu_i-\overline\lambda_i\right) {\boldsymbol t}_i \Big] = k_b^i({\boldsymbol t},{\boldsymbol s}).
\end{align*}
In that case, substituting the optimal values
\begin{align*}y^*&= b - (\mu_i-\overline\lambda_i) {\boldsymbol t}_i ,\\
z^*&= \left[\frac{{\mathbb C}{\rm ov}\left( \bar A_i({\boldsymbol t}-{\boldsymbol t}_i,{\boldsymbol t}),\,\, \bar A_i({\boldsymbol s},{\boldsymbol t}) \right)}{\Var \left( \bar A_i({\boldsymbol t}-{\boldsymbol t}_i,{\boldsymbol t}) \right)} \right] \Big[ b - \left(\mu_i-\overline\lambda_i\right) {\boldsymbol t}_i \Big] \end{align*}
in \eqref{eq:Lambda}, we obtain
\begin{align*}
 \Lambda_{{\boldsymbol t},{\boldsymbol s}}(y^*,z^*) &= \frac{{y^*}^2 \Var \left( \bar A_i({\boldsymbol s},{\boldsymbol t}) \right) - 2 y^*z^* {\mathbb C}{\rm ov}\left( \bar A_i({\boldsymbol t}-{\boldsymbol t}_i,{\boldsymbol t}),\,\, \bar A_i({\boldsymbol s},{\boldsymbol t}) \right) + {z^*}^2 \Var \left( \bar A_i({\boldsymbol t}-{\boldsymbol t}_i,{\boldsymbol t}) \right) }{2\left[ \Var \left( \bar A_i({\boldsymbol t}-{\boldsymbol t}_i,{\boldsymbol t}) \right) \Var \left( \bar A_i({\boldsymbol s},{\boldsymbol t}) \right) - {\mathbb C}{\rm ov}\left( \bar A_i({\boldsymbol t}-{\boldsymbol t}_i,{\boldsymbol t}),\,\, \bar A_i({\boldsymbol s},{\boldsymbol t}) \right)^2 \right]} \\
 &= \frac{ \left[\Var \left( \bar A_i({\boldsymbol s},{\boldsymbol t}) \right) - 2 \frac{{\mathbb C}{\rm ov}\left( \bar A_i({\boldsymbol t}-{\boldsymbol t}_i,{\boldsymbol t}),\,\, \bar A_i({\boldsymbol s},{\boldsymbol t}) \right)^2}{\Var \left( \bar A_i({\boldsymbol t}-{\boldsymbol t}_i,{\boldsymbol t}) \right)} + \frac{{\mathbb C}{\rm ov}\left( \bar A_i({\boldsymbol t}-{\boldsymbol t}_i,{\boldsymbol t}),\,\, \bar A_i({\boldsymbol s},{\boldsymbol t}) \right)^2}{\Var \left( \bar A_i({\boldsymbol t}-{\boldsymbol t}_i,{\boldsymbol t}) \right)}\right]\Big[ b - \left(\mu_i -\overline\lambda_i\right){\boldsymbol t}_i \Big]^2}{2\left[ \Var \left( \bar A_i({\boldsymbol t}-{\boldsymbol t}_i,{\boldsymbol t}) \right) \Var \left( \bar A_i({\boldsymbol s},{\boldsymbol t}) \right) - {\mathbb C}{\rm ov}\left( \bar A_i({\boldsymbol t}-{\boldsymbol t}_i,{\boldsymbol t}),\,\, \bar A_i({\boldsymbol s},{\boldsymbol t}) \right)^2 \right]}  \\
 &= \frac{\Big[ b - (\mu_i-\overline\lambda_i) {\boldsymbol t}_i \Big]^2}{2\,\Var \left( \bar A_i({\boldsymbol t}-{\boldsymbol t}_i,{\boldsymbol t}) \right)}.
\end{align*}
Combining this with  \eqref{eq:infInf} we get that, if
\begin{equation}\label{eq:kCondition}
 k_b^i({\boldsymbol t},{\boldsymbol s}) < c_i({\boldsymbol t},{\boldsymbol s}),
\end{equation}
then
\[ \inf\limits_{f\in\mathcal{U}_{{\boldsymbol t},{\boldsymbol s}}} \big\{ \mathbb{I}(f) \big\} = \frac{\Big[ b - (\mu_i-\overline\lambda_i) {\boldsymbol t}_i \Big]^2}{2\,\Var \left( \bar A_i({\boldsymbol t}-{\boldsymbol t}_i,{\boldsymbol t}) \right)}. \]

On the other hand, combining Claim \ref{cla:oneImpossible} with equations \eqref{eq:yStar} and \eqref{eq:zStar}, we also get that
\[ y^* > b - (\mu_i-\overline\lambda_i) {\boldsymbol t}_i \]
if and only if
\begin{align*}
  & \left[\frac{{\mathbb C}{\rm ov}\left( \bar A_i({\boldsymbol t}-{\boldsymbol t}_i,{\boldsymbol t}),\,\, \bar A_i({\boldsymbol s},{\boldsymbol t}) \right)}{\Var \left( \bar A_i({\boldsymbol s},{\boldsymbol t}) \right)} \right] \left[ b - (\mu_i-\overline\lambda_i) {\boldsymbol t}_i - c_i({\boldsymbol t},{\boldsymbol s}) \right] > b - \left(\mu_i-\overline\lambda_i\right) {\boldsymbol t}_i,
\end{align*}
which is equivalent to
\begin{align*}
  & c_i({\boldsymbol t},{\boldsymbol s}) < \left[\frac{{\mathbb C}{\rm ov}\left( \bar A_i({\boldsymbol s},{\boldsymbol t}),\,\, \bar A_i({\boldsymbol t}-{\boldsymbol t}_i,{\boldsymbol s}) \right)}{\Var \left( \bar A_i({\boldsymbol s},{\boldsymbol t}) \right)}\right] \Big[ b - \left(\mu_i-\overline\lambda_i\right) {\boldsymbol t}_i - c_i({\boldsymbol t},{\boldsymbol s}) \Big] = h_b^i({\boldsymbol t},{\boldsymbol s}).
\end{align*}
In that case, substituting the optimal values
\begin{align*} z^*&= b - \left(\mu_i-\overline\lambda_i\right) {\boldsymbol t}_i - c_i({\boldsymbol t},{\boldsymbol s}) ,\\
 y^*&= \left[\frac{{\mathbb C}{\rm ov}\left( \bar A_i({\boldsymbol t}-{\boldsymbol t}_i,{\boldsymbol t}),\,\, \bar A_i({\boldsymbol s},{\boldsymbol t}) \right)}{\Var \left( \bar A_i({\boldsymbol s},{\boldsymbol t}) \right)} \right] \left[ b - (\mu_i-\overline\lambda_i) {\boldsymbol t}_i - c_i({\boldsymbol t},{\boldsymbol s}) \right] \end{align*}
in \eqref{eq:Lambda}, we obtain that $\Lambda_{{\boldsymbol t},{\boldsymbol s}}(y^*,z^*)$ equals
\begin{align*}
&\frac{{y^*}^2 \Var \left( \bar A_i({\boldsymbol s},{\boldsymbol t}) \right) - 2 y^*z^* {\mathbb C}{\rm ov}\left( \bar A_i({\boldsymbol t}-{\boldsymbol t}_i,{\boldsymbol t}),\,\, \bar A_i({\boldsymbol s},{\boldsymbol t}) \right) + {z^*}^2 \Var \left( \bar A_i({\boldsymbol t}-{\boldsymbol t}_i,{\boldsymbol t}) \right) }{2\left[ \Var \left( \bar A_i({\boldsymbol t}-{\boldsymbol t}_i,{\boldsymbol t}) \right) \Var \left( \bar A_i({\boldsymbol s},{\boldsymbol t}) \right) - {\mathbb C}{\rm ov}\left( \bar A_i({\boldsymbol t}-{\boldsymbol t}_i,{\boldsymbol t}),\,\, \bar A_i({\boldsymbol s},{\boldsymbol t}) \right)^2 \right]} \\
 &= \frac{ \left[ \frac{{\mathbb C}{\rm ov}\left( \bar A_i({\boldsymbol t}-{\boldsymbol t}_i,{\boldsymbol t}),\,\, \bar A_i({\boldsymbol s},{\boldsymbol t}) \right)^2}{\Var \left( \bar A_i({\boldsymbol s},{\boldsymbol t}) \right)} - 2 \frac{{\mathbb C}{\rm ov}\left( \bar A_i({\boldsymbol t}-{\boldsymbol t}_i,{\boldsymbol t}),\,\, \bar A_i({\boldsymbol s},{\boldsymbol t}) \right)^2}{\Var \left( \bar A_i({\boldsymbol s},{\boldsymbol t}) \right)} + \Var \left( \bar A_i({\boldsymbol t}-{\boldsymbol t}_i,{\boldsymbol t}) \right)\right]\Big[ b - \left(\mu_i -\overline\lambda_i\right){\boldsymbol t}_i - c_i({\boldsymbol t},{\boldsymbol s}) \Big]^2}{2\left[ \Var \left( \bar A_i({\boldsymbol t}-{\boldsymbol t}_i,{\boldsymbol t}) \right) \Var \left( \bar A_i({\boldsymbol s},{\boldsymbol t}) \right) - {\mathbb C}{\rm ov}\left( \bar A_i({\boldsymbol t}-{\boldsymbol t}_i,{\boldsymbol t}),\,\, \bar A_i({\boldsymbol s},{\boldsymbol t}) \right)^2 \right]}  \\
 &=\frac{\left[ b - (\mu_i-\overline\lambda_i) {\boldsymbol t}_i - c_i({\boldsymbol t},{\boldsymbol s}) \right]^2}{2\,\Var \left( \bar A_i({\boldsymbol s},{\boldsymbol t}) \right)}.
\end{align*}
Combining this with   \eqref{eq:infInf} we get that, if
\begin{equation}\label{eq:hCondition}
 h_b^i({\boldsymbol t},{\boldsymbol s}) > c_i({\boldsymbol t},{\boldsymbol s}),
\end{equation}
then
\[ \inf\limits_{f\in\mathcal{U}_{{\boldsymbol t},{\boldsymbol s}}} \big\{ \mathbb{I}(f) \big\} = \frac{\left[ b - (\mu_i-\overline\lambda_i) {\boldsymbol t}_i - c_i({\boldsymbol t},{\boldsymbol s}) \right]^2}{2\,\Var \left( \bar A_i({\boldsymbol s},{\boldsymbol t}) \right)}. \]

Finally, if neither   \eqref{eq:kCondition} nor   \eqref{eq:hCondition} hold, Claim \ref{cla:oneImpossible} implies that
\begin{align*} y^*&= b - (\mu_i-\overline\lambda_i) {\boldsymbol t}_i,\\
 z^* &= b - (\mu_i-\overline\lambda_i) {\boldsymbol t}_i - c_i({\boldsymbol t},{\boldsymbol s}). \end{align*}
Combining this with   \eqref{eq:infInf}, we obtain that $\Lambda_{{\boldsymbol t},{\boldsymbol s}}(y^*,z^*) $ equals
\begin{align*}
 & \frac{{y^*}^2 \Var \left( \bar A_i({\boldsymbol s},{\boldsymbol t}) \right) - 2 y^*z^* {\mathbb C}{\rm ov}\left( \bar A_i({\boldsymbol t}-{\boldsymbol t}_i,{\boldsymbol t}),\,\, \bar A_i({\boldsymbol s},{\boldsymbol t}) \right) + {z^*}^2 \Var \left( \bar A_i({\boldsymbol t}-{\boldsymbol t}_i,{\boldsymbol t}) \right) }{2\left[ \Var \left( \bar A_i({\boldsymbol t}-{\boldsymbol t}_i,{\boldsymbol t}) \right) \Var \left( \bar A_i({\boldsymbol s},{\boldsymbol t}) \right) - {\mathbb C}{\rm ov}\left( \bar A_i({\boldsymbol t}-{\boldsymbol t}_i,{\boldsymbol t}),\,\, \bar A_i({\boldsymbol s},{\boldsymbol t}) \right)^2 \right]} \\
 &= \frac{{y^*}^2 \Var \left( \bar A_i({\boldsymbol s},{\boldsymbol t}) \right) \Var \left( \bar A_i({\boldsymbol t}-{\boldsymbol t}_i,{\boldsymbol t}) \right) - 2 y^*z^* {\mathbb C}{\rm ov}\left( \bar A_i({\boldsymbol t}-{\boldsymbol t}_i,{\boldsymbol t}),\,\, \bar A_i({\boldsymbol s},{\boldsymbol t}) \right) \Var \left( \bar A_i({\boldsymbol t}-{\boldsymbol t}_i,{\boldsymbol t}) \right) + {z^*}^2 \Var \left( \bar A_i({\boldsymbol t}-{\boldsymbol t}_i,{\boldsymbol t}) \right)^2 }{2\left[ \Var \left( \bar A_i({\boldsymbol t}-{\boldsymbol t}_i,{\boldsymbol t}) \right) \Var \left( \bar A_i({\boldsymbol s},{\boldsymbol t}) \right) - {\mathbb C}{\rm ov}\left( \bar A_i({\boldsymbol t}-{\boldsymbol t}_i,{\boldsymbol t}),\,\, \bar A_i({\boldsymbol s},{\boldsymbol t}) \right)^2 \right] \Var \left( \bar A_i({\boldsymbol t}-{\boldsymbol t}_i,{\boldsymbol t}) \right)} \\
 &= \frac{{y^*}^2}{2\,\Var \Big( \bar A_i({\boldsymbol t}-{\boldsymbol t}_i,{\boldsymbol t}) \Big)} + \frac{\left[ z^* \Var \left( \hat A_i({\boldsymbol t}-{\boldsymbol t}_i,{\boldsymbol t}) \right) - y^* {\mathbb C}{\rm ov}\left( \bar A_i({\boldsymbol t}-{\boldsymbol t}_i,{\boldsymbol t}),\,\, \bar A_i({\boldsymbol s},{\boldsymbol t}) \right) \right]^2 }{2\left[ \Var \left( \bar A_i({\boldsymbol t}-{\boldsymbol t}_i,{\boldsymbol t}) \right) \Var \left( \bar A_i({\boldsymbol s},{\boldsymbol t}) \right) - {\mathbb C}{\rm ov}\left( \bar A_i({\boldsymbol t}-{\boldsymbol t}_i,{\boldsymbol t}),\,\, \bar A_i({\boldsymbol s},{\boldsymbol t}) \right)^2 \right] \Var \left( \hat A_i({\boldsymbol t}-{\boldsymbol t}_i,{\boldsymbol t}) \right)} \\
 & = \frac{\Big[ b - \left(\mu_i-\overline\lambda_i\right) {\boldsymbol t}_i \Big]^2}{2\,\Var \Big( \bar A_i({\boldsymbol t}-{\boldsymbol t}_i,{\boldsymbol t}) \Big)} + \frac{\left[ \big[ b - \left(\mu_i-\overline\lambda_i\right) {\boldsymbol t}_i \big] {\mathbb C}{\rm ov}\left( \bar A_i({\boldsymbol t}-{\boldsymbol t}_i,{\boldsymbol t}),\,\, \bar A_i({\boldsymbol t}-{\boldsymbol t}_i,{\boldsymbol s}) \right) - c_i({\boldsymbol t},{\boldsymbol s}) \Var \left( \hat A_i({\boldsymbol t}-{\boldsymbol t}_i,{\boldsymbol t}) \right) \right]^2 }{2\left[ \Var \left( \bar A_i({\boldsymbol t}-{\boldsymbol t}_i,{\boldsymbol t}) \right) \Var \left( \bar A_i({\boldsymbol s},{\boldsymbol t}) \right) - {\mathbb C}{\rm ov}\left( \bar A_i({\boldsymbol t}-{\boldsymbol t}_i,{\boldsymbol t}),\,\, \bar A_i({\boldsymbol s},{\boldsymbol t}) \right)^2 \right] \Var \left( \hat A_i({\boldsymbol t}-{\boldsymbol t}_i,{\boldsymbol t}) \right)} \\
 & = \frac{\Big[ b - \left(\mu_i-\overline\lambda_i\right) {\boldsymbol t}_i \Big]^2}{2\,\Var \Big( \bar A_i({\boldsymbol t}-{\boldsymbol t}_i,{\boldsymbol t}) \Big)} + \frac{\left[ \frac{{\mathbb C}{\rm ov}\left( \bar A_i({\boldsymbol t}-{\boldsymbol t}_i,{\boldsymbol t}),\,\, \bar A_i({\boldsymbol t}-{\boldsymbol t}_i,{\boldsymbol s}) \right)}{\Var \left( \hat A_i({\boldsymbol t}-{\boldsymbol t}_i,{\boldsymbol t}) \right)} [b - (\mu_i-\overline\lambda_i) {\boldsymbol t}_i]  - c_i({\boldsymbol t},{\boldsymbol s}) \right]^2 }{2\left[ 1 - \frac{{\mathbb C}{\rm ov}\left( \bar A_i({\boldsymbol t}-{\boldsymbol t}_i,{\boldsymbol t}),\,\, \bar A_i({\boldsymbol s},{\boldsymbol t}) \right)^2}{\Var \left( \bar A_i({\boldsymbol t}-{\boldsymbol t}_i,{\boldsymbol t}) \right) \Var \left( \bar A_i({\boldsymbol s},{\boldsymbol t}) \right)} \right]  \Var \left( \bar A_i({\boldsymbol s},{\boldsymbol t}) \right) } \\
 & =\frac{\Big[ b - \left(\mu_i-\overline\lambda_i\right) {\boldsymbol t}_i \Big]^2}{2\,\Var \Big( \bar A_i({\boldsymbol t}-{\boldsymbol t}_i,{\boldsymbol t}) \Big)} + \frac{\Big[ k_b^i({\boldsymbol t},{\boldsymbol s})- c_i({\boldsymbol t},{\boldsymbol s}) \Big]^2}{2\,\Var \Big( \bar A_i({\boldsymbol s},{\boldsymbol t}) \,\Big|\, \bar A_i({\boldsymbol t}-{\boldsymbol t}_i,{\boldsymbol t}) = b - \left(\mu_i-\overline\lambda_i\right) {\boldsymbol t}_i \Big)}.
\end{align*}
Combining this with  \eqref{eq:infInf} we get that, if
\begin{equation*}
  k_b^i({\boldsymbol t},{\boldsymbol s}) \geq c_i({\boldsymbol t},{\boldsymbol s}) \qquad \text{and} \qquad h_b^i({\boldsymbol t},{\boldsymbol s}) \leq c_i({\boldsymbol t},{\boldsymbol s}),
\end{equation*}
then
\begin{align*}
 \inf\limits_{f\in\mathcal{U}_{{\boldsymbol t},{\boldsymbol s}}} \big\{ \mathbb{I}(f) \big\} & = \frac{\Big[ b - \left(\mu_i-\overline\lambda_i\right) {\boldsymbol t}_i \Big]^2}{2\,\Var \Big( \bar A_i({\boldsymbol t}-{\boldsymbol t}_i,{\boldsymbol t}) \Big)} + \frac{\Big[ k_b^i({\boldsymbol t},{\boldsymbol s})- c_i({\boldsymbol t},{\boldsymbol s}) \Big]^2}{2\,\Var \Big( \bar A_i({\boldsymbol s},{\boldsymbol t}) \,\Big|\, \bar A_i({\boldsymbol t}-{\boldsymbol t}_i,{\boldsymbol t}) = b - \left(\mu_i-\overline\lambda_i\right) {\boldsymbol t}_i \Big)},
\end{align*}
as desired
\end{proof}

Combining Lemmas \ref{lem:miniLowerBound} and \ref{lem:computeI} concludes the proof of Theorem \ref{thm:acyclicUpperLD}.

\section{Proof of Theorem \ref{thm:acyclicLowerLD1}}\label{app:acyclicLowerLD1}

Given Theorem \ref{thm:acyclicUpperLD}, it is enough to show that, if
\begin{equation}\label{eq:auxK}
 k_b^i\left({\boldsymbol t}^*,{\boldsymbol s}\right) < c_i\left({\boldsymbol t}^*,{\boldsymbol s}\right),
\end{equation}
for all ${\boldsymbol s}\in\mathcal{S}_i({\boldsymbol t}^*)$ such that ${\boldsymbol s}\neq{\boldsymbol t}^*-{\boldsymbol t}^*_i$, then
\begin{equation*}
   -\lim\limits_{n\to\infty} \frac{1}{n}\log  \mathbb{P}\left( Q_i^{(n)}  > b n \right)  \leq \inf\limits_{{\boldsymbol t}\in\mathcal{T}_i} \left\{ \frac{\Big[ b - \left(\mu_i-\overline\lambda_i\right) {\boldsymbol t}_i \Big]^2}{2\,\Var \left( \bar A_i({\boldsymbol t}-{\boldsymbol t}_i,{\boldsymbol t}) \right)} \right\}.
\end{equation*}
In the proof of Theorem \ref{thm:acyclicUpperLD}, the lower bound in the decay rate was obtained by replacing the decay rate of an intersection of events by the decay rate of the least likely of these. Therefore, if the optimum path in this least likely set happens to be in all the sets in the intersection, then the bound is tight. In particular, if ${\boldsymbol t}^*$ and ${\boldsymbol s}^*$ are optimizers in the lower bound of Theorem \ref{thm:acyclicUpperLD}, then we need to show that the most probable path in $\mathcal{U}_{{\boldsymbol t}^*,{\boldsymbol s}^*}$ is in $\mathcal{E}^i(b)$. Furthermore, since Theorem \ref{thm:Icontinuity} states that $\mathcal{E}^i(b)$ is a $\mathbb{I}$-continuity set, then it is enough to show that the most probable path in $\mathcal{U}_{{\boldsymbol t}^*,{\boldsymbol s}^*}$ is in~$\overline{\mathcal{E}^i(b)}$.

\begin{claim}
If $k_b^i\left({\boldsymbol t}^*,{\boldsymbol s}\right) < c_i\left({\boldsymbol t}^*,{\boldsymbol s}\right)$, for all ${\boldsymbol s}\in\mathcal{S}_i({\boldsymbol t}^*)$ such that ${\boldsymbol s}\neq{\boldsymbol t}^*-{\boldsymbol t}^*_i$, then a most probable path in $\mathcal{U}_{{\boldsymbol t}^*,{\boldsymbol s}^*}$ is $f^*\in\Omega^k$ such that
\[ f^*_j(\cdot) = \mathbb{E}\left[ \hat A_j(\cdot) \,\left|\, \bar A_i({\boldsymbol t}^*-{\boldsymbol t}^*_i,{\boldsymbol t}^*) = b - \left(\mu_i - \overline\lambda_i\right) {\boldsymbol t}_i^* \right.\right], \]
for $j\in\{1,\dots,k\}$.
\end{claim}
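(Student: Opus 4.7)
The plan is to recognize $\mathcal{U}_{{\boldsymbol t}^*,{\boldsymbol s}^*}$ as a closed convex subset of the RKHS $\mathcal{R}^k$ cut out by two continuous linear inequalities, reduce the minimization of $\mathbb{I}(f)$ to the two-dimensional convex quadratic program already solved in the proof of Lemma~\ref{lem:computeI} via the Hilbert projection theorem, invoke the hypothesis $k_b^i < c_i$ to show that only the first constraint is active, and identify the resulting one-constraint projection with the Gaussian conditional expectation via the reproducing property.

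First, I would introduce the continuous linear functionals
\[ L_1(f) := f_i({\boldsymbol t}_i^*) + \sum_{r \in \mathcal{P}_2(i)} \bigl[f_{r_1}({\boldsymbol t}_r^*) - f_{r_1}({\boldsymbol t}_r^* - {\boldsymbol t}_i^*)\bigr]\Pi_r, \quad L_2(f) := f_i({\boldsymbol t}_i^*) + \sum_{r \in \mathcal{P}_2(i)}\bigl[f_{r_1}({\boldsymbol t}_r^*) - f_{r_1}({\boldsymbol s}_r^*)\bigr]\Pi_r, \]
so the two constraints defining $\mathcal{U}_{{\boldsymbol t}^*,{\boldsymbol s}^*}$ become $L_1(f) \geq a_1$ and $L_2(f) \geq a_2$, with $a_1 := b - (\mu_i - \overline{\lambda}_i){\boldsymbol t}_i^*$ and $a_2 := a_1 - c_i({\boldsymbol t}^*,{\boldsymbol s}^*)$. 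Each $L_\ell$ is a finite linear combination of pointwise evaluations of $f$, so by the reproducing property there exist $g_1, g_2 \in \mathcal{R}^k$, explicit finite linear combinations of kernel sections $\Sigma(t,\cdot)v$, with $L_\ell(f) = \langle f, g_\ell\rangle_{\mathcal{R}^k}$; in particular $\|g_1\|^2_{\mathcal{R}^k} = \var\bigl(\bar A_i({\boldsymbol t}^*-{\boldsymbol t}_i^*, {\boldsymbol t}^*)\bigr)$.

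Second, the Hilbert projection theorem forces the unique minimizer of $\|f\|^2_{\mathcal{R}^k}$ over $\mathcal{U}_{{\boldsymbol t}^*,{\boldsymbol s}^*}$ to lie in $\mathrm{span}\{g_1, g_2\}$. Under the change of variables $(y,z) = (L_1(f), L_2(f))$, the problem reduces to minimizing the quadratic form $\Lambda_{{\boldsymbol t}^*,{\boldsymbol s}^*}(y,z)$ subject to $y \geq a_1,\, z \geq a_2$, which was solved inside the proof of Lemma~\ref{lem:computeI}. The hypothesis $k_b^i({\boldsymbol t}^*, {\boldsymbol s}) < c_i({\boldsymbol t}^*, {\boldsymbol s})$ (trivially extended to the degenerate case ${\boldsymbol s}^* = {\boldsymbol t}^*-{\boldsymbol t}_i^*$ where the two constraints coincide) places $({\boldsymbol t}^*, {\boldsymbol s}^*)$ in the first branch of \eqref{eq:acyclicExponent}, and that analysis showed the optimum satisfies $y^* = a_1$ with $z^* > a_2$, so only the first constraint is active at the optimum. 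The constrained minimizer must therefore be $f^\circ := (a_1 / \|g_1\|^2_{\mathcal{R}^k})\, g_1$.

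Finally, evaluating $f^\circ$ componentwise via the reproducing property yields $f^\circ_j(t) = (a_1/\var(L_1(\hat A)))\,\cov(\hat A_j(t), L_1(\hat A))$, which is exactly the Gaussian regression formula for $\mathbb{E}[\hat A_j(t) \mid L_1(\hat A) = a_1]$, so $f^\circ = f^*$ as claimed. To close, I would verify $f^* \in \mathcal{U}_{{\boldsymbol t}^*, {\boldsymbol s}^*}$ by noting that $\bar A_i({\boldsymbol s}^*, {\boldsymbol t}^*) = \bar A_i({\boldsymbol t}^*-{\boldsymbol t}_i^*, {\boldsymbol t}^*) - \bar A_i({\boldsymbol t}^*-{\boldsymbol t}_i^*, {\boldsymbol s}^*)$, which together with Gaussian conditioning gives $L_2(f^*) = a_1 - k_b^i({\boldsymbol t}^*, {\boldsymbol s}^*) > a_1 - c_i({\boldsymbol t}^*, {\boldsymbol s}^*) = a_2$, so the second constraint is met with strict slack. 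The only delicate point I anticipate is the reduction of the infinite-dimensional RKHS projection to the finite-dimensional quadratic program, which has to invoke the Hilbert projection theorem together with the finite-rank structure of the representers $g_1, g_2$ to exclude extraneous minimizers outside their span; once that is in place, the remaining identification and verification are routine algebra.
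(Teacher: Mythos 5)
Your proof is correct, and it takes a genuinely different route from the paper's. The paper proceeds by ``guess and verify'': it posits $f^*$ as the Gaussian conditional expectation, shows $f^* \in \mathcal{R}^k$ by writing it explicitly as a finite combination of kernel sections, then directly expands the RKHS inner product to compute $\mathbb{I}(f^*)$ and observes this equals the infimum already established in Lemma~\ref{lem:computeI} under the hypothesis. You instead \emph{derive} the minimizer: you reduce the minimization of $\mathbb{I}$ over $\mathcal{U}_{{\boldsymbol t}^*,{\boldsymbol s}^*}$ to the span of the two representers $g_1,g_2$ (which is really the orthogonal-decomposition argument $\|f\|^2 = \|f_\parallel\|^2+\|f_\perp\|^2$ with the constraints depending only on $f_\parallel$, rather than the projection theorem per se, though the two are closely linked), reuse the already-solved quadratic program from Lemma~\ref{lem:computeI} to determine which constraints are active, and then recognize the one-active-constraint projection as the conditional expectation via the reproducing property. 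Your approach explains \emph{why} the conditional expectation is optimal; the paper's is shorter but relies on an explicit variance identity. You also explicitly verify feasibility, $L_2(f^*) = a_1 - k_b^i({\boldsymbol t}^*,{\boldsymbol s}^*) \geq a_2$, which the paper's proof of the claim leaves implicit. One small imprecision worth noting: you assert this inequality is strict, which only holds when ${\boldsymbol s}^* \neq {\boldsymbol t}^*-{\boldsymbol t}^*_i$; in the degenerate case ${\boldsymbol s}^* = {\boldsymbol t}^*-{\boldsymbol t}^*_i$ the two constraints coincide and $L_2(f^*)=a_2$, which you flag separately, so the conclusion stands.
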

\begin{proof}
For $j\in\{1,\dots,k\}$, we have
\begin{align*}
 f^*_j(\cdot) &= \frac{{\mathbb C}{\rm ov}\left(\hat A_j(\cdot) ,\, \bar A_i({\boldsymbol t}^*-{\boldsymbol t}^*_i,{\boldsymbol t}^*) \right)}{\Var \left( \bar A_i({\boldsymbol t}^*-{\boldsymbol t}^*_i,{\boldsymbol t}^*) \right)} \Big[ b - \left(\mu_i - \overline\lambda_i\right) {\boldsymbol t}_i^* \Big].
\end{align*}
Then, we can write
\[ f^*(\cdot) = \left( \sum\limits_{r\in\mathcal{P}_1(i)} \Big[ K({\boldsymbol t}^*_{r},\cdot) - K({\boldsymbol t}^*_{r} - {\boldsymbol t}_i^*,\cdot) \Big].e_{r_1}\Pi_r  \right) \left[\frac{b - \left(\mu_i - \overline\lambda_i\right) {\boldsymbol t}_i^*}{\Var \left( \bar A_i({\boldsymbol t}^*-{\boldsymbol t}^*_i,{\boldsymbol t}^*) \right)} \right], \]
and thus $f^*$ is in the {\sc rkhs} $\mathcal{R}^k$. Then, we have
\begin{align*}
 \mathbb{I}(f^*) &= \frac{1}{2} \langle f^*,\, f^* \rangle_{\mathcal{R}^k}\\
 &= \frac{1}{2} \left( \sum\limits_{r\in\mathcal{P}_1(i)} \sum\limits_{r'\in\mathcal{P}_1(i)} e_{r_1}^\top. \big[ K({\boldsymbol t}^*_r,{\boldsymbol t}^*_{r'}) - K({\boldsymbol t}^*_r,{\boldsymbol t}^*_{r'}-{\boldsymbol t}^*_i) - K({\boldsymbol t}^*_r-{\boldsymbol t}^*_i, {\boldsymbol t}^*_{r'}) \right. \\
 &\qquad\qquad\qquad\qquad\qquad  + K({\boldsymbol t}^*_r-{\boldsymbol t}^*_i,{\boldsymbol t}^*_{r'}-{\boldsymbol t}^*_i) \big]. e_{r'_1} \Pi_r\Pi_{r'} \Big)  \left[\frac{b - \left(\mu_i - \overline\lambda_i\right) {\boldsymbol t}_i^*}{\Var \left( \bar A_i({\boldsymbol t}^*-{\boldsymbol t}^*_i,{\boldsymbol t}^*) \right)} \right]^2 \\
 &= \frac{1}{2} \left( \sum\limits_{r\in\mathcal{P}_1(i)} \sum\limits_{r'\in\mathcal{P}_1(i)} \left[ {\mathbb C}{\rm ov}\left( \hat A_{r_1}({\boldsymbol t}^*_r),\,\, \hat A_{r'_1}({\boldsymbol t}^*_{r'}) \right) - {\mathbb C}{\rm ov}\left( \hat A_{r_1}({\boldsymbol t}^*_r),\,\, \hat A_{r'_1}({\boldsymbol t}^*_{r'}-{\boldsymbol t}^*_i) \right) - {\mathbb C}{\rm ov}\left( \hat A_{r_1}({\boldsymbol t}^*_r-{\boldsymbol t}^*_i),\,\, \hat A_{r'_1}({\boldsymbol t}^*_{r'}) \right) \right.\right. \\
 &\qquad\qquad\qquad\qquad\qquad + \left.\left. {\mathbb C}{\rm ov}\left( \hat A_{r_1}({\boldsymbol t}^*_r-{\boldsymbol t}^*_i),\,\, \hat A_{r'_1}({\boldsymbol t}^*_{r'}-{\boldsymbol t}^*_i) \right) \right] \Pi_r\Pi_{r'} \right)  \left[\frac{b - \left(\mu_i - \overline\lambda_i\right) {\boldsymbol t}_i^*}{\Var \left( \bar A_i({\boldsymbol t}^*-{\boldsymbol t}^*_i,{\boldsymbol t}^*) \right)} \right]^2 \\
 &= \frac{1}{2} \left[ \sum\limits_{r\in\mathcal{P}_1(i)} \sum\limits_{r'\in\mathcal{P}_1(i)} {\mathbb C}{\rm ov}\left( \hat A_{r_1}({\boldsymbol t}^*_r) - \hat A_{r_1}({\boldsymbol t}^*_r-{\boldsymbol t}^*_i),\,\, \hat A_{r'_1}({\boldsymbol t}^*_{r'}) - \hat A_{r'_1}({\boldsymbol t}^*_{r'}-{\boldsymbol t}^*_i) \right) \Pi_r\Pi_{r'} \right]  \left[\frac{b - \left(\mu_i - \overline\lambda_i\right) {\boldsymbol t}_i^*}{\Var \left( \bar A_i({\boldsymbol t}^*-{\boldsymbol t}^*_i,{\boldsymbol t}^*) \right)} \right]^2 \\
 &= \frac{1}{2} \Var \left( \sum\limits_{r\in\mathcal{P}_1(i)} \left[ \hat A_{r_1}({\boldsymbol t}^*_r) - \hat A_{r_1}({\boldsymbol t}^*_r-{\boldsymbol t}^*_i) \right] \Pi_r \right)  \left[\frac{b - \left(\mu_i - \overline\lambda_i\right) {\boldsymbol t}_i^*}{\Var \left( \bar A_i({\boldsymbol t}^*-{\boldsymbol t}^*_i,{\boldsymbol t}^*) \right)} \right]^2\\
 &=\frac{\Big[ b - \left(\mu_i - \overline\lambda_i\right) {\boldsymbol t}_i^* \Big]^2}{2\,\Var \left( \bar A_i({\boldsymbol t}^*-{\boldsymbol t}^*_i,{\boldsymbol t}^*) \right)}.
\end{align*}
Since $k_b^i\left({\boldsymbol t}^*,{\boldsymbol s}\right) < c_i\left({\boldsymbol t}^*,{\boldsymbol s}\right)$ for all ${\boldsymbol s}\in\mathcal{S}_i({\boldsymbol t}^*)$ such that ${\boldsymbol s}\neq{\boldsymbol t}^*-{\boldsymbol t}^*_i$,  the expression above is equal to the lower bound in Theorem \ref{thm:acyclicUpperLD}. It follows that $f^*$ is a most probable path in the set $\mathcal{U}_{{\boldsymbol t}^*,{\boldsymbol s}^*}$.
\end{proof}

To complete the proof, we just need to show that $f^*\in\overline{\mathcal{E}^i(b)}$, i.e., we need to show that there exists ${\boldsymbol t}\in\overline{\mathcal{T}_i}$ such that
\[ f^*_i({\boldsymbol t}_i) + \sum\limits_{r\in\mathcal{P}_2(i)} \Big[f^*_{r_1}({\boldsymbol t}_{r}) - f^*_{r_1}({\boldsymbol s}_{r})\Big] \Pi_r \geq b - \left(\mu_i-\overline\lambda_i\right) {\boldsymbol t}_i - c_i({\boldsymbol t},{\boldsymbol s}), \]
for all ${\boldsymbol s}\in\mathcal{S}_i({\boldsymbol t})$. For ${\boldsymbol t}={\boldsymbol t}^*$, we have
\begin{align*}
  f^*_i(t^*_i) + \sum\limits_{r\in\mathcal{P}_2(i)} \Big[f^*_{r_1}({\boldsymbol t}^*_{r}) - f^*_{r_1}({\boldsymbol s}_{r})\Big] \Pi_r & = \mathbb{E}\left[\left. \bar A_i({\boldsymbol s},{\boldsymbol t}^*) \,\right|\, \bar A_i({\boldsymbol t}^*-{\boldsymbol t}^*_i,{\boldsymbol t}^*) = b - \left(\mu_i -\overline\lambda_i\right) {\boldsymbol t}_i^* \right] \\
  &= b - \left(\mu_i - \overline\lambda_i\right) t^*_i + \mathbb{E}\left[\left. \bar A_i({\boldsymbol s},{\boldsymbol t}^*-{\boldsymbol t}^*_i) \,\right|\, \bar A_i({\boldsymbol t}^*-{\boldsymbol t}^*_i,{\boldsymbol t}^*) = b - \left(\mu_i -\overline\lambda_i\right) {\boldsymbol t}_i^* \right] \\& = b - \left(\mu_i - \overline\lambda_i\right) t^*_i - k_b^i({\boldsymbol t}^*,{\boldsymbol s}).
\end{align*}
Finally, combining this with   \eqref{eq:auxK} and the fact that $k_b^i\left({\boldsymbol t}^*,{\boldsymbol t}^*-{\boldsymbol t}^*_i\right) = 0 = c_i\left({\boldsymbol t}^*,{\boldsymbol t}^*-{\boldsymbol t}^*_i\right)$, we obtain
\begin{align*}
  f^*_i(t^*_i) + \sum\limits_{r\in\mathcal{P}_2(i)} \Big[f^*_{r_1}({\boldsymbol t}^*_{r}) - f^*_{r_1}({\boldsymbol s}_{r})\Big] \Pi_r  &= b - \left(\mu_i-\overline\lambda_i\right) t^*_i - k_b^i({\boldsymbol t}^*,{\boldsymbol s})  \geq b - \left(\mu_i-\overline\lambda_i\right) t^*_i - c_i({\boldsymbol t}^*,{\boldsymbol s}),
\end{align*}
for all ${\boldsymbol s}\in\mathcal{S}_i({\boldsymbol t}^*)$, which concludes the proof.

\section{Proof of Lemma \ref{lem:sufficientCondition}}\label{app:sufficientCondition}

Since ${\boldsymbol t}-{\boldsymbol t}_i\in\mathcal{S}_i({\boldsymbol t})$ for all ${\boldsymbol t}\in\mathcal{T}_i$, we have
\begin{align*}
  \sup\limits_{{\boldsymbol s}\in\mathcal{S}_i({\boldsymbol t})} \left\{ \frac{\Big[b - \left(\mu_i-\overline\lambda_i\right) {\boldsymbol t}_i - c_i({\boldsymbol t},{\boldsymbol s}) \Big]^2}{2\,\Var \left( \bar A_i({\boldsymbol s},{\boldsymbol t}) \right)} \right\} & \geq \frac{\Big[ b - \left(\mu_i-\overline\lambda_i\right) {\boldsymbol t}_i \Big]^2}{2\,\Var \left( \bar A_i({\boldsymbol t}-{\boldsymbol t}_i,{\boldsymbol t}) \right)},
\end{align*}
for all ${\boldsymbol t}\in\mathcal{T}_i$. Therefore, we have
\begin{align*}
  \sup\limits_{{\bf }s\in\mathcal{S}_i({\boldsymbol t})} \Big\{I^i_b({\boldsymbol t},{\boldsymbol s})\Big\} &\geq \frac{\Big[ b - \left(\mu_i-\overline\lambda_i\right) {\boldsymbol t}_i \Big]^2}{2\,\Var \left( \bar A_i({\boldsymbol t}-{\boldsymbol t}_i,{\boldsymbol t}) \right)},
\end{align*}
and thus
\begin{align}
  \inf\limits_{{\boldsymbol t}\in\mathcal{T}_i}\sup\limits_{{\bf }s\in\mathcal{S}_i({\boldsymbol t})} \Big\{I^i_b({\boldsymbol t},{\boldsymbol s})\Big\} &\geq \inf\limits_{{\boldsymbol t}\in\mathcal{T}_i} \left\{ \frac{\Big[ b - \left(\mu_i-\overline\lambda_i\right) {\boldsymbol t}_i \Big]^2}{2\,\Var \left( \bar A_i({\boldsymbol t}-{\boldsymbol t}_i,{\boldsymbol t}) \right)} \right\} = \frac{\Big[ b - \left(\mu_i-\overline\lambda_i\right) \tilde {\boldsymbol t}_i \Big]^2}{2\,\Var \left( \bar A_i\left(\tilde {\boldsymbol t}- \tilde {\boldsymbol t}_i,\tilde{\boldsymbol t}\right) \right)}. \label{eq:achievedLowerBoundI}
\end{align}

On the other hand, since $k_b^i\left(\tilde {\boldsymbol t},{\boldsymbol s}\right) < c_i\left(\tilde {\boldsymbol t},{\boldsymbol s}\right)$ for all ${\boldsymbol s}\in\mathcal{S}_i(\tilde {\boldsymbol t})$ such that ${\boldsymbol s}\neq \tilde {\boldsymbol t}-\tilde {\boldsymbol t}_i$, we have
\begin{equation*}
  I^i_b(\tilde {\boldsymbol t},{\boldsymbol s}) = \frac{\Big[ b - \left(\mu_i-\overline\lambda_i\right) \tilde {\boldsymbol t}_i \Big]^2}{2\,\Var \left( \bar A_i\left(\tilde {\boldsymbol t}- \tilde {\boldsymbol t}_i,\tilde{\boldsymbol t}\right) \right)},
\end{equation*}
for all ${\boldsymbol s}\in\mathcal{S}_i(\tilde {\boldsymbol t})$. Combining this with   \eqref{eq:achievedLowerBoundI}, we get
\begin{equation}\label{eq:auxUpperBound}
  \inf\limits_{{\boldsymbol t}\in\mathcal{T}_i}\sup\limits_{{\bf }s\in\mathcal{S}_i({\boldsymbol t})} \Big\{I^i_b({\boldsymbol t},{\boldsymbol s})\Big\} = \frac{\Big[ b - \left(\mu_i-\overline\lambda_i\right) \tilde {\boldsymbol t}_i \Big]^2}{2\,\Var \left( \bar A_i\left(\tilde {\boldsymbol t}- \tilde {\boldsymbol t}_i,\tilde{\boldsymbol t}\right) \right)}.
\end{equation}
In particular, this means that we can pick $\tilde {\boldsymbol t} = {\boldsymbol t}^*$, and thus $k_b^i({\boldsymbol t}^*,{\boldsymbol s}) = k_b^i(\tilde {\boldsymbol t},{\boldsymbol s}) < c_i(\tilde {\boldsymbol t},{\boldsymbol s}) = c_i({\boldsymbol t}^*,{\boldsymbol s}),$ for all ${\boldsymbol s}\in\mathcal{S}_i({\boldsymbol t}^*)$ such that ${\boldsymbol s} \neq {\boldsymbol t}^*-{\boldsymbol t}^*_i$.

\section{Proof of Theorem \ref{thm:acyclicLowerLD3}}\label{app:acyclicLowerLD3}

Similarly to the proof of Theorem \ref{thm:acyclicLowerLD1}, if ${\boldsymbol t}^*$ and ${\boldsymbol s}^*$ are optimizers in the lower bound of Theorem \ref{thm:acyclicUpperLD}, we need to show that the most probable path in $\mathcal{U}_{{\boldsymbol t}^*,{\boldsymbol s}^*}$ is in $\overline{\mathcal{E}^i(b)}$. 


\begin{claim}
If $h_b^i\left({\boldsymbol t}^*,{\boldsymbol s}^*\right) \leq c_i\left({\boldsymbol t}^*,{\boldsymbol s}^*\right)$
and
$k_b^i\left({\boldsymbol t}^*,{\boldsymbol s}^*\right) \geq c_i\left({\boldsymbol t}^*,{\boldsymbol s}^*\right),$
then a most probable path in $\mathcal{U}_{{\boldsymbol t}^*,{\boldsymbol s}^*}$ is $f^*\in\Omega^k$ such that
\begin{align*}
 f^*_j(\cdot) &= \mathbb{E}\left[ \hat A_j(\cdot) \,\left|\, \bar A_i({\boldsymbol t}^*-{\boldsymbol t}^*_i,{\boldsymbol t}^*) = b - \left(\mu_i-\overline\lambda_i\right) {\boldsymbol t}_i^*;\,\, \bar A_i({\boldsymbol t}^*-{\boldsymbol t}^*_i,{\boldsymbol s}^*) = c_i({\boldsymbol t}^*,{\boldsymbol s}^*) \right. \right],
\end{align*}
for $j\in\{1,\dots,k\}$.
\end{claim}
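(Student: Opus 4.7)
My plan is to mirror the construction used in the analogous claim within the proof of Theorem \ref{thm:acyclicLowerLD1}, upgraded from a single-constraint to a two-constraint Gaussian conditioning. Set
\[ Z_1 \triangleq \bar A_i({\boldsymbol t}^* - {\boldsymbol t}^*_i, {\boldsymbol t}^*), \qquad Z_2 \triangleq \bar A_i({\boldsymbol t}^* - {\boldsymbol t}^*_i, {\boldsymbol s}^*), \]
with target values $a_1 \triangleq b - (\mu_i - \overline{\lambda}_i){\boldsymbol t}^*_i$ and $a_2 \triangleq c_i({\boldsymbol t}^*, {\boldsymbol s}^*)$. Since $Z_1$ and $Z_2$ are finite linear combinations of increments of the coordinates of $\hat A$ indexed by $r\in\mathcal{P}_1(i)$, the standard Gaussian conditioning formula gives
\[ f^*_j(\cdot) = \bigl(\Cov(\hat A_j(\cdot), Z_1),\, \Cov(\hat A_j(\cdot), Z_2)\bigr) \, C^{-1} \, (a_1, a_2)^\top, \]
where $C$ is the $2 \times 2$ covariance matrix of $(Z_1, Z_2)$. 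This exhibits $f^*$ as a finite linear combination of kernel sections $\Sigma(\cdot, \tau).e_\ell$, so $f^* \in \mathcal{R}^k$, and invertibility of $C$ holds by the same Cauchy--Schwarz argument used in the proof of Lemma \ref{lem:computeI} (the degenerate case ${\boldsymbol s}^* = {\boldsymbol t}^* - {\boldsymbol t}^*_i$ would force $Z_2 = 0$ and $a_2 = 0$, collapsing the problem to the setting of Theorem \ref{thm:acyclicLowerLD1}).

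Next I would compute $\mathbb{I}(f^*) = \tfrac12 \langle f^*, f^* \rangle_{\mathcal{R}^k}$ by reorganizing sums exactly as in the norm computation for Theorem \ref{thm:acyclicLowerLD1}; the resulting quadruple sum over pairs $(r,r')\in\mathcal{P}_1(i)^2$ and pairs $(\ell,\ell')\in\{1,2\}^2$ collapses, after recognizing that the inner $(r,r')$-sum reconstructs $C_{\ell,\ell'}$, into the quadratic form
\[ \mathbb{I}(f^*) = \tfrac12 (a_1, a_2) \, C^{-1} \, (a_1, a_2)^\top. \]
Inverting $C$ via the usual $2 \times 2$ block formula produces
\[ \mathbb{I}(f^*) = \frac{a_1^2}{2\,\Var(Z_1)} + \frac{\bigl(a_2 - \mathbb{E}[Z_2 \mid Z_1 = a_1]\bigr)^2}{2\,\Var(Z_2 \mid Z_1)}. \]
By definition of $k_b^i$ the conditional mean $\mathbb{E}[Z_2 \mid Z_1 = a_1]$ equals $k_b^i({\boldsymbol t}^*, {\boldsymbol s}^*)$, and the linear identity $Z_2 = Z_1 - \bar A_i({\boldsymbol s}^*, {\boldsymbol t}^*)$ turns $\Var(Z_2 \mid Z_1)$ into $\Var(\bar A_i({\boldsymbol s}^*, {\boldsymbol t}^*) \mid Z_1 = a_1)$. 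Substituting reproduces exactly the third branch of \eqref{eq:acyclicExponent} evaluated at $({\boldsymbol t}^*, {\boldsymbol s}^*)$, which by Lemma \ref{lem:computeI} equals $\inf_{f\in\mathcal{U}_{{\boldsymbol t}^*, {\boldsymbol s}^*}} \mathbb{I}(f)$ under the hypotheses $h_b^i({\boldsymbol t}^*, {\boldsymbol s}^*) \leq c_i({\boldsymbol t}^*, {\boldsymbol s}^*)$ and $k_b^i({\boldsymbol t}^*, {\boldsymbol s}^*) \geq c_i({\boldsymbol t}^*, {\boldsymbol s}^*)$.

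Finally, I would verify $f^* \in \mathcal{U}_{{\boldsymbol t}^*, {\boldsymbol s}^*}$. The first defining inequality, evaluated at $f^*$, reads $\mathbb{E}[Z_1 \mid Z_1 = a_1, Z_2 = a_2] \geq a_1$, which saturates with equality; the second reads $\mathbb{E}[Z_1 - Z_2 \mid Z_1 = a_1, Z_2 = a_2] \geq a_1 - a_2$, also saturated (using ${\boldsymbol s}^*_i=0$ so $f^*_i(0)=0$, and $\bar A_i({\boldsymbol s}^*,{\boldsymbol t}^*)=Z_1-Z_2$). Together with the norm computation this identifies $f^*$ as a minimizer of $\mathbb{I}$ over $\mathcal{U}_{{\boldsymbol t}^*, {\boldsymbol s}^*}$. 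The chief obstacle I foresee is precisely the bookkeeping in the expansion of $\langle f^*, f^* \rangle_{\mathcal{R}^k}$: one must carefully track the coefficients coming from $C^{-1}(a_1,a_2)^\top$ through the quadruple sum and verify that they reconstruct $C$ so that the matrix product telescopes to $(a_1,a_2)C^{-1}(a_1,a_2)^\top$. Once this collapse is established, the identification with the third branch of \eqref{eq:acyclicExponent} via block inversion and the elementary identity $Z_2 = Z_1 - \bar A_i({\boldsymbol s}^*, {\boldsymbol t}^*)$ is essentially mechanical.
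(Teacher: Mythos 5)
Your proposal is correct and takes essentially the same approach as the paper: both express $f^*$ via the two-constraint Gaussian conditional mean with coefficients $\theta^* = C^{-1}(a_1,a_2)^\top$, recognize $f^*$ as a finite combination of kernel sections to place it in $\mathcal{R}^k$, evaluate $\mathbb{I}(f^*)=\tfrac12(a_1,a_2)C^{-1}(a_1,a_2)^\top$ and block-invert to match the third branch of~\eqref{eq:acyclicExponent}, and then conclude optimality by comparison with Lemma~\ref{lem:computeI}. Your proposal is slightly more explicit than the paper (invoking the Schur complement in place of the paper's ``tedious but straightforward computations'' and explicitly verifying $f^*\in\mathcal{U}_{{\boldsymbol t}^*,{\boldsymbol s}^*}$, a step the paper leaves implicit), but the route is the same.
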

\begin{proof}
Using standard properties of conditional multivariate Normal random variables, we get that
\begin{align*}
  f^*_j(\cdot) &= \theta^*_1 {\mathbb C}{\rm ov}\left( \hat A_j(\cdot),\,\, \bar A_i({\boldsymbol t}^*-{\boldsymbol t}^*_i,{\boldsymbol t}^*) \right) + \theta^*_2 {\mathbb C}{\rm ov}\left( \hat A_j(\cdot),\,\, \bar A_i({\boldsymbol t}^*-{\boldsymbol t}^*_i,{\boldsymbol s}^*) \right) ,
\end{align*}
for all $j\in\{1,\dots,k\}$, where
\begin{align*}
\theta^*&\triangleq
\begin{pmatrix}
 \Var \left( \bar A_i({\boldsymbol t}^*-{\boldsymbol t}^*_i,{\boldsymbol t}^*) \right) & {\mathbb C}{\rm ov}\left( \bar A_i({\boldsymbol t}^*-{\boldsymbol t}^*_i,{\boldsymbol t}^*),\,\, \bar A_i({\boldsymbol t}^*-{\boldsymbol t}^*_i,{\boldsymbol s}^*) \right) \\
{\mathbb C}{\rm ov}\left( \bar A_i({\boldsymbol t}^*-{\boldsymbol t}^*_i,{\boldsymbol t}^*),\,\, \bar A_i({\boldsymbol t}^*-{\boldsymbol t}^*_i,{\boldsymbol s}^*) \right) & \Var \left( \bar A_i({\boldsymbol t}^*-{\boldsymbol t}^*_i,{\boldsymbol s}^*) \right)
\end{pmatrix}^{-1}
\begin{pmatrix}
  b - \left(\mu_i-\overline\lambda_i\right) {\boldsymbol t}_i^* \\
  c_i({\boldsymbol t}^*,{\boldsymbol s}^*)
\end{pmatrix}.
\end{align*}
Then, we can write
\[ f^*(\cdot) = \theta^*_1 \left[ \sum\limits_{r\in\mathcal{P}_1(i)} \Big[ K({\boldsymbol t}^*_{r},\cdot) - K({\boldsymbol t}^*_{r} - {\boldsymbol t}_i^*,\cdot) \Big].e_{r_1}\Pi_r  \right] + \theta^*_2\left[ \sum\limits_{r\in\mathcal{P}_1(i)} \Big[ K({\boldsymbol s}^*_{r},\cdot) - K({\boldsymbol t}^*_{r} - {\boldsymbol t}_i^*,\cdot) \Big].e_{r_1}\Pi_r  \right], \]
and thus $f^*$ is in the {\sc rkhs} $\mathcal{R}^k$. After tedious but straightforward computations
we obtain
\begin{align*}
 \mathbb{I}(f^*) &= \frac{\Big[ b - \left(\mu_i-\overline\lambda_i\right) {\boldsymbol t}_i \Big]^2}{2\,\Var \Big( \bar A_i({\boldsymbol t}-{\boldsymbol t}_i,{\boldsymbol t}) \Big)} + \frac{\Big[ k_b^i(t,{\boldsymbol t},{\boldsymbol s})- c_i({\boldsymbol t},{\boldsymbol s}) \Big]^2}{2\,\Var \Big( \bar A_i({\boldsymbol t}-{\boldsymbol t}_i,{\boldsymbol s}) \,\Big|\, \bar A_i({\boldsymbol t}-{\boldsymbol t}_i,{\boldsymbol t}) = b - \left(\mu_i-\overline\lambda_i\right) {\boldsymbol t}_i \Big)}.
\end{align*}
Since $h_b^i\left({\boldsymbol t}^*,{\boldsymbol s}^*\right) \leq b-\left( \mu_i - \overline\lambda_i \right){\boldsymbol t}_i^*$ and $k_b^i\left({\boldsymbol t}^*,{\boldsymbol s}^*\right) \geq c_i\left({\boldsymbol t}^*,{\boldsymbol s}^*\right)$, the equation above is equal to the lower bound in Theorem \ref{thm:acyclicUpperLD}. It follows that $f^*$ is a most probable path in $\mathcal{U}_{{\boldsymbol t}^*,{\boldsymbol s}^*}$.
\end{proof}

To complete the proof, we just need to show that $f^*\in\overline{\mathcal{E}^i(b)}$, i.e., we need to show that there exists ${\boldsymbol t}\in\overline{\mathcal{T}_i}$ such that
\[ f^*_i({\boldsymbol t}_i) + \sum\limits_{r\in\mathcal{P}_2(i)} \Big[f^*_{r_1}({\boldsymbol t}_{r}) - f^*_{r_1}({\boldsymbol s}_{r})\Big] \Pi_r \geq b - \left(\mu_i-\overline\lambda_i\right) {\boldsymbol t}_i - c_i({\boldsymbol t},{\boldsymbol s}), \]
for all ${\boldsymbol s}\in\mathcal{S}_i({\boldsymbol t})$. In order to simplify notation, we denote
\begin{align*}
  \overline{\mathbb{E}}[ \,\,\cdot\,\, ] &\triangleq \mathbb{E}\left[ \,\,\cdot\,\, \left|\,\, \bar A_i({\boldsymbol t}^*-{\boldsymbol t}^*_i,{\boldsymbol t}^*) = b - \left(\mu_i-\overline\lambda_i\right) t^*_i; \,\, \bar A_i({\boldsymbol t}^*-{\boldsymbol t}^*_i,{\boldsymbol s}^*) = c_i({\boldsymbol t}^*,{\boldsymbol s}^*) \right.\right].
\end{align*}
For ${\boldsymbol t}={\boldsymbol t}^*$, we have
\begin{align*}
  f^*_i({\boldsymbol t}_i^*) + \sum\limits_{r\in\mathcal{P}_2(i)} \Big[f^*_{r_1}({\boldsymbol t}^*_{r}) - f^*_{r_1}({\boldsymbol s}_{r})\Big] \Pi_r &= \overline{\mathbb{E}}\left[ \hat A_i({\boldsymbol t}^*_i) + \sum\limits_{r\in\mathcal{P}_2(i)} \Big[ \hat A_{r_1}({\boldsymbol t}^*_{r}) - \hat A_{r_1}({\boldsymbol s}_{r}) \Big] \Pi_r  \right] \\
&= b - \left(\mu_i-\overline\lambda_i\right) {\boldsymbol t}_i^* - \overline{\mathbb{E}}\left[ \sum\limits_{r\in\mathcal{P}_2(i)} \Big[ \hat A_{r_1}({\boldsymbol s}_{r}) - \hat A_{r_1}({\boldsymbol t}^*_{r}-{\boldsymbol t}_i^*) \Big] \Pi_r \right].
\end{align*}
Combining this with  \eqref{eq:weirdCondition}, we obtain
\begin{align*}
 f^*_i({\boldsymbol t}_i^*) + \sum\limits_{r\in\mathcal{P}_2(i)} \Big[f^*_{r_1}({\boldsymbol t}^*_{r}) - f^*_{r_1}({\boldsymbol s}_{r})\Big] \Pi_r &\geq b - \left(\mu_i-\overline\lambda_i\right) {\boldsymbol t}_i^* - c_i({\boldsymbol t}^*,{\boldsymbol s}),
\end{align*}
for all ${\boldsymbol s}\in\mathcal{S}_i({\boldsymbol t}^*)$, which concludes the proof.

\section{Proof of Theorem \ref{thm:mfBmExample}}\label{app:mfBmExample}
We start with a technical lemma.

\begin{lemma}\label{lem:optimizerProperty}
There exists
\begin{equation}\label{eq:toOptimize}
 {\boldsymbol t}^* \in \underset{{\boldsymbol t}\in\overline{\mathcal{T}_i}}{\arg\min} \left\{ \frac{\Big[b-\big(\mu_i\overline\lambda_i\big){\boldsymbol t}_i\Big]^2}{\Var \left( \bar A_i({\boldsymbol t}-{\boldsymbol t}_i,{\boldsymbol t}) \right)} \right\}.
\end{equation}
such that ${\boldsymbol t}^*_r={\boldsymbol t}^*_i$, for all $r\in\mathcal{P}_2(i)$.
\end{lemma}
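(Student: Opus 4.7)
My plan is to establish the stronger statement that for any fixed $\bs t_i \leq 0$, the variance $\Var\bigl(\bar A_i(\bs t-\bs t_i,\bs t)\bigr)$ is maximized over the remaining coordinates $(\bs t_r)_{r\in\mathcal{P}_2(i)}$ (within the closed feasible region) by the diagonal choice $\bs t_r = \bs t_i$ for every $r\in\mathcal{P}_2(i)$. Since the numerator in \eqref{eq:toOptimize} depends only on $\bs t_i$, this implies that the infimum may be restricted to such diagonal configurations, on which the objective collapses to
\[
\frac{[b-(\mu_i-\overline\lambda_i)\bs t_i]^2}{2|\bs t_i|^{2H}\overline\sigma_i^2}.
\]
This is a one-variable problem in $\bs t_i < 0$ whose value tends to $+\infty$ both as $\bs t_i\to 0$ and as $\bs t_i\to -\infty$ (the latter because $2H < 2$ under the hypothesis $H<1$ implicit in Theorem~\ref{thm:mfBmExample}), so a minimizer exists by elementary compactness.

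To obtain the variance monotonicity, I would expand
\[
\Var\bigl(\bar A_i(\bs t-\bs t_i,\bs t)\bigr) = \sum_{r,r'\in\mathcal{P}_1(i)} \Pi_r\Pi_{r'}\,\cov\bigl(\hat A_{r_1}(\bs t_r)-\hat A_{r_1}(\bs t_r-\bs t_i),\,\hat A_{r'_1}(\bs t_{r'})-\hat A_{r'_1}(\bs t_{r'}-\bs t_i)\bigr)
\]
and invoke the time-reversible mfBm covariance formula (which simplifies drastically when $\eta_{j,l}\equiv 0$ and $H_j\equiv H$) to rewrite each summand as $\tfrac{1}{2}\sigma_{r_1}\sigma_{r'_1}\rho_{r_1,r'_1}\,g(\bs t_r-\bs t_{r'})$, where
\[
g(d) \triangleq |d+\bs t_i|^{2H}+|d-\bs t_i|^{2H}-2|d|^{2H}.
\]
The crucial step is the pointwise inequality $g(d) \leq g(0) = 2|\bs t_i|^{2H}$. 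I would prove this by recognizing, through a direct kernel computation, that $\tfrac{1}{2}g(d)$ is exactly the covariance between two length-$|\bs t_i|$ increments of a standard one-dimensional fBm with Hurst index $H$, separated in time by $|d|$; Cauchy--Schwarz together with the stationarity of fBm increments then bounds this covariance by $|\bs t_i|^{2H}$.

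Combined with the non-negativity hypotheses $\rho_{r_1,r'_1}\geq 0$, $\sigma_j \geq 0$, and $\Pi_r \geq 0$, the pointwise bound on $g$ makes the variance term-by-term maximal when $\bs t_r = \bs t_{r'}$ for every pair $r,r'$, which is achieved simultaneously by the diagonal choice $\bs t_r = \bs t_i$ for all $r\in\mathcal{P}_2(i)$; this configuration lies in $\overline{\mathcal{T}_i}$ because the defining constraints $\bs t_r \leq \bs t_{r_+}$ are then satisfied with equality. The main (and essentially only) delicate point is the identification of $g(d)/2$ as an increment covariance, which is a routine but essential algebraic check using the mfBm kernel identities recalled in the preliminaries; from there the Cauchy--Schwarz bound and the sign assumptions do all the remaining work, and the lemma follows.
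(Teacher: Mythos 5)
Your proof is correct, and at the key step it takes a genuinely different route from the paper. Both proofs start identically: the numerator depends only on $\bs t_i$, so one reduces to maximizing $\Var\left(\bar A_i(\bs t-\bs t_i,\bs t)\right)$ over the remaining coordinates, and both expand that variance into a double sum over paths with summands proportional to
$g(\bs t_r - \bs t_{r'})$ where $g(d) = |d+\bs t_i|^{2H}+|d-\bs t_i|^{2H}-2|d|^{2H}$. Where they diverge is in how one shows the diagonal configuration maximizes each summand. The paper differentiates the variance with respect to $\bs t_r$, argues the partial derivative is nonnegative whenever $\bs t_r$ is the smallest coordinate, and for that relies on the concavity of $x\mapsto x^{2H-1}$ on $\R_+$; that concavity is precisely the point where $H\geq 1/2$ enters. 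Your observation that $g(d)/2$ is itself the covariance of two length-$|\bs t_i|$ increments of a standard one-dimensional fBm, so that Cauchy--Schwarz and stationarity of increments bound it by $|\bs t_i|^{2H}=g(0)/2$, is cleaner and slightly stronger: it yields $g(d)\leq g(0)$ directly and for \emph{every} $H\in(0,1)$, bypassing both the derivative computation and the $H\geq 1/2$ restriction at this step. (Combined with the sign hypotheses $\rho_{j,l}\geq 0$, $\sigma_j\geq 0$, $\Pi_r\geq 0$, this immediately gives term-by-term maximality of the diagonal, which is feasible in $\overline{\mathcal{T}_i}$.) The hypothesis $H\geq 1/2$ is of course still imposed at the level of Theorem~\ref{thm:mfBmExample}, but your version of the lemma holds unconditionally in $H$, which is a mild improvement; you also explicitly address existence of the one-variable minimizer, which the paper leaves implicit.
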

\begin{proof}
Note that the numerator of the function being minimized in   \eqref{eq:toOptimize} only depends on ${\boldsymbol t}_i$. As a result, we can focus on the structure of the maximizers of its denominator when we keep ${\boldsymbol t}_i$ fixed. Using that $\hat A(\cdot)$ is a time-reversible mfBm, we obtain that $\Var \left( \bar A_i({\boldsymbol t}-{\boldsymbol t}_i,{\boldsymbol t}) \right)$ equals
\begin{align*}
  & \sum\limits_{r\in\mathcal{P}_1(i)} \sum\limits_{r'\in\mathcal{P}_1(i)} \Pi_r \Pi_{r'} {\mathbb C}{\rm ov}\left( \hat A_{r_1}({\boldsymbol t}_{r}) - \hat A_{r_1}({\boldsymbol t}_{r}-{\boldsymbol t}_i),\,\, \hat A_{r'_1}({\boldsymbol t}_{r'}) - \hat A_{r'_1}({\boldsymbol t}_{r'}-{\boldsymbol t}_i)\right) \\
  &= \sum\limits_{r\in\mathcal{P}_1(i)} \sum\limits_{r'\in\mathcal{P}_1(i)} \Pi_r \Pi_{r'} \left[ {\mathbb C}{\rm ov}\left( \hat A_{r_1}({\boldsymbol t}_{r}),\,\, \hat A_{r'_1}({\boldsymbol t}_{r'})\right) - {\mathbb C}{\rm ov}\left( \hat A_{r_1}({\boldsymbol t}_{r}),\,\, \hat A_{r'_1}({\boldsymbol t}_{r'}-{\boldsymbol t}_i)\right) \right. \\
  &\qquad\qquad\qquad\qquad\qquad\qquad - \left. {\mathbb C}{\rm ov} \left( \hat A_{r_1}({\boldsymbol t}_{r}-{\boldsymbol t}_i),\,\, \hat A_{r'_1}({\boldsymbol t}_{r'}) \right) + {\mathbb C}{\rm ov} \left( \hat A_{r_1}({\boldsymbol t}_{r}-{\boldsymbol t}_i),\,\, \hat A_{r'_1}({\boldsymbol t}_{r'}-{\boldsymbol t}_i) \right)  \right] \\
  &=\sum\limits_{r\in\mathcal{P}_1(i)} \sum\limits_{r'\in\mathcal{P}_1(i)}  \frac{\sigma_{r'_1}\sigma_{r_1}\rho_{r_1,r'_1}}{2} \left[ \Big( |{\boldsymbol t}_{r}|^{2H} + |{\boldsymbol t}_{r'}|^{2H} - |{\boldsymbol t}_{r}-{\boldsymbol t}_{r'}|^{2H} \Big) - \Big( |{\boldsymbol t}_{r}|^{2H} + |{\boldsymbol t}_{r'}-{\boldsymbol t}_i|^{2H} - |{\boldsymbol t}_{r}-{\boldsymbol t}_{r'}+{\boldsymbol t}_i|^{2H} \Big) \right. \\
  &\qquad\qquad\qquad - \left. \Big( |{\boldsymbol t}_{r}-{\boldsymbol t}_i|^{2H} + |{\boldsymbol t}_{r'}|^{2H} - |{\boldsymbol t}_{r}-{\boldsymbol t}_i-{\boldsymbol t}_{r'}|^{2H} \Big) + \Big( |{\boldsymbol t}_{r}-{\boldsymbol t}_i|^{2H} + |{\boldsymbol t}_{r'}-{\boldsymbol t}_i|^{2H} - |{\boldsymbol t}_{r}-{\boldsymbol t}_{r'}|^{2H} \Big)  \right] \Pi_r \Pi_{r'} \\
  &= \sum\limits_{r\in\mathcal{P}_1(i)} \sum\limits_{r'\in\mathcal{P}_1(i)}  \frac{\sigma_{r'_1}\sigma_{r_1}\rho_{r_1,r'_1}}{2} \Big[ \Big( |{\boldsymbol t}_{r}-{\boldsymbol t}_{r'}+{\boldsymbol t}_i|^{2H} + |{\boldsymbol t}_{r}-{\boldsymbol t}_i-{\boldsymbol t}_{r'}|^{2H} - 2 |{\boldsymbol t}_{r}-{\boldsymbol t}_{r'}|^{2H} \Big) \Big] \Pi_r \Pi_{r'}.
\end{align*}
Taking the derivative with respect to ${\boldsymbol t}_{r}$, and using that ${\boldsymbol t}_{r}\leq {\boldsymbol t}_i \leq 0$ for all ${\boldsymbol t}\in\overline{\mathcal{T}_i}$, we obtain
\begin{align*}
  \frac{\partial}{\partial {\boldsymbol t}_{r}} \Var \left( \bar A_i({\boldsymbol t}-{\boldsymbol t}_i,{\boldsymbol t}) \right) &= \sum\limits_{r'\in\mathcal{P}_1(i)}  \sigma_{r'_1}\sigma_{r_1}\rho_{r_1,r'_1}H \Big[ sign({\boldsymbol t}_{r}-{\boldsymbol t}_{r'}+{\boldsymbol t}_i) |{\boldsymbol t}_{r}-{\boldsymbol t}_{r'}+{\boldsymbol t}_i|^{2H-1} \\
  &\qquad + sign({\boldsymbol t}_{r}-{\boldsymbol t}_{r'}-{\boldsymbol t}_i) |{\boldsymbol t}_{r}-{\boldsymbol t}_{r'}-{\boldsymbol t}_i|^{2H-1} - 2 {\rm sign}({\boldsymbol t}_{r}-{\boldsymbol t}_{r'}) |{\boldsymbol t}_{r}-{\boldsymbol t}_{r'}|^{2H-1} \Big] \Pi_r \Pi_{r'}.
\end{align*}
Moreover, for all ${\boldsymbol t}_{r}\leq \min\{ {\boldsymbol t}_{r'} : r'\in\mathcal{P}_1(i),\,\, r'\neq r \}$, we have
\begin{align}
  \frac{\partial}{\partial {\boldsymbol t}_{r}} \Var \left( \bar A_i({\boldsymbol t}-{\boldsymbol t}_i,{\boldsymbol t}) \right) &= \sum\limits_{r'\in\mathcal{P}_1(i),\, r'\neq r}  \sigma_{r'_1}\sigma_{r_1}\rho_{r_1,r'_1}H \Big[ - ({\boldsymbol t}_{r'}-{\boldsymbol t}_{r}-{\boldsymbol t}_i)^{2H-1} \label{eq:derivative1} \\
  &\qquad + {\rm sign}({\boldsymbol t}_{r}-{\boldsymbol t}_{r'}-{\boldsymbol t}_i) |{\boldsymbol t}_{r}-{\boldsymbol t}_{r'}-{\boldsymbol t}_i|^{2H-1}  + 2 ({\boldsymbol t}_{r'}-{\boldsymbol t}_{r})^{2H-1} \Big] \Pi_r \Pi_{r'}. \nonumber
\end{align}
If ${\boldsymbol t}_{r}-{\boldsymbol t}_{r'}-{\boldsymbol t}_i\leq 0$, we have
\begin{align}
 &- ({\boldsymbol t}_{r'}-{\boldsymbol t}_{r}-{\boldsymbol t}_i)^{2H-1} + {\rm sign}({\boldsymbol t}_{r}-{\boldsymbol t}_{r'}-{\boldsymbol t}_i) |{\boldsymbol t}_{r}-{\boldsymbol t}_{r'}-{\boldsymbol t}_i|^{2H-1}  + 2 ({\boldsymbol t}_{r'}-{\boldsymbol t}_{r})^{2H-1} \nonumber \\
 &\qquad\qquad \qquad\qquad\qquad = - ({\boldsymbol t}_{r'}-{\boldsymbol t}_{r}-{\boldsymbol t}_i)^{2H-1} - ({\boldsymbol t}_{r'}-{\boldsymbol t}_{r}+{\boldsymbol t}_i)^{2H-1}  + 2 ({\boldsymbol t}_{r'}-{\boldsymbol t}_{r})^{2H-1} \geq 0, \label{eq:derivative2}
\end{align}
where in the last inequality we used that $H\geq 1/2$. On the other hand, if ${\boldsymbol t}_{r}-{\boldsymbol t}_{r'}-{\boldsymbol t}_i> 0$, we have
\begin{align}
 &- |{\boldsymbol t}_{r}-{\boldsymbol t}_{r'}+{\boldsymbol t}_i|^{2H-1} + {\rm sign}({\boldsymbol t}_{r}-{\boldsymbol t}_{r'}-{\boldsymbol t}_i) |{\boldsymbol t}_{r}-{\boldsymbol t}_{r'}-{\boldsymbol t}_i|^{2H-1}  + 2 |{\boldsymbol t}_{r}-{\boldsymbol t}_{r'}|^{2H-1} \nonumber \\
 &\qquad\qquad \qquad\qquad\qquad = - ({\boldsymbol t}_{r'}-{\boldsymbol t}_{r}-{\boldsymbol t}_i)^{2H-1} + ({\boldsymbol t}_{r}-{\boldsymbol t}_{r'}-{\boldsymbol t}_i)^{2H-1}  + 2 ({\boldsymbol t}_{r'}-{\boldsymbol t}_{r})^{2H-1} \geq 0, \label{eq:derivative3}
\end{align}
where in the last inequality we used that $H\geq 1/2$. Combining   \eqref{eq:derivative1}, \eqref{eq:derivative2}, and \eqref{eq:derivative3} with   $\rho_{r_1,r'_1}\geq 0$, for all $r,r'\in\mathcal{P}_1(i)$, it follows that $\Var \left( \bar A_i({\boldsymbol t}-{\boldsymbol t}_i,{\boldsymbol t}) \right)$ is maximized when ${\boldsymbol t}_{r}={\boldsymbol t}_i$, for all $r\in\mathcal{P}_2(i)$.
\end{proof}

Lemma \ref{lem:optimizerProperty} implies that we can pick
\begin{equation*}
 {\boldsymbol t}^* \in \underset{{\boldsymbol t}\in\overline{\mathcal{T}_i}}{\arg\min} \left\{ \frac{\Big[b-\big(\mu_i\overline\lambda_i\big){\boldsymbol t}_i\Big]^2}{\Var \left( \bar A_i({\boldsymbol t}-{\boldsymbol t}_i,{\boldsymbol t}) \right)} \right\}
\end{equation*}
such that ${\boldsymbol t}^*_r={\boldsymbol t}^*_i$, for all $r\in\mathcal{P}_2(i)$. In that case, we have
\[ {\boldsymbol t}^*_i \in \underset{{\boldsymbol t}_i\leq 0}{\arg\min} \left\{ \frac{\Big[ b-\big(\mu_i - \overline\lambda_i\big){\boldsymbol t}_i \Big]^2}{\Var \left( \sum\limits_{r\in\mathcal{P}_1(i)} \hat A_{r_1}({\boldsymbol t}_i) \Pi_r \right)} \right\}. \]
An elementary computation yields  that
\begin{equation}\label{eq:tStar}
 {\boldsymbol t}_i^* = -\left( \frac{b}{\mu_i-\overline{\lambda}_i} \right)\left( \frac{H}{1-H} \right).
\end{equation}
Using this, the condition in Lemma~\ref{lem:sufficientCondition} is
\begin{align}
 & \frac{{\mathbb C}{\rm ov}\left( \sum\limits_{r\in\mathcal{P}_2(i)} \hat A_{r_1}({\boldsymbol s}_{r}) \Pi_r ,\,\, \hat A_i({\boldsymbol t}_i^*) + \sum\limits_{r\in\mathcal{P}_2(i)} \hat A_{r_1}({\boldsymbol t}_i^*) \Pi_r \right)}{\Var \left( \hat A_i({\boldsymbol t}_i^*) + \sum\limits_{r\in\mathcal{P}_2(i)} \hat A_{r_1}({\boldsymbol t}_i^*) \Pi_r \right)}\Big[b-\big(\mu_i-\overline{\lambda}_i\big) {\boldsymbol t}_i^*\Big] \label{eq:sufficient1} \\
 &\qquad \qquad\qquad\qquad\qquad\qquad\qquad\qquad\qquad < \sum\limits_{r\in\mathcal{P}_2(i)} \left(\mu_{r_1}-\lambda_{r_1}-\sum\limits_{j\in\mathcal{N}_{\rm in}(r_1)}  \mu_j p_{j,r_1} \right)(-{\boldsymbol s}_{r}) \Pi_r , \nonumber
\end{align}
for all ${\boldsymbol s}\in\mathcal{S}_i({\boldsymbol t}^*)$ such that ${\boldsymbol s}\neq {\boldsymbol t}^*-{\boldsymbol t}^*_i$. Then, since ${\boldsymbol t}^*-{\boldsymbol t}^*_i \notin \mathcal{S}_i({\boldsymbol t}^*)$, a sufficient condition for  \eqref{eq:sufficient1} to hold is that
\begin{align*}
 &\min\left\{  \mu_j - \lambda_j -\sum\limits_{l\in\mathcal{N}_{\rm in}(j)}  \mu_l p_{l,j} : j\neq i \right\} > \\
 &\qquad \qquad \sup\limits_{{\boldsymbol s}\in \mathcal{S}({\boldsymbol t}^*)} \left\{ \frac{{\mathbb C}{\rm ov}\left( \sum\limits_{r\in\mathcal{P}_2(i)} \hat A_{r_1}({\boldsymbol s}_{r}) \Pi_r ,\,\, \hat A_i({\boldsymbol t}_i^*) + \sum\limits_{r\in\mathcal{P}_2(i)} \hat A_{r_1}({\boldsymbol t}_i^*) \Pi_r \right)}{\Var \left( \hat A_i({\boldsymbol t}_i^*) + \sum\limits_{r\in\mathcal{P}_2(i)} \hat A_{r_1}({\boldsymbol t}_i^*) \Pi_r \right) \left( \sum\limits_{r\in\mathcal{P}_2(i)} -{\boldsymbol s}_{r} \Pi_r \right)}\Big[b-\big(\mu_i-\overline{\lambda}_i\big) {\boldsymbol t}_i^*\Big] \right\}.
\end{align*}
Substituting  \eqref{eq:tStar} in the equation above, we obtain, with $b_H\triangleq b/(1-H)$,
\begin{align*}
  &\frac{{\mathbb C}{\rm ov}\left( \sum\limits_{r\in\mathcal{P}_2(i)} \hat A_{r_1}({\boldsymbol s}_{r}) \Pi_r ,\,\, \hat A_i({\boldsymbol t}_i^*) + \sum\limits_{r\in\mathcal{P}_2(i)} \hat A_{r_1}({\boldsymbol t}_i^*) \Pi_r \right)}{\Var \left( \hat A_i({\boldsymbol t}_i^*) + \sum\limits_{r\in\mathcal{P}_2(i)} \hat A_{r_1}({\boldsymbol t}_i^*) \Pi_r \right) \left( \sum\limits_{r\in\mathcal{P}_2(i)} -{\boldsymbol s}_{r} \Pi_r \right)}\Big[b-\big(\mu_i-\overline{\lambda}_i\big) {\boldsymbol t}_i^*\Big] \\
  & =\frac{b_H\cdot \sum\limits_{r\in\mathcal{P}_2(i)} \left[ {\mathbb C}{\rm ov}\left( \hat A_{r_1}({\boldsymbol s}_{r}),\,\, \hat A_i({\boldsymbol t}_i^*) \right) + \sum\limits_{r'\in\mathcal{P}_2(i)} {\mathbb C}{\rm ov}\left( \hat A_{r_1}({\boldsymbol s}_{r}), \hat A_{r'_1}({\boldsymbol t}_i^*) \right)\Pi_{r'}\right] \Pi_r}{\left(\Var \left( \hat A_i({\boldsymbol t}_i^*) \right) + \sum\limits_{r\in\mathcal{P}_2(i)} \left[ 2 {\mathbb C}{\rm ov}\left( \hat A_i({\boldsymbol t}_i^*),\,\, \hat A_{r_1}({\boldsymbol t}_i^*) \right) + \sum\limits_{r'\in\mathcal{P}_2(i)} {\mathbb C}{\rm ov}\left( \hat A_{r_1}({\boldsymbol t}_i^*),\,\, \hat A_{r'_1}({\boldsymbol t}_i^*) \right) \Pi_{r'} \right] \Pi_r  \right) \left( \sum\limits_{r\in\mathcal{P}_2(i)} -{\boldsymbol s}_{r} \Pi_r \right)}  \\
  & =\frac{ b_H\cdot\sum\limits_{r\in\mathcal{P}_2(i)} \left( \sigma_{r_1}\sigma_i\rho_{r_1,i} + \sum\limits_{r'\in\mathcal{P}_2(i)} \sigma_{r_1}\sigma_{r'_1}\rho_{r_1,r'_1} \Pi_{r'}\right) \Pi_r \Big(|{\boldsymbol s}_{r}|^{2H}+|{\boldsymbol t}_i^*|^{2H}-|{\boldsymbol t}_i^*-{\boldsymbol s}_{r}|^{2H}\Big)}{2|{\boldsymbol t}_i^*|^{2H} \left[ \sigma_i^2 + \sum\limits_{r\in\mathcal{P}_2(i)} \left( 2 \sigma_{r_1}\sigma_i\rho_{r_1,i} + \sum\limits_{r'\in\mathcal{P}_2(i)} \sigma_{r_1}\sigma_{r'_1}\rho_{r_1,r'_1} \Pi_{r'}\right)\Pi_r  \right] \left( \sum\limits_{r\in\mathcal{P}_2(i)} -{\boldsymbol s}_{r} \Pi_r \right)}  \\
  & =\frac{ \sum\limits_{r\in\mathcal{P}_2(i)} \left( \sigma_{r_1}\sigma_i\rho_{r_1,i} + \sum\limits_{r'\in\mathcal{P}_2(i)} \sigma_{r_1}\sigma_{r'_1}\rho_{r_1,r'_1} \Pi_{r'}\right) \Pi_r \left(\left|\frac{{\boldsymbol s}_{r}}{{\boldsymbol t}_i^*}\right|^{2H}+ 1 - \left| 1- \frac{{\boldsymbol s}_{r}}{{\boldsymbol t}_i^*}\right|^{2H}\right)}{2 \left[ \sigma_i^2 + \sum\limits_{r\in\mathcal{P}_2(i)} \left( 2 \sigma_{r_1}\sigma_i\rho_{r_1,i} + \sum\limits_{r'\in\mathcal{P}_2(i)} \sigma_{r_1}\sigma_{r'_1}\rho_{r_1,r'_1} \Pi_{r'}\right)\Pi_r  \right] \left( \sum\limits_{r\in\mathcal{P}_2(i)} \frac{{\boldsymbol s}_{r}}{{\boldsymbol t}_i^*} \Pi_r \right)} \left(\frac{\mu_i-\overline{\lambda}_i}{H}\right).
\end{align*}
Then, a sufficient condition for   \eqref{eq:sufficient1} to hold is that
\begin{align*}
 &\min\left\{  \mu_j - \lambda_j -\sum\limits_{l\in\mathcal{N}_{\rm in}(j)}  \mu_l p_{l,j} : j\neq i \right\} > \\
 &\qquad\  \sup\limits_{\alpha\in (0,1)^{|\mathcal{P}_2(i)|}} \left\{ \frac{ \sum\limits_{r\in\mathcal{P}_2(i)} \left( \sigma_{r_1}\sigma_i\rho_{r_1,i} + \sum\limits_{r'\in\mathcal{P}_2(i)} \sigma_{r_1}\sigma_{r'_1}\rho_{r_1,r'_1} \Pi_{r'}\right) \Pi_r \left(\left(\alpha_r\right)^{2H}+ 1 - \left( 1- \alpha_r\right)^{2H}\right)}{ \left[ \sigma_i^2 + \sum\limits_{r\in\mathcal{P}_2(i)} \left( 2 \sigma_{r_1}\sigma_i\rho_{r_1,i} + \sum\limits_{r'\in\mathcal{P}_2(i)} \sigma_{r_1}\sigma_{r'_1}\rho_{r_1,r'_1} \Pi_{r'}\right)\Pi_r  \right] \left( \sum\limits_{r\in\mathcal{P}_2(i)} \alpha_r \Pi_r \right)} \left(\frac{\mu_i-\overline{\lambda}_i}{2H}\right) \right\}.
\end{align*}
Lemma \ref{lem:sufficientCondition} and Theorem \ref{thm:acyclicLowerLD1} finish the proof.

\bibliographystyle{unsrt}
\bibliography{references}

\end{document}